\DeclareMathOperator{\cb}{cb}
\DeclareMathOperator{\CB}{\mathcal{CB}}
\DeclareMathOperator{\tr}{tr}
\DeclareMathOperator{\proj}{proj}
\DeclareMathOperator{\ex}{ex}
\DeclareMathOperator{\amconv}{amconv}
\DeclareMathOperator{\fin}{fin}
\newcommand{\n}[1]{ \left\|#1\right\| }
\newcommand{\N}{{\mathbb{N}}}
\newcommand{\C}{{\mathbb{C}}}
\newcommand{\pair}[2]{{\langle #1, #2 \rangle}}
\newcommand{\mpair}[2]{{\langle\langle #1, #2 \rangle\rangle}}
\newcommand\restr[2]{{% we make the whole thing an ordinary symbol
  \left.\kern-\nulldelimiterspace % automatically resize the bar with \right
  #1 % the function
  \vphantom{\big|} % pretend it's a little taller at normal size
  \right|_{#2} % this is the delimiter
  }}
\newtheorem{theorem}{Theorem}[section]
\newtheorem{lemma}[theorem]{Lemma}
\newtheorem{definition}[theorem]{Definition}
\newtheorem{corollary}[theorem]{Corollary}
\newtheorem{proposition}[theorem]{Proposition}
\newtheorem{remark}[theorem]{Remark}
\newtheorem{example}[theorem]{Example}
\numberwithin{equation}{section}
\def\ker{{\rm ker\, }}
\begin{document}

\title{Revisiting Operator $p$-Compact Mappings}

\author[J.A. Ch\'avez-Dom\'inguez]{Javier Alejandro Ch\'avez-Dom\'inguez}
\address{Department of Mathematics, University of Oklahoma, Norman, OK 73019-3103,
USA} \email{jachavezd@ou.edu}

\author[V. Dimant]{Ver\'onica Dimant}
\address{Departamento de Matem\'{a}tica y Ciencias, Universidad de San
Andr\'{e}s, Vito Dumas 284, (B1644BID) Victoria, Buenos Aires,
Argentina and CONICET} \email{vero@udesa.edu.ar}

\author[D. Galicer]{Daniel Galicer}
\address{Departamento de Matem\'{a}ticas y Estad\'{\i}stica, Universidad T. Di Tella, Av. Figueroa Alcorta 7350 (1428), Buenos Aires, Argentina and CONICET.
On leave from Departamento de Matem\'{a}tica, Facultad de Ciencias Exactas y Naturales, Universidad de Buenos Aires, (1428) Buenos Aires,
Argentina} \email{daniel.galicer@utdt.edu}
\date{}

\thanks{The first-named author was partially supported by NSF grants DMS-1900985 and DMS-2247374. The second-named author was partially supported by CONICET PIP 11220200101609CO and ANPCyT PICT 2018-04104. The third-named author was partially supported by CONICET-PIP 11220200102366CO
 and ANPCyT PICT 2018-4250.}

\begin{abstract}
We continue our study of the mapping ideal of operator $p$-compact maps, previously introduced by the authors. Our approach embraces a more geometric perspective, delving into the interplay between operator $p$-compact mappings and matrix sets, specifically we provide a quantitative notion of operator $p$-compactness for the latter.
In particular, we consider operator $p$-compactness in the bidual and its relation with this property in the original space.
Also, we deepen our understanding of the connections between these mapping ideals and other significant ones (e.g., completely $p$-summing, completely $p$-nuclear).
\end{abstract}

\maketitle

\section{Introduction}

To establish the metric theory of tensor products, Grothendieck provided a characterization of compactness in Banach spaces with independent significance \cite[Chap. I, p. 112]{grothendieck1955produits}. According to his formulation, relatively compact sets are precisely those which are contained within the absolutely convex hull of a null sequence. That is, a subset \(K\) of a Banach space \(X\) is relatively compact if and only if there exists a null sequence \((x_n)_{n \in \mathbb{N}}\) in $c_0(X)$ such that

\begin{equation}\label{groth compact definition}
K\subseteq \bigg\{\sum_{n=1}^{\infty} \alpha_n x_n \colon \sum_{n=1}^{\infty} \lvert \alpha_n \rvert \leq 1\bigg\} \quad.
\end{equation}

Inspired by Grothendieck's result Sinha and Karn \cite{Sinha-Karn} introduced a stronger notion of comptactness, called relatively \(p\)-compact sets. These are sets determined in a manner similar to \eqref{groth compact definition}, but restricting to  \(p\)-summable sequences rather than null sequences. More precisely, for \(1 \leq p < \infty\) and \(\frac{1}{p} + \frac{1}{p'} = 1\), a subset \(K \subset X\) is said to be \emph{relatively \(p\)-compact} if there exists a sequence $(x_n)_n \in \ell_p(X)$ such that
\begin{equation}\label{p-compact}
K\subseteq \bigg\{\sum_{n=1}^{\infty} \alpha_n x_n \colon \sum_{n=1}^{\infty} \lvert \alpha_n \rvert^{p'} \leq 1\bigg\}.
\end{equation}

In the limiting case \(p = 1\), the definition is modified as usual. Consequently, classical compact sets can be viewed as ``infinite-compact''. Furthermore, a monotonicity relation holds: if \(1 \leq q \leq p \leq \infty\), any relatively \(q\)-compact set is also relatively \(p\)-compact. Thus, \(p\)-compactness reveals more intricate structures on compact sets.
In analogy with compact linear maps, Sinha and Karn \cite{Sinha-Karn} also defined $p$-compact maps as those which map the closed unit ball of the domain into a relatively $p$-compact subset of the codomain.
Since their introduction there has been great interest in this class of maps, together with some other closely related notions, from a number of different perspectives including
the theories of operator ideals and tensor norms,   various associated approximation properties, structural properties of sets and sequences and  infinite dimensional complex analysis (see, e. g. \cite{MR2651671,Choi-Kim,MR2967309, Galicer-Lassalle-Turco, MR3091821, Pietsch-p-compact, MR3394623, Aron-Caliskan-Garcia-Maestre, MR3720927} and the references therein).

In previous work \cite{ChaDiGa-Operator-p-compact} we defined corresponding notions for operator spaces, both operator $p$-compact mappings and operator $p$-compact (matrix) sets. We mostly approached the issue from the point of view of tensor norms and operator ideals, in the spirit of \cite{Galicer-Lassalle-Turco}, studying mainly the mappings and only briefly touching on the sets. In this companion work we continue our study with a greater emphasis on the sets, based on a geometric interpretation of \eqref{p-compact}.
Specifically, for $x=(x_n)_{n \in \mathbb{N}} \in \ell_p(X)$ denote by $\Theta^x : \ell_{p'} \to X$ the multiplication mapping given by $\alpha \mapsto \sum_{n=1}^{\infty} \alpha_n x_n$, which is well-defined thanks to H\"older's inequality. Note  that then $K \subset X$ is relatively $p-$compact if and only if $K$ is included in the image $\Theta^x(B_{\ell_{p'}})$ for some $x=(x_n)_{n \in \mathbb{N}} \in \ell_p(X)$ (compare with \eqref{p-compact}).
To state the corresponding definition from \cite{ChaDiGa-Operator-p-compact} in the context of operator spaces, first recall that a \emph{matrix set $\mathbf{K} = (K_n)_n$ over an operator space $V$} is a sequence of subsets $K_n \subseteq M_n(V)$ for each $n\in\N$. A typical example of a matrix set over $V$ is the \emph{closed matrix unit ball of $V$} given by
$\mathbf{B}_V = \big( B_{M_n(V)} \big)_n$. For a linear map $T : V \to W$ between operator spaces, the expression $T(\mathbf{K})$ denotes the matrix set $\big(T_n(K_n)\big)_n$ where $T_n$ is the $n$-th amplification of $T$. For two matrix sets $\mathbf{K} = (K_n)_n$ and $\mathbf{L} = (L_n)_n$ defined over the same operator space $V$, we denote $\mathbf{K} \subseteq \mathbf{L}$ to signify that $K_n \subseteq L_n$ holds for all $n \in \mathbb{N}$.

Note that the language of matrix sets allow us to more transparently see the analogy between bounded and completely bounded linear maps. A linear map $T : X \to Y$ between Banach spaces is bounded with norm at most $C$ if and only if $T(B_X) \subseteq C B_Y$, whereas a linear map $T : V \to W$ between operator spaces is completely bounded with completely bounded norm at most $C$ if and only if $T(\mathbf{B}_V) \subseteq C \mathbf{B}_W$.
In operator space theory the Schatten $p$-class $\mathcal{S}_p$ often plays the role of a noncommutative version of the space $\ell_p$, see for example Pisier's notion of a completely $p$-summing map \cite{Pisier-Asterisque}, which provides the final ingredient needed for an operator space version of $p$-compactness: we say that a matrix set $\mathbf{K}$ over an operator space $V$ is operator $p$-compact if there exists $v=(v_{ij})_{i,j=1}^\infty$ in the $V$-valued Schatten space $\mathcal{S}_p[V]$ such that $\mathbf{K}\subset \Theta^v(\mathbf{B}_{\mathcal{S}_p'})$, where $\Theta^v: \mathcal{S}_p' \to V$ is the mapping given by $(\alpha_{ij}) \mapsto \sum_{i,j=1}^{\infty} \alpha_{ij} v_{ij}$ (see Section \ref{sec-sets-and-mappings} for the technical details).
The most significant difference between our previous approach \cite{ChaDiGa-Operator-p-compact} and the present paper is the introduction of a quantitative measure of $p$-compactness for the relatively operator $p$-compact matrix sets analogous to the classical one (Definition \ref{def-measure-of-p-compactness}). 

We will now describe the contents of the rest of the paper. Section \ref{sec-preliminaries} introduces notation and preliminaries.
In Section \ref{sec-sets-and-mappings}, we make precise the definitions referenced in the previous paragraph, and show some basic properties of operator $p$-compact matrix sets and mappings, mostly mirroring the Banach space case. These include the aforementioned monotonicity (Proposition \ref{prop-monotonicity-compactness}) and a factorization theorem (Theorem \ref{thm-factorization-Choi-Kim}).
We recall some elementary examples of operator $p$-compact mappings, and compare our notion with alternative concepts of compactness in the operator space setting \cite{Webster,Yew}.
In particular, we show that this circle of ideas provides an answer to a question of Webster which we previously emphasized in \cite[Problem 4.3]{ChaDiGa-Operator-p-compact}.

In Section \ref{sec-adjoints}, we study the adjoints of operator $p$-compact mappings. Just as in the Banach space case, there is a close connection with the right $p$-nuclear maps. However, the results are not completely analogous because of the usual complications when moving to the noncommutative setting (for example, the lack of local reflexivity). 
Nevertheless, we are able to calculate the operator $p$-compact norm of other mappings beyond the aforementioned elementary examples (Corollary \ref{cor-examples-p-compact}) and also show that the monotonicity relation for operator $p$-compactness is strict (Corollary \ref{cor: p-not-q}).

Next, in Section \ref{sect-weakly-p-compact} we define and briefly explore the operator weakly $p$-compact matrix sets and corresponding mappings. The latter enjoy factorization properties (Theorem \ref{thm-factorization-weakly-p-compact}, Proposition \ref{prop-Wp-is-Gamma-p-sur}) similar to the ones already known for the operator $p$-compact ones \cite{ChaDiGa-Operator-p-compact}.
Moreover, in Section \ref{sect-completely-summing} we show that this class is intimately related to Pisier's completely $p$-summing maps: the latter send relatively weakly $p$-compact matrix sets to relatively operator $p$-compact matrix sets.
In the Banach case, $p$-summing maps are characterized as those which map relatively compact sets to relatively $p$-compact sets. In the noncommutative setting, while one implication always holds for the other one we need a technical condition which is dual to exactness (See Theorem \ref{thm-adjoint-of-p-summing}).

Section \ref{sec-regularity} studies the question of regularity for the mapping ideal of operator $p$-compact maps, that is, the question of whether a mapping $T : V \to W$ is operator $p$-compact if and only if so is its composition with the canonical embedding $W \to W''$. While the answer is positive for the classical $p$-compact mappings on Banach spaces, in the noncommutative setting we can only prove an analogous result under the additional hypotheses of having $N$-maximal domain and locally reflexive codomain (Corollary \ref{cor-compactness-in-the-bidual-for-operators}).

Just from the definition, every operator $p$-compact mapping is naturally associated to a relatively $p$-compact matrix set. In the last section we prove the reverse: given a relatively operator $p$-compact matrix set, can we associate to it an operator $p$-compact mapping (Theorem \ref{thm:equivalences-p-compact-u}). In the classical situation, there is an additional equivalence with the $p$-nuclearity of the adjoint of the map associated to a $p$-compact set. In order to obtain said equivalence in the noncommutative setting (Theorem \ref{thm:equivalences-p-compact-j}), we need technical conditions closely related to the ones in the previous paragraph.

We remark that in the classical setting, the analogues of the properties examined herein serve as the foundation for investigating an associated approximation property, the $p$-approximation property introduced in \cite{Sinha-Karn}. This paper aims to analogously  provide tools which we will use to develop corresponding approximation properties within the non-commutative framework, in the parallel manuscript \cite{CDDG-pOAP}.

\section{Notation and preliminaries}\label{sec-preliminaries}

We only assume familiarity with the basic theory of operator spaces; the books \cite{Pisier-OS-theory} and \cite{Effros-Ruan-book} are excellent references. Our notation follows closely that from \cite{Pisier-Asterisque, Pisier-OS-theory}, with one notable exception: we denote the dual of a space $V$ by $V'$.

Throughout the article, $V$ and $W$ denote operator spaces. For each $n$, $M_n(V)$ represents the space of $n\times n$ matrices with elements from $V$. We denote the $n$-amplification of a linear mapping $T:V\to W$ as $T_n:M_n(V)\to M_n(W)$. The space of completely bounded linear mappings from $V$ to $W$ is denoted by $\CB(V,W)$, with the subspace of finite-rank mappings represented by $\mathcal{F}(V,W)$. Our notation for the minimal and projective operator space tensor products is respectively $\otimes_{\min}$ and $\otimes_{\proj}$. The canonical completely isometric embedding into the bidual is denoted by $\iota_V : V \to V''$.

A linear map $Q : V \to W$ between operator spaces is called a complete 1-quotient  if it is onto and the associated map from $V/\ker(Q)$ to $W$ is a completely isometric isomorphism.
In \cite[Sec. 2.4]{Pisier-OS-theory}, these maps are referred to as complete metric surjections. It is proven therein that a linear map $T:V \to W$ is a complete 1-quotient if and only if its adjoint $T' : W' \to V'$ is a completely isometric embedding.

 We say that a matrix set $\mathbf{K}$ is open (resp. closed) whenever each $K_n$ is open (resp. closed). By the closure of a matrix set $\mathbf{K} = (K_n)_n$ we mean taking the closure on each level: $\overline{\mathbf{K}} = ( \overline{K_n})_n$. 

We say that matrix set $\mathbf{K}$ over $V$ is \emph{completely bounded} if there exists a constant $C>0$ such that $\mathbf{K} \subseteq C \mathbf{B}_V$. 

To simplify certain statements, whenever $\mathbf{K} = (K_n)_n$ is a matrix set, we will use the shorthand ``$x \in \mathbf{K}$'' to mean ``there exists $n \in \mathbb{N}$ such that $x \in K_n$'' . Also, for $T\in\CB(V,W)$, the expression $T(\mathbf{K})$ denotes the matrix set $\big(T_n(K_n)\big)_n$.

The $\ell_p$ spaces are essential in defining and studying $p$-compactness in Banach spaces. The noncommutative analog of $\ell_p$ is the Schatten class $\mathcal{S}_p$. For $1 \le p < \infty$, $\mathcal{S}_p$ comprises all compact mappings $T$ on $\ell_2$ such that $\tr |T|^p < \infty$, equipped with the norm $\n{T}_{\mathcal{S}_p} = \big( \tr |T|^p \big)^{1/p}$. For $p=\infty$, $\mathcal{S}_\infty$ denotes the space of all compact mappings on $\ell_2$ with the operator norm.
The analogy  is shown by identifying $\ell_p$  with the diagonal mappings in $\mathcal{S}_p$,  noting that any two diagonal mappings commute.
The operator space structure on $\mathcal{S}_p$ is provided through Pisier's theory of complex interpolation for operator spaces \cite[Sec. 2]{Pisier-interpolation},  \cite[Sec. 2.7]{Pisier-OS-theory}. Note that
 $\mathcal{S}_\infty$ has a canonical operator space structure as it is a $C^*$-algebra  \cite[p. 21]{Effros-Ruan-book}. Also, the space $\mathcal{S}_1 = \mathcal{S}_\infty'$ naturally inherits an operator space structure through duality (as usual, the duality pairing is defined by $\langle a, b \rangle := \tr(a^t b)$). Using this, it is possible to endow each intermediate space $\mathcal{S}_p$ with a well-defined operator space structure.
As remarked in \cite[p. 141]{Pisier-OS-theory}, this abstract approach realizes $\mathcal{S}_p$ as a subspace of a $\mathcal B(H)$ space in a highly nonstandard way.
More generally, an operator space $V$ yields a $V$-valued version of $\mathcal{S}_p$, denoted by $\mathcal{S}_p[V]$:
$\mathcal{S}_\infty[V]$ is the minimal operator space tensor product of $\mathcal{S}_\infty$ and $V$,
$\mathcal{S}_1[V]$ is the operator space projective tensor product of $\mathcal{S}_1$ and $V$,
and once again in the case $1 < p < \infty$ we define $\mathcal{S}_p[V]$ via complex interpolation between $\mathcal{S}_\infty[V]$ and $\mathcal{S}_1[V]$  \cite{Pisier-Asterisque}.
For $1 < p \le \infty$, the dual of $\mathcal{S}_p[V]$ can be canonically identified with $\mathcal{S}_{p'}[V']$, where $p'$ satisfies $1/p +  1/p' = 1$ \cite[Cor. 1.8]{Pisier-Asterisque}.
In the discussion above, if we replace $\mathcal{S}_1$ by the space $\mathcal{S}_1^n$ of $n\times n$ matrices with the trace norm, and $\mathcal{S}_\infty$ by the space $M_n$, we can analogously construct operator spaces $\mathcal{S}_p^n$ and $\mathcal{S}_p^n[V]$.
We will often consider the elements of the spaces $\mathcal{S}_p \widehat\otimes_{\min} V$ and $\mathcal{S}_p[V]$ as infinite matrices with entries in $V$.
In the first case the meaning is clear: since $\mathcal{S}_p \widehat\otimes_{\min} V$ completely isometrically embeds into $\CB(\mathcal{S}_p',V)$, we identify $v \in \mathcal{S}_p \widehat\otimes_{\min} V$ with the infinite $V$-valued matrix that arises from applying $v$ (considered as a map $\mathcal{S}_p' \to V$) to the matrix units in $\mathcal{S}_p'$.
For $v \in \mathcal{S}_p[V]$ the identification as an infinite matrix is not immediately clear, since $\mathcal{S}_p[V]$ was constructed using complex interpolation.
The reader is invited to check \cite[pp. 18--20]{Pisier-Asterisque} for further details.

We use $M(a,b)$ to denote the two-sided multiplication mapping $x \mapsto axb$.

An operator space $V$ is said to be \emph{locally reflexive} if for any finite-dimensional operator space $W$, every complete contraction $T : W \to V''$ is the point-weak$^*$ limit of a net of complete contractions $T_\alpha : W \to V$.
The operator space $V$ is said to be \emph{strongly locally reflexive} if given finite-dimensional subspaces $F\subseteq V''$ and $N \subseteq V'$, and $\varepsilon>0$, there exists a complete isomorphism $T : F \to E \subseteq V$ such that
(a) $\n{T}_{\cb},\n{T^{-1}}_{\cb} < 1+\varepsilon$,
(b) $\pair{Tv}{v'} = \pair{v}{v'}$ for all $v \in F$ and $v' \in N$,
(c) $Tv=v$ for all $v\in F \cap V$.

The operator space structure of an $\ell_\infty$-sum of operator spaces $\ell_\infty(\{V_i\}_{i \in  I})$ is given by the identification $M_n(\ell_\infty(\{V_i\}_{i \in  I})) = \ell_\infty(\{M_n(V_i)\}_{i \in  I})$.
The operator space structure of an $\ell_1$-sum of operator spaces $\ell_1(\{V_i\}_{i \in  I})$ is characterized by the following universal property: for any operator space $W$ and any linear map $T : \ell_1(\{V_i\}_{i \in  I}) \to W$, $\n{T}_{\cb} \le 1$ if and only if for all $i\in I$ we have $\n{TJ_i}_{\cb} \le 1$, where $J_i : V_i \to \ell_1(\{V_j\}_{j \in  I})$ is the canonical injection (that is, the map sending $v \in V_i$ to the vector having $v$ in the $i$-th position and 0 everywhere else).
In the case where $V_i = W$ for all $i\in I$, we use the shorthands
$\ell_\infty(I ; W) = \ell_\infty(\{V_i\}_{i \in  I})$ and
$\ell_1(I ; W) = \ell_1(\{V_i\}_{i \in  I})$.

A \emph{mapping ideal} $(\mathfrak{A},\|\cdot\|_{\mathfrak{A}})$ is an assignment, for each pair of operator spaces $V,W$, of a linear space $\mathfrak{A}(V,W) \subseteq \CB(V,W)$  together with an operator space structure $\|\cdot\|_{\mathfrak{A}}$ on $\mathfrak{A}(V,W)$ such that
\begin{enumerate}[(a)]
\item The identity map $\mathfrak{A}(V,W) \to \CB(V,W)$ is a complete contraction.
\item For every $v'\in M_n(V')$ and $w\in M_m(W)$ the mapping $v'\otimes w$ belongs to $M_{nm}(\mathfrak{A}(V,W))$ and $\|v'\otimes w\|_{\mathfrak{A}}=\|v'\|_{M_n(V')} \|w\|_{M_m(W)}$.
\item The ideal property: whenever $T \in M_n(\mathfrak{A}(V,W))$, $R \in \CB(V_0,V)$ and $S \in \CB(W,W_0)$, it follows that $S_n \circ T \circ R$ belongs to $ M_n(\mathfrak{A}(V_0,W_0))$ with
$$
\| S_n \circ T \circ R \|_{\mathfrak{A}} \le \n{S}_{\cb} \|T\|_{\mathfrak{A}} \n{R}_{\cb}.
$$
\end{enumerate}
Note that this is the definition of \cite[Def. 7.1]{ChaDiGa-tensor-norms}, which is stronger than that of \cite[Sec. 12.2]{Effros-Ruan-book} (because of the item (b)).
All of the mapping ideals considered in the present paper have been checked to satisfy this stronger definition, see \cite[Sec. 7]{ChaDiGa-tensor-norms}.

Recall that an operator space $V$ is said to have the \emph{Completely Metric Approximation Property (CMAP)} if there exists a net of finite rank complete contractions in $\CB(V,V)$ that converges pointwise to the identity of $V$.

The following result will be useful later.

\begin{lemma}\label{lemma-ell_infty-has-CMAP}
Let $N\in \N$ be such that for every $i\in I$, $n_i \in \N$ satisfies $n_i \le N$. 
Then $\ell_\infty(\{M_{n_i}\}_{i \in I})$ has CMAP.
\end{lemma}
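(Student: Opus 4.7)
The plan is to reduce to showing that $\ell_\infty(J)$ has CMAP for an arbitrary set $J$, exploiting that the uniform bound $n_i \le N$ limits the possible sizes of the matrix blocks to a finite set. First, partition $I$ by the value of $n_i$: setting $I_k := \{i \in I : n_i = k\}$ for $k = 1, \ldots, N$, one obtains a completely isometric identification
$$\ell_\infty\bigl(\{M_{n_i}\}_{i \in I}\bigr) = \bigoplus_{k=1}^N \ell_\infty(I_k ; M_k),$$
a \emph{finite} $\ell_\infty$-direct sum, for which CMAP is inherited from the summands by taking the block-diagonal of the approximating nets. Since $M_k$ is finite-dimensional, $\ell_\infty(I_k; M_k) = \ell_\infty(I_k) \otimes_{\min} M_k$ completely isometrically; tensoring any net on $\ell_\infty(I_k)$ with $\mathrm{id}_{M_k}$ then transports CMAP to the tensor product, reducing the problem to showing that $\ell_\infty(J)$ itself has CMAP.

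For this last step, I would use conditional expectations associated to finite partitions. Let $\Pi$ be the directed set of pairs $(P,\sigma)$, where $P = \{A_1, \ldots, A_m\}$ is a finite partition of $J$ into nonempty blocks and $\sigma_j \in A_j$ is a chosen representative, ordered by refinement of $P$ (any two finite partitions admit a common refinement, so $\Pi$ is directed). Define $E_{P,\sigma} : \ell_\infty(J) \to \ell_\infty(J)$ by $(E_{P,\sigma} f)(i) = f(\sigma_{j(i)})$, where $j(i)$ is the unique index with $i \in A_{j(i)}$. This factors as the composition of two unital $\ast$-homomorphisms of commutative $C^\ast$-algebras, namely restriction to the representative set followed by pullback through $j : J \to \{1, \ldots, m\}$, and so is completely contractive; it obviously has rank $|P|$. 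For pointwise convergence, given $f \in \ell_\infty(J)$ and $\varepsilon > 0$, cover the bounded set $f(J)$ in the scalar field by finitely many subsets of diameter less than $\varepsilon$ and pull them back through $f$ to obtain a partition $P_\varepsilon$. For any $(P', \sigma')$ with $P'$ refining $P_\varepsilon$, each $P'$-block lies inside some $P_\varepsilon$-block on which $f$ oscillates by less than $\varepsilon$, yielding $\|E_{P',\sigma'} f - f\|_\infty \le \varepsilon$. Hence $E_{P,\sigma} \to \mathrm{id}_{\ell_\infty(J)}$ pointwise.

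No step should present a genuine obstacle: the argument is a routine assembly of standard ingredients (finite $\ell_\infty$-sums and minimal tensor products preserve CMAP, $\ast$-homomorphisms between $C^\ast$-algebras are completely contractive, and simple functions are norm-dense in $\ell_\infty(J)$). The only essential use of the hypothesis $n_i \le N$ is in the opening reduction, where it guarantees a \emph{finite} direct-sum decomposition; without the uniform bound, the same grouping by $n_i$ would produce an infinite $\ell_\infty$-sum, whose CMAP would not follow automatically from CMAP of the summands.
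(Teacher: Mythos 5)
Your proof is correct, and every step (the finite direct-sum decomposition, the passage through $\ell_\infty(I_k)\otimes_{\min}M_k$, and the net of partition-based conditional expectations) holds up. The route differs from the paper's in two identifiable ways. First, to handle the varying block sizes the paper does not decompose the space as a finite sum; instead it observes that $\ell_\infty(\{M_{n_i}\}_{i\in I})$ is completely contractively complemented in $\ell_\infty(I;M_N)$ (embed each $M_{n_i}$ in a corner of $M_N$ and compress back), so CMAP is inherited from the single space $\ell_\infty(I)\widehat\otimes_{\min}M_N$; both devices use the uniform bound $n_i\le N$ in an essential way, and yours trades a complementation argument for a (slightly longer but equally standard) stability-under-finite-$\ell_\infty$-sums argument. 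Second, for the commutative core the paper simply cites the classical fact that $\ell_\infty(I)$ has the MAP and upgrades the approximating contractions to complete contractions via minimality of $\ell_\infty(I)$, whereas you construct the net explicitly as compositions of $*$-homomorphisms attached to finite partitions and verify convergence by hand. Your version is more self-contained and makes the complete contractivity transparent without invoking minimality; the paper's is shorter because it outsources the Banach-space input. Two cosmetic remarks: the preimages of your $\varepsilon$-cover of $f(J)$ should be disjointified to literally form a partition (harmless), and your refinement order on pairs $(P,\sigma)$ is only a preorder since it ignores the representatives, which is all that is needed for a net.
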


\begin{proof}
Since $\ell_\infty(I)$ has the MAP there exists a net of finite rank contractions converging pointwise to the identity. Using the minimality of $\ell_\infty(I)$ these operators are in fact complete contractions. Thus, by tensorizing with the identity of $M_N$ we obtain the CMAP for $\ell_\infty(I) \widehat\otimes_{\min} M_N = \ell_\infty(I; M_N)$.
Now, $\ell_\infty(\{M_{n_i}\}_{i \in I})$ is completely contractively complemented in $\ell_\infty(I; M_N)$, so it also has CMAP.
\end{proof}

\section{Operator $p$-compact sets and mappings}\label{sec-sets-and-mappings}

In this section, we develop a ``geometric" perspective on the concept of $p$-compactness for matrix sets and introduce a quantitative measure of this notion. Building on this, we establish a relationship between the norm of the ideal in terms of the measure.

In \cite[Def. 3.9]{ChaDiGa-Operator-p-compact} we define the notion of $p$-compactness for a matrix set. Now we will provide a different perspective which is equivalent to the previous one.

\begin{definition}\label{defn-operator-p-compact}
   A matrix set $\mathbf{K}=(K_n)$ over $V$ is \emph{relatively operator $p$-compact} if there exists $v \in \mathcal{S}_p[V]$ such that $\mathbf{K}\subset \Theta^v(\mathbf{B}_{\mathcal{S}_p'})$, where $\Theta^v: \mathcal{S}_p' \to V$ is defined by $(\alpha_{ij}) \mapsto \sum_{i,j=1}^{\infty} \alpha_{ij} v_{ij}$. 
   \end{definition}

It might not be immediately apparent that the maps $\Theta^v : \mathcal{S}_p' \to V$ associated to $v\in \mathcal{S}_p[V]$ appearing in Definition \ref{defn-operator-p-compact} above are even well-defined. Let us take a moment to confirm that they make sense.

\begin{lemma}\label{lemma-Theta-well-defined}
The formal identity $\mathcal{S}_p[V] \to \mathcal{S}_p\widehat\otimes_{\min} V \hookrightarrow \CB(\mathcal{S}_p',V)$ is a completely contractive injection. Moreover, the image of $v \in \mathcal{S}_p[V]$ is the map $\Theta^v : \mathcal{S}_p' \to V$ given by $(\alpha_{ij}) \mapsto \sum_{i,j=1}^{\infty} \alpha_{ij} v_{ij}$.
\end{lemma}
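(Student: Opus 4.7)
The plan is to verify the statement at the endpoints $p = \infty$ and $p = 1$ and then interpolate using Pisier's framework. Recall that $\mathcal{S}_p[V]$ is defined as a complex interpolation space between $\mathcal{S}_\infty[V] = \mathcal{S}_\infty \widehat\otimes_{\min} V$ and $\mathcal{S}_1[V] = \mathcal{S}_1 \widehat\otimes_{\proj} V$, and that $\mathcal{S}_p' = \mathcal{S}_{p'}$ with $1/p + 1/p' = 1$.

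At $p = \infty$, the formal identity $\mathcal{S}_\infty[V] \to \mathcal{S}_\infty \widehat\otimes_{\min} V$ is literally the identity, and the canonical embedding $\mathcal{S}_\infty \widehat\otimes_{\min} V \hookrightarrow \CB(\mathcal{S}_1, V)$ is a completely isometric injection (since $\mathcal{S}_1 = \mathcal{S}_\infty'$). At $p = 1$, the formal identity $\mathcal{S}_1 \widehat\otimes_{\proj} V \to \mathcal{S}_1 \widehat\otimes_{\min} V$ is a complete contraction because the projective operator space tensor norm dominates the minimal one, and $\mathcal{S}_1 \widehat\otimes_{\min} V \hookrightarrow \CB(\mathcal{S}_\infty, V)$ is again standard. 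In both endpoint cases, a direct computation on elementary tensors $e_{ij} \otimes x$ confirms that the composition sends $v$ to the map $\Theta^v$ described in the lemma.

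For general $1 < p < \infty$, I would apply Pisier's bilinear complex interpolation theorem to the bilinear map $B : \mathcal{S}_p[V] \times \mathcal{S}_p' \to V$ defined on finite matrices by $B(v, \alpha) = \sum_{i,j} \alpha_{ij} v_{ij}$. Since $B$ is jointly completely contractive at both interpolation endpoints by the previous step, it extends to a jointly completely contractive pairing at level $p$, yielding a complete contraction $\mathcal{S}_p[V] \to \CB(\mathcal{S}_p', V)$. Its image lies in the closed subspace $\mathcal{S}_p \widehat\otimes_{\min} V$: for finite matrices this is clear, and the general case follows from density of finite matrices in $\mathcal{S}_p[V]$ together with closedness of $\mathcal{S}_p \widehat\otimes_{\min} V$ inside $\CB(\mathcal{S}_p', V)$. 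Injectivity is then immediate, since $\Theta^v(e_{ij}) = v_{ij}$ recovers all matrix entries of $v$, and these determine $v$ uniquely (a standard fact about $\mathcal{S}_p[V]$ in Pisier's theory).

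The main obstacle is the bilinear interpolation step: one must invoke Pisier's theorem carefully and verify that the matrix-entry identification of elements of $\mathcal{S}_p[V]$ is compatible across the interpolation scale so that $B$ is unambiguously defined on the interpolated space. A secondary technical point is convergence of the defining series $\sum_{i,j} \alpha_{ij} v_{ij}$ in full generality; this is automatic when $v$ has finitely many nonzero entries and extends by continuity once the complete contraction has been established.
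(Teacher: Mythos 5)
Your proposal is correct in outline but takes a genuinely different route from the paper. The paper does not run the interpolation machinery itself: it invokes Pisier's completely isometric identifications $\mathcal{S}_p[V] = \mathcal{R}(1/p')\widehat\otimes_h V \widehat\otimes_h \mathcal{R}(1/p)$ and $\mathcal{S}_p\widehat\otimes_{\min} V = \big(\mathcal{R}(1/p')\widehat\otimes_h \mathcal{R}(1/p)\big)\widehat\otimes_{\min} V$ from \cite[Thm. 1.1]{Pisier-Asterisque}, and obtains the complete contraction $\mathcal{S}_p[V]\to\mathcal{S}_p\widehat\otimes_{\min}V$ from the domination of the minimal tensor norm by the Haagerup one together with the tensor shuffle; the identification of the image with $\Theta^v$ is then done by truncation, exactly as you propose at the end. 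Your endpoint-plus-interpolation argument buys independence from the Haagerup representation of $\mathcal{S}_p[V]$, at the price of having to justify two interpolation facts that the paper's route outsources to Pisier. First, the couple in the second variable: at $p=1$ the dual is $\mathcal{B}(\ell_2)$, not $\mathcal{S}_\infty$ (so your endpoint embedding should read $\mathcal{S}_1\widehat\otimes_{\min}V\hookrightarrow\CB(\mathcal{B}(\ell_2),V)$); you must then either interpolate the couple $(\mathcal{S}_1,\mathcal{B}(\ell_2))$ and use that $(\mathcal{S}_1,\mathcal{B}(\ell_2))_{1/p}=(\mathcal{S}_1,\mathcal{S}_\infty)_{1/p}=\mathcal{S}_{p'}$ because interpolation only sees the closure of the intersection, or, more simply, pair against the couple $(\mathcal{S}_1,\mathcal{S}_\infty)$, noting that $\mathcal{S}_1\widehat\otimes_{\proj}V\to\CB(\mathcal{S}_\infty,V)$ is still completely contractive by restriction. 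Second, the bilinear step should be phrased as the complete contractivity of the natural map $(A_0,A_1)_\theta\widehat\otimes_{\proj}(B_0,B_1)_\theta\to(A_0\widehat\otimes_{\proj}B_0,A_1\widehat\otimes_{\proj}B_1)_\theta$ (jointly completely bounded bilinear maps linearize through $\widehat\otimes_{\proj}$), which is available in \cite[Sec. 2]{Pisier-interpolation}; compatibility of the two pairings on finitely supported matrices makes $B$ well defined on the intersection. Granting these, your conclusion that the range lands in the closed subspace $\mathcal{S}_p\widehat\otimes_{\min}V$ of $\CB(\mathcal{S}_p',V)$ by density of finitely supported matrices (so that the induced map into $\mathcal{S}_p\widehat\otimes_{\min}V$ is automatically completely contractive, the inclusion being a complete isometry) and the injectivity argument via recovering matrix entries are both correct and coincide with the paper's final step.
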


\begin{proof}
For $\theta \in  [0,1]$ denote $\mathcal{R}(\theta) = ( \mathcal{R}, \mathcal{C} )_\theta$, where $\mathcal{R}$ and $\mathcal{C}$ are the row and column operator spaces, respectively.
By \cite[Thm. 1.1]{Pisier-Asterisque} we have for any $1\le p\le \infty$, completely isometric identifications
\[
\mathcal{S}_p[V] = \mathcal{R}(1/p') \widehat\otimes_h V \widehat\otimes_h \mathcal{R}(1/p), \qquad \mathcal{S}_p\widehat\otimes_{\min} V = \big( \mathcal{R}(1/p')\widehat\otimes_h \mathcal{R}(1/p)  \big) \widehat\otimes_{\min} V.
\]
Now, since the Haagerup tensor product dominates the minimal one, and using the metric mapping property of the Haagerup tensor product, we have that the formal identity below is a complete contraction
\[
\mathcal{R}(1/p') \widehat\otimes_h V \widehat\otimes_h \mathcal{R}(1/p) \to \big( \mathcal{R}(1/p') \widehat\otimes_{\min} V \big)  \widehat\otimes_h \mathcal{R}(1/p).
\]
By the ``tensor shuffle'' \cite[Thm. 5.15]{Pisier-OS-theory}, the shuffle map
\[
\big( \mathcal{R}(1/p') \widehat\otimes_{\min} V \big)  \widehat\otimes_h \mathcal{R}(1/p) \to \big( \mathcal{R}(1/p')\widehat\otimes_h \mathcal{R}(1/p)  \big) \widehat\otimes_{\min} V
\]
is also a complete contraction.
Composing the aforementioned two maps yields that the formal identity $\mathcal{S}_p[V] \to \mathcal{S}_p\widehat\otimes_{\min} V$ is a complete contraction as well.

Note that for a $v=(v_{ij}) \in S_p[V]$ which is supported on the initial $n \times n$ block, the associated operator given by the standard inclusion $\mathcal{S}_p\widehat\otimes_{\min} V \hookrightarrow \CB(\mathcal{S}_p',V)$ is precisely the one given by $(\alpha_{ij}) \mapsto \sum_{i,j=1}^n \alpha_{ij} v_{ij}$.
Since any element of $\mathcal{S}_p[V]$ (respectively $\mathcal{S}_p\widehat\otimes_{\min} V$) is the norm-limit of its truncations, it follows that $v \in \mathcal{S}_p[V]$ corresponds to the map $\Theta^v : \mathcal{S}_p' \to V$ given by $(\alpha_{ij}) \mapsto \lim_{n\to\infty} \sum_{i,j=1}^{n} \alpha_{ij} v_{ij}$.
The injectivity is immediate, since if $v = (v_{ij})_{i,j=1}^\infty \in \mathcal{S}_p[V]$ corresponds to the zero element in $\mathcal{S}_p\widehat\otimes_{\min} V$, it is clear that we must have $v_{ij}=0$ for all $i,j$.
\end{proof}

For simplicity, from now on, we will say that a matrix set is operator compact if it is an operator  $\infty$-compact matrix set. This notion was coined by Webster in his thesis \cite[Def. 4.1.1]{Webster}.

As already mentioned, in \cite{ChaDiGa-Operator-p-compact} the concept introduced in Definition \ref{defn-operator-p-compact} was presented in a different way: A matrix set $\mathbf{K}$ is relatively operator $p$-compact if it is contained in $\overline{\mathbf{co}_p(v)}$ for some $v\in \mathcal{S}_p [V]$, where for any $n \in \mathbb{N}$:
\[
\left(\overline{\mathbf{co}_p(v)}\right)_n = \left\{(\sigma \otimes \text{Id}_V)v \in M_n(V) \,:\, \sigma \in M_n(\mathcal{S}_{p}'), \; \lVert \sigma \rVert_{M_n(\mathcal{S}_{p}')} \leq 1  \right\},
\]
and $\sigma \otimes \text{Id}_V$ denotes the tensor product of the matrix $\sigma$ with the identity mapping.
The equivalence between this and Definition \ref{defn-operator-p-compact} becomes apparent when interpreting $\sigma \in M_n(\mathcal{S}_{p}')$ as a mapping from $\mathcal{S}_p$ to $M_n$ using the standard identification $ M_n(\mathcal{S}_p') = M_n(\CB(\mathcal{S}_p,\C)) = \CB(\mathcal{S}_p,M_n)$. Hence, $\overline{\mathbf{co}_p(v)}=\Theta^v(\mathbf B_{\mathcal{S}_{p}'})$.

In the context of Banach spaces, the measure of $p$-compactness for a set is introduced in \cite[Def. 2.1]{Galicer-Lassalle-Turco}. Here, we extend this concept to the non-commutative realm.

\begin{definition}\label{def-measure-of-p-compactness}
Let $1 \le p \le \infty$. For a relatively operator $p$-compact matrix set $\mathbf{K}$ over an operator space $V$, we define its \emph{measure of operator $p$-compactness} by
\[
\frak m_p^o(\mathbf{K}) = \inf \big\{ \n{v}_{\mathcal{S}_p[V]} \;:\; v \in \mathcal{S}_p[V],  \mathbf{K}\subset \Theta^v(\mathbf{B}_{\mathcal{S}_p'})  \big\}.
\]
\end{definition}

We first note that for $v \in \mathcal{S}_p[V]$, the matrix set $\Theta^v(\mathbf{B}_{\mathcal{S}_p'})$ is closed. Consequently, the property of being relatively $p$-compact is preserved under closure as stated in lemma below.  

To see this, fix $n \in \mathbb{N}$ and consider the map  
\[
(\Theta^v)_n: M_n(\mathcal{S}_{p}') \to M_n(V), \quad \sigma \mapsto (\sigma \otimes \mathrm{id})(v).
\]  
We aim to show that the image of $B_{M_n(\mathcal{S}_{p}')}$ under $(\Theta^v)_n$ is also closed.  

Let $(\sigma_l)_l \subseteq B_{M_n(\mathcal{S}_{p}')}$ be a sequence such that $(\Theta^v)_n(\sigma_l) \to \bar{v} \in M_n(V)$. We need to show that $\bar{v} = (\Theta^v)_n(\sigma)$ for some $\sigma \in M_n(\mathcal{S}_{p}')$ with $\|\sigma\|_{M_n(\mathcal{S}_{p}')} \leq 1$.  

Since $B_{M_n(\mathcal{S}_{p}')}$ is weak$^*$ sequentially compact (as the predual of $M_n(\mathcal{S}_{p}')$ is separable), there exists a subsequence $(\sigma_{l_j})_j$ that converges weak$^*$ to an element $\sigma \in B_{M_n(\mathcal{S}_{p}')}$. By the uniqueness of limits, the proof will be complete once we establish that, for every matrix $v' \in M_n(V')$, the scalar pairing (in the sense of \cite[1.1.24]{Effros-Ruan-book}) satisfies:  
\[
\langle (\Theta^v)_n(\sigma_{l_j}), v' \rangle \to \langle (\Theta^v)_n(\sigma), v' \rangle.
\]  

Now, observe that:  
\[
\langle (\Theta^v)_n(\sigma_{l_j}), v' \rangle = \langle (\sigma_{l_j} \otimes \mathrm{id})v, v' \rangle = \langle \sigma_{l_j}, (\mathrm{id} \otimes v')v \rangle.  
\]  
By the weak$^*$ convergence of $\sigma_{l_j}$ to $\sigma$, we have:  
\[
\langle \sigma_{l_j}, (\mathrm{id} \otimes v')v \rangle \to \langle \sigma, (\mathrm{id} \otimes v')v \rangle = \langle (\Theta^v)_n(\sigma), v' \rangle.
\]

\begin{lemma}\label{lemma-p-compact-closure}
Let $1 \le p \le \infty$.
A matrix set $\mathbf{K}$ over an operator space $V$ is relatively operator $p$-compact if and only if so is $\overline{\mathbf{K}}$. Furthermore, $\frak m_p^o(\mathbf{K}) = \frak m_p^o(\overline{\mathbf{K}})$.
\end{lemma}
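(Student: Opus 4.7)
The proof essentially follows from the observation, already supplied in the paragraph immediately preceding the lemma statement, that for any $v \in \mathcal{S}_p[V]$ and any $n \in \mathbb{N}$, the image $(\Theta^v)_n(B_{M_n(\mathcal{S}_p')})$ is closed in $M_n(V)$, so that the matrix set $\Theta^v(\mathbf{B}_{\mathcal{S}_p'})$ is itself closed. Given this, the plan is to handle the two inclusions of the equality $\frak m_p^o(\mathbf{K}) = \frak m_p^o(\overline{\mathbf{K}})$ separately, and the equivalence of relative operator $p$-compactness will drop out as a byproduct.

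For the inequality $\frak m_p^o(\overline{\mathbf{K}}) \le \frak m_p^o(\mathbf{K})$, I would start with any $v \in \mathcal{S}_p[V]$ such that $\mathbf{K} \subseteq \Theta^v(\mathbf{B}_{\mathcal{S}_p'})$. Passing to closures level-by-level and invoking the closedness of $\Theta^v(\mathbf{B}_{\mathcal{S}_p'})$ just recalled yields $\overline{\mathbf{K}} \subseteq \Theta^v(\mathbf{B}_{\mathcal{S}_p'})$. Hence $\overline{\mathbf{K}}$ is relatively operator $p$-compact and $\frak m_p^o(\overline{\mathbf{K}}) \le \n{v}_{\mathcal{S}_p[V]}$; taking the infimum over all admissible $v$ gives the desired bound and simultaneously shows the ``only if'' direction of the equivalence.

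For the reverse inequality $\frak m_p^o(\mathbf{K}) \le \frak m_p^o(\overline{\mathbf{K}})$, the observation is simply that $\mathbf{K} \subseteq \overline{\mathbf{K}}$ at every level, so any $v$ witnessing relative operator $p$-compactness of $\overline{\mathbf{K}}$ witnesses it for $\mathbf{K}$, with the same norm. This handles the ``if'' direction and completes the equality of measures.

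There is no real obstacle here: the entire content of the lemma is absorbed by the closedness of $\Theta^v(\mathbf{B}_{\mathcal{S}_p'})$, which was already established (using weak$^*$ sequential compactness of $B_{M_n(\mathcal{S}_p')}$ coming from separability of the relevant predual, together with the weak$^*$-to-weak continuity of $(\Theta^v)_n$). Once that is in hand, the proof is a one-line manipulation in each direction.
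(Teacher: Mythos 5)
Your proposal is correct and is precisely the argument the paper intends: the lemma is stated without a separate proof because its entire content is the closedness of $\Theta^v(\mathbf{B}_{\mathcal{S}_p'})$, which the paper establishes in the paragraph immediately before the statement exactly as you describe. The two one-line inclusion arguments you give are the standard way to extract both the equivalence and the equality of measures from that fact.
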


In the context of Banach spaces, a $p$-compact map is defined as a mapping that sends the unit ball into a relatively $p$-compact set. Extending this concept to our non-commutative framework, a similar property would involve mapping the matrix unit ball into a relatively operator $p$-compact matrix set. We recall this definition that was already introduced in \cite{ChaDiGa-Operator-p-compact} with the usual terminology of that article.

\begin{definition} \label{defn: operator p-compact maps}
A completely bounded mapping  $T: V \to W$  is operator $p$-compact ($1\le p\le \infty$) if $T(\mathbf{B}_V)$ is a relatively operator $p$-compact matrix set in $W$. The operator $p$-compact norm of $T$ is defined as   
\begin{equation}\label{kpo}
\kappa_p^o(T) = \frak m_p^o( T(\mathbf{B}_V) ).
\end{equation}

\end{definition}

We denote the class of all operator $p$-compact mappings $T:V \to W$ by $\mathcal K_p^o(V,W)$,  and this is a mapping ideal.
For more properties and equivalences related to this, see \cite{ChaDiGa-Operator-p-compact}.
Again, we will abbreviate the terminology by saying that a mapping is operator compact if it is operator $\infty$-compact.

We recall the following prototypical examples of 
 operator $p$-compact maps from \cite[Prop. 2.7]{ChaDiGa-Operator-p-compact} that will be repeteadly used along the article.

\begin{example}\label{example-multiplication-is-operator-compact}
Let $1 \leq p \leq \infty$ and  $a,b \in \mathcal{S}_{2p}$. Then the multiplication mapping
$ M(a,b) : \mathcal{S}_{p'} \to \mathcal{S}_1$ given by $x \mapsto a \cdot x \cdot b$
is operator $p$-compact and satisfies $\kappa_p^o(M(a,b)) = \n{a}_{\mathcal{S}_{2p}} \n{b}_{\mathcal{S}_{2p}}$. 
We remark that in the case $p=1$, Lemma \ref{lemma:multiplication-operator-on-B(ell_2)} yields the same conclusion for the multiplication mapping $M(a,b) : \mathcal{B}(\ell_2) \to \mathcal{S}_1$.
In particular, for $a, b \in \mathcal{S}_\infty$, $M(a,b) : \mathcal{S}_1 \to \mathcal{S}_1$ is operator compact.
\end{example}

In the literature, we encounter other notions of compactness within the framework of operator spaces: there are several developed by Webster in his thesis and there is another provided by Yew. Next, we focus  on relating these definitions to the one we just presented.

Let us begin by comparing our noncommutative notion of compactness with the  one from \cite{Yew}, which is defined as follows:

\begin{definition}\label{defn-Yew-compact}
Let $Z$ be an operator space. A linear map $T:V \to W$ is said to be \emph{$Z$-compact} if $T(\mathbf{B}_V) \subset \Theta^w(\mathbf{B}_{Z'})$, where $w \in Z \widehat{\otimes}_{\min}W$ and $\Theta^w : Z' \to W$ is the associated operator.
The $Z$-compact norm of $T$ is
\[
\kappa_Z(T) = \inf\big\{ \n{w}_{Z \widehat{\otimes}_{\min}W} \;:\; w \in Z \widehat{\otimes}_{\min}W, \; T(\mathbf{B}_V) \subset \Theta^w(\mathbf{B}_{Z'}) \big\}
\]
\end{definition}

Comparing Definitions \ref{defn-operator-p-compact} and \ref{defn-Yew-compact},
it is clear that our operator $p$-compact mappings are $\mathcal{S}_p$-compact in the sense of Yew, given that our condition $v\in \mathcal{S}_p[V]$ is more stringent than Yew's requirement of $v \in \mathcal{S}_p \widehat{\otimes}_{\min} V$ (see Lemma \ref{lemma-Theta-well-defined}). For $p=\infty$ both notions coincide since $\mathcal{S}_\infty[V]=\mathcal{S}_\infty\widehat\otimes_{\min} V$.

Although Yew's approach is more general, our more restricted formulation in the case of $\mathcal{S}_p$ yields enhanced properties. While the  $Z$-compact maps in \cite{Yew} do not in general constitute a vector space, our operator $p$-compact maps not only form a vector space but also naturally have the structure of an operator space \cite{ChaDiGa-Operator-p-compact}. Additionally, we obtained a more refined factorization theorem, as evidenced by the comparison between \cite[Thm. 3.9]{Yew} and \cite[Prop. 3.8]{ChaDiGa-Operator-p-compact}.

Returning to the realm of Banach spaces, the property of a set being compact (i.e., the standard notion) has several equivalently useful formulations. However, when this concept is extended to the non-commutative context, distinct and non-equivalent definitions emerge.

One such extension is the concept of operator compact matrix sets, which we have been working with. Another is a notion defined by Webster in \cite[Def. 4.1.2]{Webster}, building on prior work by Saar \cite{Saar}.

\begin{definition}
    A matrix set $\mathbf{K}=(K_n)$ over $V$ is \emph{ completely compact} if it is closed, completely bounded  and for every $\varepsilon>0$ there exists a finite-dimensional $V_\varepsilon \subseteq V$ such that for each $n\in\N$ and $x\in K_n$ there is  $v\in M_n(V_\varepsilon) $ with $\|x-v\|\le \varepsilon$.
    A linear mapping $T : V \to W$ between operator spaces is called \emph{completely compact} if $\overline{T(\mathbf{B}_V)}$ is completely compact.
\end{definition}

 It is worth noting, as was observed in \cite{Webster}, that this definition is more general; that is to say, if a mapping is operator compact, it is also completely  compact.

The following  example shows that the prototypical mapping, involving the two-sided multiplication by elements of $\mathcal{S}_{\infty}$ over $\mathcal{S}_p$, is always completely compact (but generally not operator compact when $1< p \le \infty$).  
We recall a useful definition relevant to our discussion. For a finite-dimensional operator space $V$, its \emph{exactness constant} is defined as  
\[
\ex(V) = \inf \big\{ \|T\|_{\cb} \|T^{-1}\|_{\cb} : W \subseteq M_n, \, T : V \to W \text{ is an isomorphism} \big\}.
\]  

\begin{proposition}\label{proposition-multiplication-is-completely-compact}
Let $1 \leq p \leq \infty$ and  $a,b \in \mathcal{S}_{\infty}$. Then the multiplication mapping
$
M(a,b) : \mathcal{S}_p \to \mathcal{S}_p
$
is completely compact.
However, when $1< p \le \infty$ it is not necessarily operator compact. 
\end{proposition}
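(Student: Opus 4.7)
\emph{Plan for the complete compactness.} The strategy is to realize $M(a,b):\mathcal{S}_p\to\mathcal{S}_p$ as a cb-norm limit of finite-rank maps. The key estimate is that $\mathcal{S}_p$ is a completely bounded bimodule over $\mathcal{B}(\ell_2)$, so that
\[
\n{M(c,d):\mathcal{S}_p\to\mathcal{S}_p}_{\cb}\le \n{c}_{\mathcal{B}(\ell_2)}\n{d}_{\mathcal{B}(\ell_2)};
\]
this is immediate for $p=\infty$ from the $C^*$-algebra action on $\mathcal{S}_\infty$, transfers to $p=1$ by duality, and extends to $1<p<\infty$ by complex interpolation. Since $a,b\in\mathcal{K}(\ell_2)=\mathcal{S}_\infty$, pick finite-rank $a_k\to a$ and $b_k\to b$ in operator norm; then $M(a_k,b_k)$ has finite-dimensional range, and bilinearity plus the above cb-estimate give $\n{M(a,b)-M(a_k,b_k)}_{\cb}\to 0$. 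Given $\varepsilon>0$, choose $k$ so that this cb-difference is below $\varepsilon$ and set $V_\varepsilon:=\mathrm{Range}(M(a_k,b_k))$. For every $n$ and every $x\in B_{M_n(\mathcal{S}_p)}$ the element $M(a_k,b_k)_n(x)\in M_n(V_\varepsilon)$ lies within $\varepsilon$ of $M(a,b)_n(x)$, which is precisely the condition defining complete compactness.

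\emph{Plan for the failure of operator compactness.} For $1<p\le\infty$, the strategy is to exhibit specific $a,b\in\mathcal{S}_\infty$ (a natural candidate being $a=b$ a positive diagonal compact operator with slowly decaying singular values) for which the matrix image $M(a,b)(\mathbf{B}_{\mathcal{S}_p})$ cannot be enclosed in any $\Theta^v(\mathbf{B}_{\mathcal{S}_p'})$ with $v\in\mathcal{S}_p[\mathcal{S}_p]$. The heuristic is that the restriction of $M(a,b)$ to the finite-dimensional block $M_n\subset\mathcal{S}_p$ reduces, up to scalars determined by the eigenvalues of $a$, to the canonical embedding $\iota_n:M_n\hookrightarrow \mathcal{S}_p$. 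If $\kappa_\infty^o(\iota_n)$ can be shown to grow unboundedly with $n$—a phenomenon tied to the exactness constant $\ex(\cdot)$ recalled just above the proposition, and to the fact that matrix subspaces of noncommutative $L_p$ spaces are not uniformly accommodated for $p\neq 1$—then by calibrating the decay of the singular values of $a$ against this growth one forces $\kappa_\infty^o(M(a,b))=+\infty$ via the ideal property of $\kappa_p^o$.

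\emph{Main obstacle.} The harder half is the negative direction. The monotonicity of operator $p$-compactness (Proposition \ref{prop-monotonicity-compactness}) together with the ideal axioms for $\kappa_\infty^o$ supply the algebraic scaffolding for turning a lower bound on the embeddings $\iota_n$ into a lower bound on the multiplication map; what must be established is the quantitative growth of $\kappa_\infty^o(\iota_n:M_n\to\mathcal{S}_p)$ itself. The case $p=\infty$ is expected to be the cleanest, with the cases $1<p<\infty$ following by interpolation using the complementary fact, already noted in Example \ref{example-multiplication-is-operator-compact}, that the $\mathcal{S}_1$-valued multiplications always sit inside the operator $p$-compact ideal; this localizes the obstruction to the $\mathcal{S}_\infty$ endpoint.
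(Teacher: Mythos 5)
Your first half is correct and coincides with the paper's argument: the completely bounded bimodule estimate $\n{M(c,d):\mathcal{S}_p\to\mathcal{S}_p}_{\cb}\le\n{c}\,\n{d}$ together with truncation of $a$ and $b$ exhibits $M(a,b)$ as a $\cb$-norm limit of finite-rank maps, and such limits are completely compact directly from the definition.

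The second half has a genuine gap. Everything hinges on the quantitative growth of the operator compactness norm of the identity on the $n\times n$ blocks, and you neither prove it nor cite it; moreover, your proposed routes to it would not close the gap. First, the quantity actually produced by restricting $M(a,a)$ to a diagonal block of the domain $\mathcal{S}_p$ is $\kappa_\infty^o(Id_{\mathcal{S}_p^n}:\mathcal{S}_p^n\to\mathcal{S}_p^n)$ --- the block carries the $\mathcal{S}_p^n$ operator space structure, not the $M_n$ one, so the relevant map is the identity of $\mathcal{S}_p^n$ rather than your embedding $\iota_n:M_n\hookrightarrow\mathcal{S}_p$ (whose $\kappa_\infty^o$ only dominates the one you need, so lower bounds for $\iota_n$ would not transfer in the required direction). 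The paper supplies the missing growth by two external results: $\kappa_\infty^o(Id_{\mathcal{S}_p^n})=\ex(\mathcal{S}_{p'}^n)$ from Oikhberg's thesis, and $\ex(\mathcal{S}_{p'}^n)\to\infty$ for every $1\le p'<\infty$ from Junge's Habilitationsschrift; you correctly sense the connection to exactness constants but leave precisely this point as ``what must be established.'' Second, your plan to deduce the cases $1<p<\infty$ ``by interpolation'' from the $p=\infty$ endpoint, using that the $\mathcal{S}_1$-valued multiplications are operator $p$-compact, cannot work as stated: interpolation yields upper bounds, and knowing that one endpoint is good and the other bad gives no lower bound on $\kappa_\infty^o$ at intermediate $p$. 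The paper instead treats each $p\in(1,\infty]$ directly via the single estimate $\ex(\mathcal{S}_{p'}^n)\to\infty$. Once that estimate is in hand, your calibration $a=b=\oplus_n\alpha_n I_n$ with $\alpha_n\to 0$ and $\alpha_n^2\,\ex(\mathcal{S}_{p'}^n)\to\infty$, combined with the complete contractive complementation of $\mathcal{S}_p^n$ in $\mathcal{S}_p$ and the ideal property, is exactly how the paper finishes.
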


\begin{proof}
It is easy to check from the definition that if a linear map is a limit in the $\cb$-norm of finite-rank maps, then it is completely compact.
This is the case for $M(a,b)$ which can be shown by approximating $a$ and $b$ with their truncations, see e.g. the proof of \cite[Prop. 2.7]{ChaDiGa-Operator-p-compact}.

With respect to the second statement, if $I_n \in M_n$ denotes the identity matrix, note that by \cite[Cor. 4.16]{Oikhberg-Thesis}
\[
\kappa_\infty^o( M(I_n,I_n) : \mathcal{S}_p^n \to \mathcal{S}_p^n ) = \kappa_\infty^o( Id_{\mathcal{S}_p^n}) = \ex(\mathcal{S}_{p'}^n) 
\]
which converges to infinity when $1 < p \le \infty$ by \cite[Ex. 3.3.1.3]{Junge-Habilitationschrift}.
Now, consider a sequence of positive numbers $\alpha_n >0$ converging to 0 and such that $\alpha_n^2\ex(\mathcal{S}_{p'}^n)$ converges to infinity (for example, $\alpha_n = \ex(\mathcal{S}_{p'}^n)^{-1/4}$).
Note that $a= b = \oplus_{n=1}^\infty \alpha_n I_n \in \mathcal{S}_\infty$ because $\alpha_n \to 0$.
On the other hand, for each $n \in \N$ we will have
\[
\kappa_\infty^o( M(a,b):\mathcal{S}_p \to \mathcal{S}_p ) \ge \kappa_\infty^o( M(\alpha_nI_n,\alpha_nI_n) : \mathcal{S}_p^n \to \mathcal{S}_p^n ) = \alpha_n^2 \ex(\mathcal{S}_{p'}^n) \xrightarrow[n\to\infty]{} \infty.
\]
Note that the inequality above is justified because $\mathcal{S}_p^n$ is completely contractively complemented in $\mathcal{S}_p$. 
\end{proof}

\begin{remark}
Let us observe that the case $p=\infty$ in the previous proof gives a negative answer to a question of Webster \cite[Sec. 4.1]{Webster} restated in \cite[Problem 4.3]{ChaDiGa-Operator-p-compact}: whether the space of operator compact maps must be closed in the $\cb$-norm.
Indeed, the multiplication mapping $M(a,b):\mathcal{S}_\infty \to \mathcal{S}_\infty$ appearing in the aforementioned proof is not operator compact, but it is the $\cb$-norm limit of its truncations which have finite rank and are therefore operator compact.
It is interesting to note that all the ingredients needed for this answer to the question, posed in 1997, were already available in 1998 (which we did not realize when we restated the question in \cite{ChaDiGa-Operator-p-compact}).
\end{remark}

\subsection{Monotonicity of operator $p$-compactness}

Similar to the classical context, $p$-compactness for matrix sets is monotonic in $p$. To establish this, we will first prove a lemma which illustrates how, given a suitable factorization of an element $v$ in $\mathcal{S}_p[V]$, one can decompose the mapping $\Theta^v$ in terms of a canonical multiplication map. This insight proves to be highly beneficial throughout our study.
It is noteworthy to emphasize that the referred factorization of $v$ is guaranteed to exist by \cite[Lem. 4.2]{CD-completely-mixing} (which is a generalization of \cite[Thm. 1.5]{Pisier-Asterisque}).

\begin{lemma}\label{lemma-factorization-Pisier}
Let $1 \le p , q, r \le \infty$ with $1/p = 1/q + 1/r$ and let $V$ be an operator space.
Suppose  that $v \in \mathcal{S}_p[V]$ is written as $v = a \cdot u \cdot b$ with $a,b \in \mathcal{S}_{2r}$ and $u \in \mathcal{S}_q[V]$. Then the associated map $\Theta^v : \mathcal{S}_{p}' \to V$ factors as $\Theta^u \circ M(a^t, b^t)$ where $\Theta^u : \mathcal{S}_{q}' \to V$ is the mapping associated to $u$ and $M(a^t, b^t) : \mathcal{S}_{p}' \to \mathcal{S}_{q}'$ is the two-sided multiplication mapping.
\end{lemma}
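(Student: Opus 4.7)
My plan is to reduce to finite matrix truncations, where the identity becomes an entrywise computation, and then extend to the general case by continuity.

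For the truncation, let $P_N$ denote the orthogonal projection of $\ell_2$ onto its first $N$ coordinates and set $a_N = P_N a P_N$, $b_N = P_N b P_N$, $u_N = (P_N \otimes \mathrm{Id}_V)\, u\, (P_N \otimes \mathrm{Id}_V)$, and $v_N := a_N u_N b_N$. Since $a_N, b_N, u_N, v_N$ are all supported on the first $N \times N$ block, both $\Theta^{v_N}$ and $\Theta^{u_N} \circ M(a_N^t, b_N^t)$ factor through the restriction $\alpha \mapsto P_N \alpha P_N$, reducing the identity for these truncated objects to the finite matrix computation
\begin{align*}
\Theta^{v_N}(\alpha) &= \sum_{i,j} \alpha_{ij}\, (v_N)_{ij} = \sum_{i,j,k,l} \alpha_{ij}\, (a_N)_{ik}\, (u_N)_{kl}\, (b_N)_{lj} \\
&= \sum_{k,l} \Big(\sum_{i,j} (a_N)_{ik}\, \alpha_{ij}\, (b_N)_{lj}\Big) (u_N)_{kl} = \sum_{k,l} (a_N^t \alpha b_N^t)_{kl}\, (u_N)_{kl} = \Theta^{u_N}\bigl(M(a_N^t, b_N^t)(\alpha)\bigr),
\end{align*}
where I used $v_N = a_N u_N b_N$ together with the entrywise formula $(a_N^t \alpha b_N^t)_{kl} = \sum_{i,j} (a_N)_{ik}\, \alpha_{ij}\, (b_N)_{lj}$.

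To conclude, I would pass to the limit $N \to \infty$. The truncations converge in the respective Schatten-valued norms: $a_N \to a$ and $b_N \to b$ in $\mathcal{S}_{2r}$ (in operator norm when $2r = \infty$, using $\mathcal{S}_\infty = \mathcal{K}(\ell_2)$), and $u_N \to u$ in $\mathcal{S}_q[V]$ (at the endpoint $q = \infty$, using norm-density of finitely supported tensors in $\mathcal{S}_\infty \otimes_{\min} V = \mathcal{S}_\infty[V]$, which follows from density of finite rank operators in $\mathcal{S}_\infty$ together with complete contractivity of the truncation maps). Hölder-type continuity of the triple product then gives $v_N \to v$ in $\mathcal{S}_p[V]$. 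Lemma \ref{lemma-Theta-well-defined} upgrades these to $\Theta^{v_N} \to \Theta^v$ and $\Theta^{u_N} \to \Theta^u$ in the $\cb$-norm, and continuity of Schatten-class multiplication yields $M(a_N^t, b_N^t) \to M(a^t, b^t)$ in the $\cb$-norm. Passing to the limit on both sides of the displayed identity produces $\Theta^v = \Theta^u \circ M(a^t, b^t)$.

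The main obstacle is simply the convergence bookkeeping in the endpoint cases $2r = \infty$ or $q = \infty$; no single step is delicate beyond invoking the density facts recalled above and the continuity of multiplication in the Schatten scale.
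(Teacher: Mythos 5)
Your proposal is correct and follows essentially the same route as the paper's proof: verify the identity entrywise for finite truncations, then pass to the limit using the convergence of truncations in $\mathcal{S}_{2r}$, $\mathcal{S}_q[V]$, and $\mathcal{S}_p[V]$ (the H\"older-type estimate from \cite[Thm. 1.5]{Pisier-Asterisque}), together with Lemma \ref{lemma-Theta-well-defined} and the $\cb$-continuity of $(a,b)\mapsto M(a^t,b^t)$. The only cosmetic difference is that the paper checks the finite-rank identity on matrix units rather than on a general $\alpha$, and truncates only $a$ and $b$ (noting $a[n]\cdot u[n]\cdot b[n]=a[n]\cdot u\cdot b[n]$), which changes nothing of substance.
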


\begin{proof}
We start by considering the case where $v$, $a$, $b$ and $w$ are all finitely supported on the initial $n \times n$ block.
Then $\Theta^v : \mathcal{S}_{p'} \to V$ (that is, the map corresponding to $v$ as an element of $\mathcal{S}_p \otimes_{\min} V$) is given by $(\alpha_{ij}) \to \sum_{i,j=1}^n \alpha_{ij}v_{ij}$, and similarly $\Theta^u : \mathcal{S}_{q'} \to V$ is the map $(\alpha_{ij}) \to \sum_{i,j=1}^n \alpha_{ij}u_{ij}$.
We denote by $E_{ij}$ the matrix units, i.e. $E_{ij}$ has a 1 in the $(i,j)$-th position and zeros elsewhere.
On one hand it is clear that for $1\le i, j \le n$ we have $\Theta^v(E_{ij}) = v_{ij} = \sum_{k,\ell=1}^n a_{ik} u_{k\ell} b_{\ell j}$.
On the other hand,
\begin{align*}
\Theta^u \circ M(a^t, b^t) E_{ij} &  = \Theta^u(a^t E_{ij} b^t) = \Theta^u \sum_{k,\ell=1}^n (a^t)_{ki} (b^t)_{j\ell} E_{k \ell} \\
& = \Theta^u \sum_{k,\ell=1}^n a_{ik} b_{\ell j} E_{k \ell} 
= \sum_{k,\ell=1}^n a_{ik} b_{\ell j} \Theta^uE_{k \ell} = \sum_{k,\ell=1}^n a_{ik} b_{\ell j} u_{k \ell},
\end{align*}
which proves that $\Theta^v = \Theta^u \circ M(a^t, b^t)$.

The above argument shows the desired result in the finitely supported case, so now we just need to make sure that things work well in the limit.
For each $n\in\N$, let $u[n],a[n],b[n]$ be the truncations of $u, a, b$ respectively to the initial $n\times n$ block (but still considered as infinite matrices). We let $\widetilde{v}[n] = a[n]\cdot u[n] \cdot b[n] = a[n] \cdot u \cdot b[n]$.
From the finitely supported case, note that
$\Theta^{\widetilde{v}[n]} = \Theta^{u[n]} \circ M(a[n]^t,b[n]^t)$.
Since $a[n]^t \to a^t$ and $b[n]^t \to b^t$ in $\mathcal{S}_{2r}$, by \cite[Lem. 2.4]{Oikhberg} we have that $M(a[n]^t,b[n]^t) \to M(a^t,b^t)$
 in $\CB(\mathcal{S}_p',\mathcal{S}_q')$.
 Since $u[n] \to u$ in $\mathcal{S}_q[V]$, we have from Lemma \ref{lemma-Theta-well-defined} that $\Theta^{u[n]} \to \Theta^u$ in $\CB(\mathcal{S}_q',V)$.
 Similarly,  \cite[Thm. 1.5]{Pisier-Asterisque} implies that $\widetilde{v}[n] \to v$ in $\mathcal{S}_p[V]$ and thus $\Theta^{\widetilde{v}[n]} \to \Theta^v$ in $\CB(\mathcal{S}_p',V)$. All together, these imply  $\Theta^v = \Theta^u \circ M(a^t, b^t)$ as desired.
 \end{proof}

We now present a comparative result that emulates the classical relationship between  $p$-compactness and $q$-compactness in Banach spaces.

\begin{proposition}\label{prop-monotonicity-compactness}
Let $1 \le p < q \le \infty$.
Every relatively operator $p$-compact matrix set $\mathbf{K}$ is relatively operator $q$-compact with $\frak m_q^o(\mathbf{K})\le \frak m_p^o(\mathbf{K})$. 
Consequently, every operator $p$-compact mapping $T:V \to W$ is operator $q$-compact with $\kappa_q^o(T)\le \kappa_p^o(T)$.
\end{proposition}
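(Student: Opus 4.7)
The plan is to reduce the monotonicity statement for matrix sets to the factorization of elements of $\mathcal{S}_p[V]$ combined with Lemma \ref{lemma-factorization-Pisier}. The mapping statement will then follow immediately from the definition of $\kappa_p^o$.

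Fix $1 \le p < q \le \infty$ and let $\mathbf{K}$ be relatively operator $p$-compact. Choose $r$ with $1/p = 1/q + 1/r$ (so $1 \le r \le \infty$), and pick any $v \in \mathcal{S}_p[V]$ with $\mathbf{K} \subseteq \Theta^v(\mathbf{B}_{\mathcal{S}_{p'}})$. By the noncommutative H\"older-type factorization (\cite[Lem. 4.2]{CD-completely-mixing}, extending \cite[Thm. 1.5]{Pisier-Asterisque}, which is in fact the same tool already invoked to justify Lemma \ref{lemma-factorization-Pisier}), given $\varepsilon>0$ we can write $v = a\cdot u \cdot b$ with $a,b \in \mathcal{S}_{2r}$ and $u \in \mathcal{S}_q[V]$ satisfying
\[
\|a\|_{\mathcal{S}_{2r}}\,\|u\|_{\mathcal{S}_q[V]}\,\|b\|_{\mathcal{S}_{2r}} \le (1+\varepsilon)\,\|v\|_{\mathcal{S}_p[V]}.
\]

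Now I apply Lemma \ref{lemma-factorization-Pisier} to decompose $\Theta^v = \Theta^u \circ M(a^t,b^t)$, where $M(a^t,b^t):\mathcal{S}_{p'} \to \mathcal{S}_{q'}$. Since multiplication by $a^t$ and $b^t$ is completely bounded with cb-norm at most $\|a\|_{\mathcal{S}_{2r}}\,\|b\|_{\mathcal{S}_{2r}}$ (using the symmetry $\|a^t\|_{\mathcal{S}_{2r}} = \|a\|_{\mathcal{S}_{2r}}$), we obtain
\[
M(a^t,b^t)\big(\mathbf{B}_{\mathcal{S}_{p'}}\big) \subseteq \|a\|_{\mathcal{S}_{2r}}\,\|b\|_{\mathcal{S}_{2r}}\,\mathbf{B}_{\mathcal{S}_{q'}}.
\]
Setting $\widetilde{u} = \|a\|_{\mathcal{S}_{2r}}\,\|b\|_{\mathcal{S}_{2r}}\,u \in \mathcal{S}_q[V]$, linearity of $\Theta^{\bullet}$ gives
\[
\mathbf{K} \subseteq \Theta^v(\mathbf{B}_{\mathcal{S}_{p'}}) = \Theta^u\big(M(a^t,b^t)(\mathbf{B}_{\mathcal{S}_{p'}})\big) \subseteq \Theta^{\widetilde u}(\mathbf{B}_{\mathcal{S}_{q'}}),
\]
so $\mathbf{K}$ is relatively operator $q$-compact and
\[
\frak m_q^o(\mathbf{K}) \le \|\widetilde u\|_{\mathcal{S}_q[V]} = \|a\|_{\mathcal{S}_{2r}}\,\|u\|_{\mathcal{S}_q[V]}\,\|b\|_{\mathcal{S}_{2r}} \le (1+\varepsilon)\,\|v\|_{\mathcal{S}_p[V]}.
\]
Taking the infimum over admissible $v$ and then letting $\varepsilon \to 0$ yields $\frak m_q^o(\mathbf{K}) \le \frak m_p^o(\mathbf{K})$.

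The mapping statement is then immediate: if $T:V\to W$ is operator $p$-compact, then $T(\mathbf{B}_V)$ is relatively operator $p$-compact, hence relatively operator $q$-compact by the first part, and $\kappa_q^o(T) = \frak m_q^o(T(\mathbf{B}_V)) \le \frak m_p^o(T(\mathbf{B}_V)) = \kappa_p^o(T)$.

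The only potentially delicate point is ensuring the H\"older-type factorization of $v\in \mathcal{S}_p[V]$ with the sharp control on $\|a\|_{\mathcal{S}_{2r}}\|u\|_{\mathcal{S}_q[V]}\|b\|_{\mathcal{S}_{2r}}$; this is where I rely on the cited generalization of Pisier's theorem. Everything else is a direct plug-in of Lemma \ref{lemma-factorization-Pisier} together with the completely bounded estimate for two-sided multiplication between Schatten duals.
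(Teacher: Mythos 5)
Your argument is correct and is essentially identical to the paper's proof: both choose $r$ with $1/p=1/q+1/r$, invoke \cite[Lem. 4.2]{CD-completely-mixing} to factor $v=a\cdot u\cdot b$, apply Lemma \ref{lemma-factorization-Pisier} to get $\Theta^v=\Theta^u\circ M(a^t,b^t)$, and use the complete boundedness of the two-sided multiplication between the Schatten duals (the paper cites \cite[Thm. 2.1]{Oikhberg} for this, which you should also do explicitly) before taking infima. The only cosmetic difference is that the paper normalizes $\|a\|_{\mathcal{S}_{2r}}=\|b\|_{\mathcal{S}_{2r}}=1$ so that $M(a^t,b^t)$ is a complete contraction, whereas you carry the constant into $\widetilde{u}$.
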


\begin{proof}
Let $1/p = 1/q+1/r$.
Let $\mathbf{K}$ be a relatively operator $p$-compact matrix set over the operator space $W$.
Let $w \in \mathcal{S}_p[W]$ such that $\mathbf{K} \subseteq \Theta^w( \mathbf{B}_{\mathcal{S}_{p}'})$.
By \cite[Lem. 4.2]{CD-completely-mixing}, given $\varepsilon>0$ we can factor $w = a \cdot z \cdot b$ with $a,b \in \mathcal{S}_{2r}$, $z \in \mathcal{S}_q[W]$, $\n{a}_{2r} = \n{b}_{2r} =1$ and $\n{z}_{\mathcal{S}_q[W]} \le (1+\varepsilon)\n{w}_{\mathcal{S}_p[W]}$.
By Lemma \ref{lemma-factorization-Pisier}, $\Theta^w = \Theta^z \circ M(a^t, b^t)$.
Note that by \cite[Thm. 2.1]{Oikhberg} $M(a^t,b^t) : \mathcal{S}_{p}' \to \mathcal{S}_{q}'$ is a complete contraction, since $1/r = 1/p - 1/q = 1/q'-1/p'$, so
\[
\mathbf{K}  \subseteq \Theta^w( \mathbf{B}_{\mathcal{S}_{p}'} ) = \Theta^z \circ M(a^t, b^t)( \mathbf{B}_{\mathcal{S}_{p}'} ) \subseteq \Theta^z ( \mathbf{B}_{\mathcal{S}_{q}'} ),
\]
which implies that $\mathbf{K}$ is relatively operator $q$-compact and moreover $\frak m_q^o(\mathbf{K}) \le \n{z}_{\mathcal{S}_q[W]} \le (1+\varepsilon) \n{w}_{\mathcal{S}_p[W]}$.
Taking the infimum over $w$ and $\varepsilon$ yields $\frak m_q^o(\mathbf{K})\le \frak m_p^o(\mathbf{K})$.
\end{proof}

Later on, we will demonstrate in Corollary \ref{cor: p-not-q} that there are matrix sets (and completely bounded maps) which are operator $p$-compact but not operator $q$-compact for $q < p$.

\subsection{Factorization of operator $p$-compact mappings}

In the classical setting, the following trick is oftentimes useful: a sequence $(x_n)$ in $\ell_p$ can be written as $(\lambda_ny_n)$ where  $(y_n) \in \ell_p$, $(\lambda_n)\in c_0$ and $\n{(y_n)}_{\ell_p}\n{(\lambda_n)}_{c_0} \le (1+\varepsilon)\n{(x_n)}_{\ell_p}$.
In the noncommutative setting, a similar maneuver holds. We record this  fact as a remark for future reference. Note that it follows from the aforementioned classical situation together with the singular value decomposition.

\begin{remark}\label{remark-trick}
Let $1 \le p \le \infty$. Given $\varepsilon>0$, any  $a \in \mathcal{S}_{p}$ can be factored as $a = k_1a_1k_2$ with $k_1, k_2 \in \mathcal{S}_{\infty}$, $a_1 \in \mathcal{S}_{p}$ and $\n{k_1}_{\mathcal{S}_{\infty}} \n{a_1}_{\mathcal{S}_{p}} \n{k_2}_{\mathcal{S}_{\infty}} \le (1+\varepsilon)\n{a}_{\mathcal{S}_{p}}$.
Similar one-sided factorizations can also be achieved.
\end{remark}

Therefore, any $a,b \in \mathcal{S}_{2p}$ can be factored as $a = k_3a_1k_1$, $b = k_2b_1k_4$ with $k_1,k_2,k_3,k_4 \in \mathcal{S}_\infty$, and $a_1,b_1 \in \mathcal{S}_{2p}$. This implies that the prototypical operator $p$-compact mapping (from Example \ref{example-multiplication-is-operator-compact}) $M(a,b) : \mathcal{S}_{p'} \to \mathcal{S}_1$ factors as
$$
\xymatrix{
\mathcal{S}_{p'} \ar[r]^{M(a,b)} \ar[d]_{M(k_1,k_2)} &\mathcal{S}_1 \\
\mathcal{S}_{p'} \ar[r]_{M(a_1,b_1)} &\mathcal{S}_1 \ar[u]_{M(k_3,k_4)}
}
$$
where the first mapping is completely compact, the second one is operator $p$-compact, and the third one is operator compact. While general operator $p$-compact mappings do not necessarily factor through one of these prototypical examples, we will  show below that this type of factorization can nevertheless always
be achieved. This is a noncommutative version of \cite[Prop. 2.9]{Galicer-Lassalle-Turco}, which in turn generalized \cite[Thm. 3.1]{Choi-Kim}. The presentation of the argument looks rather different, but conceptually it is very similar. We first isolate a lemma that will be used several times in the proof.

\begin{lemma}\label{lemma-factorization-linear-algebra}
Let $V_1,V_2,W$ be operator spaces, $T_j \in \CB(V_j,W)$ for $j=1,2$ and $T \in \CB(V_1,V_2)$ such that $T_1 = T_2T$. For $j=1,2$ let $Q_j : V_j \to V_j/\ker(T_j)$ be the canonical quotient and $\widetilde{T_j} : V_j/\ker(T_j) \to W$ be the associated monomorphism such that $T_j = \widetilde{T_j}Q_j$.
Then there exists $A \in \CB(V_1/\ker(T_1), V_2/\ker(T_2))$ such that $A Q_1 = Q_2 T$, that is, making the following diagram commutative,
\[
\xymatrix{
& & W &\\ 
V_1/\ker(T_1) \ar@{.>}@/_2.5pc/[rrrr]^{A} \ar@/^1.5pc/[urr]^{\widetilde{T_1}}  &V_1 \ar[ur]^{T_1} \ar@{->>}[l]_>>>>>{Q_1} \ar[rr]^T & &V_2\ar[ul]_{T_2} \ar@{->>}[r]^>>>>>{Q_2} &V_2/\ker(T_2) \ar@/_1.5pc/[ull]_{\widetilde{T_2}}\\
}
\]
and moreover $\n{A}_{\cb} \le \n{T}_{\cb}$.
If additionally $T$ is operator $p$-compact, then so is $A$ and $\kappa_p^o(A) \le \kappa_p^o(T)$.
If $T$ is completely compact, so is $A$.
\end{lemma}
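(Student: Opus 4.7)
The plan is to first construct $A$ algebraically from the universal property of the quotient $Q_1$ and then derive the three assertions (cb-norm, operator $p$-compact, completely compact) one at a time.

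For the construction I would begin by checking that $\ker(T_1)\subseteq \ker(Q_2 T)$: if $v_1\in\ker(T_1)$ then $T_2 T v_1 = T_1 v_1 = 0$, so $T v_1 \in\ker(T_2)$ and hence $Q_2 T v_1 = 0$. Consequently there is a unique linear map $A:V_1/\ker(T_1)\to V_2/\ker(T_2)$ with $A Q_1 = Q_2 T$. The cb-norm estimate then follows from the universal property of the complete $1$-quotient $Q_1$, namely that a linear map out of $V_1/\ker(T_1)$ is completely bounded exactly when its composition with $Q_1$ is, with equal cb-norms: hence $\n{A}_{\cb}=\n{AQ_1}_{\cb}=\n{Q_2 T}_{\cb}\le \n{T}_{\cb}$, since $Q_2$ is a complete contraction.

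For the operator $p$-compact assertion, the mapping ideal property of $\mathcal{K}_p^o$ gives that $Q_2 T$ is operator $p$-compact with $\kappa_p^o(Q_2 T)\le \n{Q_2}_{\cb}\kappa_p^o(T)\le \kappa_p^o(T)$. The point is that this transfers cleanly to $A$ via the inclusion
\[
A(\mathbf{B}_{V_1/\ker(T_1)})\subseteq \overline{Q_2 T(\mathbf{B}_{V_1})},
\]
which I would establish using that $(Q_1)_n$ sends the open matrix unit ball of $M_n(V_1)$ onto the open matrix unit ball of $M_n(V_1/\ker(T_1))$ (since $Q_1$ is a complete $1$-quotient): any element of the closed unit ball in the quotient is a norm-limit of images of elements of the closed unit ball upstairs, and continuity of $A$ then gives the inclusion. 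Lemma \ref{lemma-p-compact-closure} tells us that $\overline{Q_2 T(\mathbf{B}_{V_1})}$ is relatively operator $p$-compact with the same measure, so $\kappa_p^o(A)\le \kappa_p^o(Q_2 T)\le \kappa_p^o(T)$.

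The completely compact case proceeds in the same spirit. If $T$ is completely compact and $V_2^\varepsilon\subseteq V_2$ is a finite-dimensional subspace approximating $\overline{T(\mathbf{B}_{V_1})}$ within $\varepsilon$, then $Q_2(V_2^\varepsilon)\subseteq V_2/\ker(T_2)$ is finite dimensional and approximates $\overline{Q_2 T(\mathbf{B}_{V_1})}$ within $\varepsilon$ because $Q_2$ is a complete contraction. The inclusion from the previous paragraph shows $\overline{A(\mathbf{B}_{V_1/\ker(T_1)})}$ sits inside this set, so the same finite-dimensional subspace approximates it and $A$ is completely compact. The main obstacle throughout is just bookkeeping with closures on the quotient side; once one has the inclusion $A(\mathbf{B}_{V_1/\ker(T_1)})\subseteq \overline{Q_2 T(\mathbf{B}_{V_1})}$, Lemma \ref{lemma-p-compact-closure} does the essential work for the $p$-compact case and a parallel statement for completely compact sets handles the last item.
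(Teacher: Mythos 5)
Your proposal is correct and follows essentially the same route as the paper: define $A$ via the kernel inclusion $\ker(T_1)\subseteq\ker(Q_2T)$, then use the complete $1$-quotient property of $Q_1$ to squeeze $A(\mathbf{B}_{V_1/\ker(T_1)})$ inside (a small enlargement of) $Q_2T(\mathbf{B}_{V_1})$ and read off all three conclusions. The only cosmetic difference is that the paper works with the inclusions $A(\mathbf{B}_{V_1/\ker(T_1)})\subseteq(1+\varepsilon)\,Q_2T(\mathbf{B}_{V_1})$ and lets $\varepsilon\to 0$, whereas you pass to the closure $\overline{Q_2T(\mathbf{B}_{V_1})}$ and invoke Lemma \ref{lemma-p-compact-closure}; both are valid.
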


\begin{proof}
 To see that  $A$ is well defined note that if $v\in\ker(T_1) $ then $Tv\in\ker(T_2)$ because $T_1v=T_2T(v)$.
Since for every $\varepsilon>0$ we have
\[
A(\mathbf{B}_{V_1/\ker(T_1)}) \subseteq AQ_1(1+\varepsilon)\mathbf{B}_{V_1} = (1+\varepsilon) Q_2T(\mathbf{B}_{V_1})
\]
the rest of the conclusions are immediate.
\end{proof}

We will now prove the aforementioned factorization result. Note that for this decomposition we need to manage both notions of compactness.

\begin{theorem}\label{thm-factorization-Choi-Kim}
Any operator $p$-compact mapping $T: V \to W$ can be factored as $AT_0B$ where $A$ is operator compact, $T_0$ is operator $p$-compact, and $B$ is completely compact. Moreover, $\kappa_p^o(T) = \inf\{ \n{A}_{\cb} \kappa_p^o(T_0) \n{B}_{\cb} \}$ where the infimum is taken over all such factorizations.
In the case $p=\infty$, we can moreover take $B$ to be operator compact. 
\end{theorem}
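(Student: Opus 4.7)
The strategy is to first explicitly factor the canonical map $\Theta^v$ associated to a realizer of the operator $p$-compactness of $T$, and then transfer this factorization to $T$ via repeated applications of Lemma \ref{lemma-factorization-linear-algebra}. Fix $\varepsilon > 0$ and choose $v \in \mathcal{S}_p[W]$ with $T(\mathbf{B}_V) \subseteq \Theta^v(\mathbf{B}_{\mathcal{S}_{p'}})$ and $\n{v}_{\mathcal{S}_p[W]} \le (1+\varepsilon)\kappa_p^o(T)$. Applying \cite[Lem. 4.2]{CD-completely-mixing} with $q = \infty$, $r = p$, write $v = a \cdot w \cdot b$ with $a, b \in \mathcal{S}_{2p}$ of norm $1$, $w \in \mathcal{S}_\infty[W]$, and $\n{w}_{\mathcal{S}_\infty[W]} \le (1+\varepsilon)\n{v}_{\mathcal{S}_p[W]}$; then use the one-sided form of Remark \ref{remark-trick} to factor $a = k_3 a_1$ and $b = b_1 k_4$ with $k_3, k_4 \in \mathcal{S}_\infty$, $a_1, b_1 \in \mathcal{S}_{2p}$, with norm control $\n{k_3}_\infty \n{a_1}_{2p},\; \n{b_1}_{2p}\n{k_4}_\infty \le 1+\varepsilon$. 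Lemma \ref{lemma-factorization-Pisier} applied piecewise, together with the elementary identity $M(X_1 X_2, Y_1 Y_2) = M(X_1, Y_2) \circ M(X_2, Y_1)$, then yields
\[
\Theta^v \;=\; \Theta^w \circ M(a_1^t, b_1^t) \circ M(k_3^t, k_4^t) \;:\; \mathcal{S}_{p'} \to \mathcal{S}_{p'} \to \mathcal{S}_1 \to W,
\]
whose three constituents are, by Proposition \ref{proposition-multiplication-is-completely-compact} and Example \ref{example-multiplication-is-operator-compact}, respectively completely compact, operator $p$-compact, and operator compact. When $p = \infty$, the first constituent is in fact operator compact.

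To descend this factorization to $T$, observe that the inclusion $T(\mathbf{B}_V) \subseteq \Theta^v(\mathbf{B}_{\mathcal{S}_{p'}})$ together with the injectivity of the induced map $\overline{\Theta^v} : \mathcal{S}_{p'}/\ker \Theta^v \to W$ produces a unique well-defined linear complete contraction $\widetilde T : V \to \mathcal{S}_{p'}/\ker \Theta^v$ with $T = \overline{\Theta^v} \widetilde T$. Now apply Lemma \ref{lemma-factorization-linear-algebra} twice: first to $\Theta^v = (\Theta^w \circ M(a_1^t, b_1^t)) \circ M(k_3^t, k_4^t)$, producing a completely compact map $A^{(1)} : \mathcal{S}_{p'}/\ker\Theta^v \to \mathcal{S}_{p'}/\ker(\Theta^w M(a_1^t, b_1^t))$ (operator compact when $p=\infty$, since the same holds for $M(k_3^t, k_4^t)$ in that range); and then to $\Theta^w M(a_1^t, b_1^t) = \Theta^w \circ M(a_1^t, b_1^t)$, producing an operator $p$-compact map $A^{(2)} : \mathcal{S}_{p'}/\ker(\Theta^w M(a_1^t, b_1^t)) \to \mathcal{S}_1/\ker\Theta^w$. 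The lemma also gives $\overline{\Theta^v} = \overline{\Theta^w} \circ A^{(2)} \circ A^{(1)}$, so setting $A := \overline{\Theta^w}$, $T_0 := A^{(2)}$, $B := A^{(1)} \widetilde T$ yields the desired factorization $T = A T_0 B$. The operator compactness of $A$ follows from that of $\Theta^w$ because the quotient unit ball is the image of $\mathbf{B}_{\mathcal{S}_1}$, and the relevant compactness of $B$ follows from the mapping ideal property applied to $A^{(1)}$ and $\widetilde T$.

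The inequality $\kappa_p^o(T) \le \n{A}_\cb \kappa_p^o(T_0) \n{B}_\cb$ is immediate from the ideal property of $\kappa_p^o$. For the reverse, tracking norms gives $\n{A}_\cb \le \n{w}_{\mathcal{S}_\infty[W]}$, $\kappa_p^o(T_0) \le \kappa_p^o(M(a_1^t, b_1^t)) = \n{a_1}_{2p} \n{b_1}_{2p}$ by Example \ref{example-multiplication-is-operator-compact}, and $\n{B}_\cb \le \n{k_3}_\infty \n{k_4}_\infty$, whose product is bounded by $(1+\varepsilon)^3 \n{v}_{\mathcal{S}_p[W]} \le (1+\varepsilon)^4 \kappa_p^o(T)$; letting $\varepsilon \to 0$ completes the argument. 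The main technical obstacle is the passage from $\Theta^v$ to $T$: unlike the Banach space setting, there is in general no cb linear lift of $T$ through $\Theta^v$, so we must work via the injective quotient $\overline{\Theta^v}$ and systematically invoke Lemma \ref{lemma-factorization-linear-algebra} to descend each piece of the $\Theta^v$-factorization.
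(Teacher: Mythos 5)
Your proof is correct and follows essentially the same route as the paper: the same ingredients (Pisier's factorization of $v$, Remark \ref{remark-trick}, Lemma \ref{lemma-factorization-Pisier}, the lift of $T$ through the quotient $\mathcal{S}_{p'}/\ker\Theta^v$, and repeated use of Lemma \ref{lemma-factorization-linear-algebra}) appear in the same roles, with the same norm bookkeeping. The only difference is presentational: the paper carries out the two two-factor decompositions $T=T_0B$ and $T=AT_0$ separately, whereas you assemble the full three-factor decomposition $T=AT_0B$ in a single pass, which if anything matches the statement more directly.
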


\begin{proof}
Let $T : V \to W$ be operator $p$-compact. To simplify the presentation, we will show separately that one can achieve factorizations of the forms $T = T_0B$ and $T=AT_0$.

Let $\varepsilon>0$ be given. Then, there exists $w \in \mathcal{S}_p[W]$ such that $T(\mathbf{B}_V) \subseteq \Theta^w(\mathbf{B}_{\mathcal{S}_p'})$ and $\n{w}_{\mathcal{S}_p[W]} \le (1+\varepsilon)\kappa_p^o(T)$.
Using the equivalent definition of operator $p$-compact mapping in terms of commutative diagrams from \cite[Def. 3.2]{ChaDiGa-Operator-p-compact}, specifically the version of the diagram obtained in 
the proof of \cite[Thm. 3.11]{ChaDiGa-Operator-p-compact} (note that the argument is analogous to that of Theorem \ref{thm-factorization-weakly-p-compact} below), there exists $B_0 \in \CB(V,\mathcal{S}_p')$ with $\n{B_0}_{\cb} \le 1$ such that the following diagram commutes
\[
\xymatrix{
V \ar[r]^{T} \ar[dr]_{B_0} &W & \mathcal{S}_p' \ar[l]_{\Theta^w} \ar@{->>}[ld]^Q\\
& \mathcal{S}_{p}'/\ker(\Theta^w) \ar[u]_{\widetilde{\Theta^w}} &
}
\]
where $Q : \mathcal{S}_p' \to \mathcal{S}_p'/\ker(\Theta^w)$ is the quotient map and $\widetilde{\Theta^w} : \mathcal{S}_{p}'/\ker(\Theta^w) \to W$ is the natural monomorphism associated to $\Theta^w$.
By \cite[Thm. 1.5]{Pisier-Asterisque} we can write $w = \alpha \cdot v \cdot \beta$ with $\alpha,\beta \in \mathcal{S}_{2p}$ and $v \in \mathcal{S}_\infty[W]$.
By Remark \ref{remark-trick}, we can write $\alpha= a \alpha_0$, $\beta = \beta_0 b$ with $\alpha_0,\beta_0 \in \mathcal{S}_{2p}$ and $a,b\in \mathcal{S}_\infty$. Letting $w_0 = \alpha_0 \cdot v \cdot \beta_0 \in \mathcal{S}_p[W]$ (here we are using \cite[Thm. 1.5]{Pisier-Asterisque} again) we have $w = a \cdot w_0 \cdot b$ with $a,b\in \mathcal{S}_\infty$, $w_0 \in \mathcal{S}_p[W]$.
Moreover, note that we can additionally assume $\n{a}_{\mathcal{S}_\infty} = \n{b}_{\mathcal{S}_\infty} = 1$ and $\n{w_0}_{\mathcal{S}_p[W]} \le (1+\varepsilon)\n{w}_{\mathcal{S}_p[W]}$.

It now follows from Lemma \ref{lemma-factorization-Pisier} that $\Theta^w = \Theta^{w_0} M(a^t,b^t)$ with $M(a^t,b^t) : \mathcal{S}_p' \to \mathcal{S}_p'$. Now let $Q_0 : \mathcal{S}_p' \to \mathcal{S}_p'/\ker(\Theta^{w_0})$ be the quotient map and $\widetilde{\Theta^{w_0}} : \mathcal{S}_{p}'/\ker(\Theta^{w_0}) \to W$ be the natural monomorphism associated to $\Theta^{w_0}$.
By Lemma \ref{lemma-factorization-linear-algebra}, there exists $B_1 \in\CB(\mathcal{S}_p'/\ker(\Theta^w), \mathcal{S}_p'/\ker(\Theta^{w_0}))$ such that $B_1 Q = Q_0M(a^t,b^t)$, with $\n{B_1}_{\cb} = \n{Q_0M(a^t,b^t)}_{\cb} \le 1$.
Furthermore, since $M(a^t,b^t)$ is completely compact (see Proposition \ref{proposition-multiplication-is-completely-compact}) so is $B_1$.
Note that
$T =  \widetilde{\Theta^w} B_0 = \widetilde{\Theta^{w_0}}B_1B_0 = T_0 B$
where $T_0 =\widetilde{\Theta^{w_0}}$ is operator $p$-compact with $\kappa_p^o(T_0) \le (1+\varepsilon)^2\kappa^o_p(T)$ and $B = B_1B_0$ is completely compact with $\n{B}_{\cb} \le 1$.
In the case $p=\infty$, note that the map $M(a^t,b^t) : \mathcal{S}_1 \to \mathcal{S}_1$ above is in fact operator compact (see Example \ref{example-multiplication-is-operator-compact}) and therefore so are $B_1$ and $B$.

For the other factorization, once again given $\varepsilon>0$ take $w \in \mathcal{S}_p[W]$ as above. From \cite[Thm. 1.5]{Pisier-Asterisque} and Remark \ref{remark-trick}, we can write $w = \alpha a_0\cdot v_0 \cdot b_0\beta$ with $\alpha,\beta \in \mathcal{S}_{2p}$, $a_0,b_0 \in \mathcal{S}_\infty$, $v_0\in \mathcal{S}_\infty[V]$,  $\n{\alpha}_{\mathcal{S}_{2p}} = \n{\beta}_{\mathcal{S}_{2p}} = \n{a_0}_{\mathcal{S}_\infty} = \n{b_0}_{\mathcal{S}_\infty} = 1$, $\n{v_0}_{\mathcal{S}_\infty[V]} \le (1+\varepsilon) \n{w}_{\mathcal{S}_p[W]}$. Using Lemma \ref{lemma-factorization-Pisier} twice, $\Theta^w = \Theta^{v_0}M(a_0^t,b_0^t)M(\alpha^t,\beta^t)$ where $M(\alpha^t,\beta^t): \mathcal{S}_p' \to \mathcal{S}_1$, $M(a_0^t,b_0^t): \mathcal{S}_1 \to \mathcal{S}_1$ and $\Theta^{v_0} : \mathcal{S}_1 \to W$.
By Lemma \ref{lemma-factorization-linear-algebra}, we can find completely bounded linear maps 
$A_1, A_2$ making the following diagram commutative (where the double-headed arrows are the canonical quotient maps and the tilde is used to denote the associated natural monomorphisms as before):
\[
\xymatrix{
 \mathcal S'_p /\ker(\Theta^w)\ar@/^1.5pc/[drr]^{\widetilde{\Theta^{w}}} \ar[ddd]_{A_1} & & &\\
&\mathcal{S}_p' \ar@{->>}[ul] \ar[r]^{\Theta^w} \ar[d]_{M(\alpha^t,\beta^t)} &W & \\
& \mathcal{S}_1 \ar@{->>}[dl] \ar[r]_{ M(a_0^t,b_0^t) } & \mathcal{S}_1 \ar[u]^{\Theta^{v_0}}\ar@{->>}[r] & \mathcal{S}_1/\ker(\Theta^{v_0}) \ar[lu]_{\widetilde{\Theta^{v_0}}} \\
\mathcal{S}_1/\ker(\Theta^{v_0}M(a_0^t,b_0^t)) \ar@/_1pc/[urrr]_{A_2}  & & &\\
}
\]
Moreover Lemma \ref{lemma-factorization-linear-algebra} additionally yields that since $M(\alpha^t,\beta^t)$ is operator $p$-compact so is $A_1$, and since $M(a_0^t,b_0^t)$ is operator compact so is $A_2$.
Thus we have obtained a factorization $T = \widetilde{\Theta^{v_0}}A_2A_1B_0$  where $A_1B_0$ is operator $p$-compact with
$\kappa_p^o(A_1B_0) \le \kappa^o_p(A_1) \n{B_0}_{\cb} \le \n{\alpha^t}_{\mathcal{S}_{2p}} \n{\beta^t}_{\mathcal{S}_{2p}} \le 1$, and $\widetilde{\Theta^{v_0}}A_2$ is operator compact with $\n{\widetilde{\Theta^{v_0}}A_2}_{\cb} \le \n{v_0}_{\mathcal{S}_\infty[W]}\n{a_0^t}_{\mathcal{S}_\infty}\n{b_0^t}_{\mathcal{S}_\infty} \le (1+\varepsilon)^2\kappa^o_p(T)$. 
\end{proof}

\begin{remark}
Theorem \ref{thm-factorization-Choi-Kim} is also valid with $\kappa_\infty^o(A)$ in place of $\n{A}_{\cb}$.
\end{remark}

\section{Adjoints of operator $p$-compact maps}\label{sec-adjoints}

Both in the classical and in the noncommutative setting it is a relevant issue, given a mapping ideal, to have a description of their elements through their adjoints mappings.
In the Banach space realm, a linear map $T : X \to Y$ is $p$-compact if and only if its adjoint $T' : Y' \to X'$ is quasi $p$-nuclear \cite[Prop. 3.8]{Delgado-Pineiro-Serrano-adjoints}, see also \cite[Cor. 2.7]{Galicer-Lassalle-Turco} for the isometric version (the latter notion will not play a role in the present paper so we will not define it precisely, but the corresponding operator space notion is presented below in Definition \ref{defn-quasi-completely-p-nuclear}).

In the operator space framework, no fully general characterization of any of the various notions of compactness for a mapping is known in terms of a property of its adjoint.
The only available results are limited to maps with finite-dimensional codomains: \cite[Thm. 4.3]{Yew} shows that when $W$ is finite-dimensional, $T : V \to W$ is $Z$-compact if and only if its adjoint $T': W'\to V'$ factors through a subspace of $Z$ (and with equality of norms). This is a generalization of \cite[Cor. 4.15]{Oikhberg-Thesis}, which covers the case of operator compact mappings.

In this section, inspired by analogous work in the Banach space case \cite{Delgado-Pineiro-Serrano-adjoints}, we obtain similar and more general results for operator $p$-compact mappings.
The adjoint of an operator $p$-compact mapping is quasi $p$-nuclear in the appropriate sense  (see below for the definition and Proposition \ref{prop:adjoint-of-p-compact} for the result).
The reverse implication holds when the codomain has finite dimension, just as in the aforementioned results of \cite{Oikhberg-Thesis,Yew}, and more generally whenever the codomain is completely contractively complemented in its bidual (see Proposition \ref{prop:adjoint-of-p-compact-finite-dimension}).

To understand the behavior of the adjoint of a $p$-compact mapping, we will need two definitions related to the so-called nuclearity properties.

\begin{definition}\label{defn-completely-p-nuclear}
Let $V$ and $W$ be operator spaces and $T : V \to W$ be a linear map.
Following \cite[Def. 3.1.3.1]{Junge-Habilitationschrift},
we say that $T : V \to W$ is \emph{completely $p$-nuclear} if there exist $a,b \in \mathcal{S}_{2p}$ such that $T$  admits a factorization
$$
\xymatrix{
V \ar[r]^T \ar[d]_{R} &W \\
\mathcal{S}_{\infty} \ar[r]_{M(a,b)} &\mathcal{S}_p \ar[u]_{S}
}
$$
where $R,S$ are completely bounded linear maps.
The completely $p$-nuclear norm of $T$ is defined as
$
\nu_p^{\circ}(T) = \inf \big\{ \n{R}_{\cb} \n{S}_{\cb}  \n{a}_{\mathcal{S}_{2p}} \n{b}_{\mathcal{S}_{2p}}  \big\}
$
where the infimum is taken over all factorizations as above.
We denote the class of all completely $p$-nuclear mappings $T:V \to W$ by $\mathcal N_p^o(V,W)$,  and this is a mapping ideal. For simplicity, if $p=1$ we write $\nu_1^o$ as $\nu^o$. The ideal in this case corresponds to the well-known class of completely nuclear mappings \cite[Sec. 12.2]{Effros-Ruan-book}.
\end{definition}

Note that in the factorization in Definition \ref{defn-completely-p-nuclear} above $\mathcal{S}_\infty$ can be replaced by $\mathcal{B}(\ell_2)$, because of the following result 
contained in the proof of \cite[Lem. 6.4]{CD-completely-mixing} (and which follows easily from Remark \ref{remark-trick}).

\begin{lemma}\label{lemma:multiplication-operator-on-B(ell_2)}
Let $1 \le p \le \infty$, and let $a,b\in \mathcal{S}_{2p}$. For every $\varepsilon>0$, the multiplication mapping $M=M(a,b) : \mathcal{B}(\ell_2) \to \mathcal{S}_p$ admits a factorization $M = M_1 \circ M_2$ where $M_2 : \mathcal{B}(\ell_2) \to \mathcal{S}_\infty$ is a complete contraction and  $M_1= M(a_1,b_1) : \mathcal{S}_\infty \to \mathcal{S}_p$ is a multiplication mapping with $a_1 , b_1 \in \mathcal{S}_{2p}$ satisfying $\n{a_1}_{\mathcal{S}_{2p}} \le (1+\varepsilon)\n{a}_{\mathcal{S}_{2p}}$ and $\n{b_1}_{\mathcal{S}_{2p}} \le (1+\varepsilon)\n{b}_{\mathcal{S}_{2p}}$.
\end{lemma}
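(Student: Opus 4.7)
The plan is to use Remark \ref{remark-trick} to ``peel off'' a compact (i.e.\ $\mathcal{S}_\infty$) factor from each of $a$ and $b$ on the side closest to the variable $x$, and absorb those compact pieces into the first multiplication map so that its image automatically lands in $\mathcal{S}_\infty$.

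\medskip

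\noindent\textbf{Step 1 (factoring $a$ and $b$).} Given $\varepsilon>0$, invoke the one-sided version of Remark \ref{remark-trick} to write
\[
a = a_1 k_3, \qquad b = k_4 b_1,
\]
with $a_1,b_1 \in \mathcal{S}_{2p}$, $k_3,k_4 \in \mathcal{S}_\infty$, $\n{k_3}_{\mathcal{S}_\infty}, \n{k_4}_{\mathcal{S}_\infty} \le 1$, and
$\n{a_1}_{\mathcal{S}_{2p}} \le (1+\varepsilon)\n{a}_{\mathcal{S}_{2p}}$, $\n{b_1}_{\mathcal{S}_{2p}} \le (1+\varepsilon)\n{b}_{\mathcal{S}_{2p}}$. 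The sidedness is chosen precisely so that the $\mathcal{S}_\infty$-factors sit adjacent to the slot where $x$ will be inserted.

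\medskip

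\noindent\textbf{Step 2 (defining $M_1$ and $M_2$).} Set
\[
M_2 = M(k_3,k_4) : \mathcal{B}(\ell_2) \to \mathcal{S}_\infty, \qquad M_1 = M(a_1,b_1) : \mathcal{S}_\infty \to \mathcal{S}_p.
\]
The map $M_2$ is well-defined into $\mathcal{S}_\infty$ since for any $x \in \mathcal{B}(\ell_2)$ the product $k_3 x k_4$ is a product of a compact operator with bounded operators and hence compact. It is a complete contraction because two-sided multiplication on $\mathcal{B}(\ell_2)$ by operators of operator norm at most one is completely contractive (a direct consequence of the fact that left- and right-multiplications separately are complete contractions in this setting). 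The map $M_1$ is well-defined from $\mathcal{S}_\infty$ into $\mathcal{S}_p$ by the standard Hölder-type relation $\mathcal{S}_{2p} \cdot \mathcal{S}_\infty \cdot \mathcal{S}_{2p} \subseteq \mathcal{S}_p$.

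\medskip

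\noindent\textbf{Step 3 (composition and norms).} A direct computation gives, for every $x \in \mathcal{B}(\ell_2)$,
\[
M_1 \circ M_2 (x) = a_1(k_3 x k_4)b_1 = (a_1 k_3) x (k_4 b_1) = a x b = M(a,b)(x),
\]
so $M = M_1 \circ M_2$. The norm estimates $\n{a_1}_{\mathcal{S}_{2p}} \le (1+\varepsilon)\n{a}_{\mathcal{S}_{2p}}$ and $\n{b_1}_{\mathcal{S}_{2p}} \le (1+\varepsilon)\n{b}_{\mathcal{S}_{2p}}$ from Step~1 are exactly those claimed in the statement.

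\medskip

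The only substantive point is the existence of the one-sided $\mathcal{S}_{2p} \cdot \mathcal{S}_\infty$ factorization with small norm loss, which is precisely the content of Remark \ref{remark-trick}; once that is in hand, the rest is a bookkeeping exercise together with the elementary observation that premultiplication (and postmultiplication) by a compact operator sends $\mathcal{B}(\ell_2)$ into $\mathcal{S}_\infty$.
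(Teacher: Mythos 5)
Your proof is correct and follows exactly the route the paper intends: the paper gives no written proof, merely noting that the lemma ``follows easily from Remark \ref{remark-trick}'' (citing the proof of \cite[Lem.~6.4]{CD-completely-mixing}), and your argument --- one-sided factorizations $a=a_1k_3$, $b=k_4b_1$ with the compact factors placed adjacent to the $x$-slot, so that $M(k_3,k_4)$ maps $\mathcal{B}(\ell_2)$ completely contractively into $\mathcal{S}_\infty$ --- is precisely the intended fleshing-out of that remark.
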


The second nuclearity notion was introduced in \cite{ChaDiGa-Operator-p-compact}:

\begin{definition}\label{defn-comp-right-p-nuclear}
Let $V$ and $W$ be operator spaces and $T : V \to W$ a linear map.
We say that $T : V \to W$ is \emph{completely right $p$-nuclear} if there exist $a,b \in \mathcal{S}_{2p}$ such that $T$  admits a factorization
$$
\xymatrix{
V \ar[r]^T \ar[d]_{R} &W \\
\mathcal{S}_{p'} \ar[r]_{M(a,b)} &\mathcal{S}_1 \ar[u]_{S}
}
$$
where $R,S$ are completely bounded linear maps.
The completely right $p$-nuclear norm of $T$ is defined as
$
\nu^p_o(T) = \inf \big\{ \n{R}_{\cb} \n{S}_{\cb}  \n{a}_{\mathcal{S}_{2p}} \n{b}_{\mathcal{S}_{2p}}  \big\}
$
where the infimum is taken over all factorizations as above.
We denote the class of all completely right $p$-nuclear mappings $T:V \to W$ by $\mathcal N^p_o(V,W)$,  and this is a mapping ideal.
\end{definition}

The reader is cautioned to be mindful of the similarity in the notations for completely $p$-nuclear and completely right $p$-nuclear mappings, where the only difference is whether the $p$ is a subindex $(\mathcal N_p^o; \nu_p^o)$ or a superindex $(\mathcal N^p_o; \nu^p_o)$. Similar notations have traditionally been used in the Banach space literature, so we have chosen to be consistent with that.

\begin{remark}
Since our interest in this work is the operator $p$-compact mappings, we touch on the completely $p$-nuclear and completely right $p$-nuclear ones only very superficially, without intending to address their own theory. For example, it is very natural to wonder about the alternative version of Definition \ref{defn-comp-right-p-nuclear} where the multiplication maps $M(a,b) : \mathcal{S}_{p'} \to \mathcal{S}_1$ are replaced by their finite-dimensional versions $\mathcal{S}^n_{p'} \to \mathcal{S}^n_1$. 
In Lemma \ref{lemma-finite-right-p-nuclear} below we check a case where those two notions agree, 
and we thank an anonymous referee for reminding us to clarify that a far more detailed study of this and other related questions was already done by Junge in \cite[Sec. 3.1]{Junge-Habilitationschrift}.
For example, our aforementioned Lemma \ref{lemma-finite-right-p-nuclear} is essentially a special case of \cite[Cor. 3.1.4.5]{Junge-Habilitationschrift}.

The main difference between Junge's work and our own is the choice of primary object: for us the fundamental objects are defined using the infinite-dimensional multiplication maps $M(a,b) : \mathcal{S}_{p'} \to \mathcal{S}_1$, whereas for Junge the focus is on the finite-dimensional ones (see the definition of the $\gamma_{pq,\mathcal{M}_{pq}}$ mappings in \cite[Sec. 3.1.4]{Junge-Habilitationschrift}). This is due to a difference in goals, and means that our results are closely related to Junge's but difficult to compare directly. The similarity is not at all accidental: \cite{Junge-Habilitationschrift} was a crucial inspiration for us, and we use its results and techniques repeatedly throughout this article.
\end{remark}

Another notion that has its roots in the theory of Banach space mappings related with $p$-compactness is defined as follows:

\begin{definition}\label{defn-quasi-completely-p-nuclear}
Let $V$ and $W$ be operator spaces and $T : V \to W$ a linear map.
We say that $T$ is \emph{quasi completely $p$-nuclear} if $j \circ T : V \to Y$ is completely $p$-nuclear, where $j : W \to Y$ is a completely isometric embedding of $W$ into an injective operator space $Y$. We will denote $q\nu^o_p(T) = \nu^o_p(j \circ T)$.
Note that this definition is independent of the particular embedding.
We denote the class of all quasi completely $p$-nuclear mappings $T:V \to W$ by $\mathcal{QN}^o_p(V,W)$,  and this is a mapping ideal.
\end{definition}

The upcoming proposition's proof requires a specific construction. For an operator space $V$, there exists a set $I$ and a family $(n_i)_{i \in I} \subset \mathbb{N}$ such that $V$ can be represented as the quotient of  $\ell_1(\{\mathcal{S}_1^{n_i} \}_{i \in I})$, as detailed in \cite[Prop. 2.12.2]{Pisier-OS-theory}. We denote this space by $Z_V$, and $Q_V : Z_V \rightarrow V$ represents the corresponding complete 1-quotient mapping. Notably, $Z_V$ is projective, implying that its dual space $Z_V'$ is injective \cite[Chapter 24]{Pisier-OS-theory}.

The subsequent result is an operator space version of \cite[Cor. 3.4 and Prop. 3.8]{Delgado-Pineiro-Serrano-adjoints}, with the significant difference that in part $(a)$ we only get one implication and not the equivalence.

\begin{proposition}\label{prop:adjoint-of-p-compact}
Let $1 \le p \le \infty$.
\begin{enumerate}[(a)]
\item If $T \in \mathcal{K}^o_p(V,W)$, then $T' \in \mathcal{QN}^o_p(W',V')$ and moreover $q\nu^o_p(T') \le \kappa^o_p(T)$.
\item $T \in \mathcal{QN}^o_p(V,W)$ if and only if $T' \in \mathcal{K}^o_p(W',V')$ and moreover $\kappa^o_p(T') = q\nu_p^o(T)$.
\item $T \in \mathcal{QN}^o_p(V,W)$ if and only if $T'' \in \mathcal{QN}^o_p(V'',W'')$, and moreover $q\nu_p^o(T) = q\nu_p^o(T'')$.
\end{enumerate}

\end{proposition}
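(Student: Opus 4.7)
The plan is to prove (a) directly by dualizing a factorization of $\Theta^w$ and using projectivity of the $\ell_1$-cover of $V$, to deduce the forward direction of (b) by dualizing the factorization in Definition \ref{defn-completely-p-nuclear}, and to obtain the reverse direction of (b) together with (c) by combining these with a bidual argument. For part (a), fix $T \in \mathcal{K}_p^o(V,W)$ and $\varepsilon>0$, and choose $w \in \mathcal{S}_p[W]$ with $T(\mathbf{B}_V) \subseteq \Theta^w(\mathbf{B}_{\mathcal{S}_p'})$ and $\n{w}_{\mathcal{S}_p[W]} \le (1+\varepsilon)\kappa_p^o(T)$. By Lemma \ref{lemma-factorization-Pisier} applied with $q=\infty$ we may write $w = a\cdot u\cdot b$ with $a,b\in \mathcal{S}_{2p}$ and $u\in \mathcal{S}_\infty[W]$, so $\Theta^w = \Theta^u\circ M(a^t,b^t)\colon \mathcal{S}_p' \to \mathcal{S}_1\to W$; dualizing produces $(\Theta^w)' = M(a,b)\circ (\Theta^u)'\colon W'\to \mathcal{S}_\infty \to \mathcal{S}_p$, which is in exactly the form required by Definition \ref{defn-completely-p-nuclear}. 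To transfer this to $T'$, consider the canonical complete 1-quotient $Q_V\colon Z_V \to V$ from the $\ell_1$-type projective cover $Z_V$ described in the preliminaries. Then $TQ_V\colon Z_V\to W$ is still operator $p$-compact with the same certificate $w$, and by projectivity of $Z_V$ the map $B_0Q_V\colon Z_V \to \mathcal{S}_p'/\ker(\Theta^w)$ coming from the commutative diagram inside the proof of Theorem \ref{thm-factorization-Choi-Kim} lifts, at the price of a factor $1+\varepsilon$, to a completely bounded $\widetilde{B}_0\colon Z_V \to \mathcal{S}_p'$ satisfying $\Theta^w\widetilde{B}_0 = TQ_V$. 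Dualizing gives $Q_V'T' = \widetilde{B}_0'(\Theta^w)'\colon W'\to Z_V'$, which is completely $p$-nuclear by the ideal property; since $Z_V'$ is injective and $Q_V'$ is a completely isometric embedding, this shows $T' \in \mathcal{QN}_p^o(W',V')$ with $q\nu_p^o(T') \le \kappa_p^o(T)$ after letting $\varepsilon\to 0$.

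For the forward direction of (b), fix a completely isometric embedding $j\colon W\hookrightarrow Y$ into an injective $Y$ and factor the completely $p$-nuclear map $jT = SM(a,b)R$ as in Definition \ref{defn-completely-p-nuclear}. Dualizing yields $T'j' = R'\circ M(a^t,b^t)\circ S'$, where $M(a^t,b^t)\colon \mathcal{S}_p'\to \mathcal{S}_1$ is precisely the prototypical operator $p$-compact multiplication map of Example \ref{example-multiplication-is-operator-compact}, so $T'j'$ is operator $p$-compact by the ideal property. Since $j$ is a completely isometric embedding, $j'$ is a complete 1-quotient, so $j'(\mathbf{B}_{Y'}) = \mathbf{B}_{W'}$ and thus $T'(\mathbf{B}_{W'}) = (T'j')(\mathbf{B}_{Y'})$ is relatively operator $p$-compact; infimizing the estimate $\kappa_p^o(T')\le \kappa_p^o(T'j')\le q\nu_p^o(T)$ yields one inequality. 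For the reverse direction, apply (a) to $T'$ to obtain $T'' \in \mathcal{QN}_p^o(V'',W'')$ with $q\nu_p^o(T'') \le \kappa_p^o(T')$; if $j_{W''}\colon W''\hookrightarrow Y$ is a completely isometric embedding into an injective $Y$ witnessing this, then $j_{W''}\iota_W\colon W\hookrightarrow Y$ is a completely isometric embedding into the same injective, and $(j_{W''}\iota_W)T = j_{W''}T''\iota_V$ is completely $p$-nuclear by the ideal property; invoking the stated independence of the choice of embedding in Definition \ref{defn-quasi-completely-p-nuclear} gives $T \in \mathcal{QN}_p^o(V,W)$ with $q\nu_p^o(T)\le q\nu_p^o(T'')$.

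Part (c) follows from (a) and (b): the ``only if'' direction is the composition $T\in\mathcal{QN}_p^o \Rightarrow T'\in\mathcal{K}_p^o \Rightarrow T''\in\mathcal{QN}_p^o$, while the ``if'' direction is exactly the argument of the previous paragraph applied to $T''$. Chaining the resulting inequalities $q\nu_p^o(T)\le q\nu_p^o(T'') \le \kappa_p^o(T') \le q\nu_p^o(T)$ forces equality throughout, giving both norm identities. The main technical hurdle is part (a): operator $p$-compactness of $T$ only provides a factorization through the quotient $\mathcal{S}_p'/\ker(\Theta^w)$ rather than through $\mathcal{S}_p'$ itself, and this cannot be improved in general since $V$ need not be projective; replacing $V$ with its $\ell_1$-projective cover $Z_V$ is the key device turning the quotient-level factorization into a genuine factorization through $\mathcal{S}_p'$, and simultaneously, via $Q_V'$, produces $Z_V'$ as the injective superspace in which $T'$ becomes completely $p$-nuclear. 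The remaining points are organizational: tracking the $(1+\varepsilon)$ losses and using that the adjoint of a multiplication map $M(a,b)$ is still a multiplication map $M(a^t,b^t)$ with the same Schatten norms.
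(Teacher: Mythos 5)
Your proof is correct and follows essentially the same route as the paper's: part (a) by dualizing a right-$p$-nuclear factorization of $TQ_V$ (which you reconstruct via projectivity of $Z_V$ where the paper simply cites the factorization characterization from its predecessor) and then using injectivity of $Z_V'$; part (b) forward by dualizing the completely $p$-nuclear factorization; and part (c) together with the reverse of (b) by the same bidual/ideal-property argument and the same circular chain of inequalities. The only real variation is in the forward direction of (b), where you conclude directly from $j'(\mathbf{B}_{Y'})=\mathbf{B}_{W'}$ for an arbitrary injective embedding $j$ rather than restricting the dualized diagram to $Z_{W'}\subseteq Z_{W'}''$ as the paper does — both are valid and yours is slightly more direct; just note that in (a) the map $(\Theta^u)'$ lands in $\mathcal{B}(\ell_2)$ rather than $\mathcal{S}_\infty$, an issue resolved by Lemma \ref{lemma:multiplication-operator-on-B(ell_2)} exactly as in the paper's own argument.
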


\begin{proof}
$(a)$ Assume that $T \in \mathcal{K}^o_p(V,W)$. By \cite[Prop. 3.8 and Thm. 2.8]{ChaDiGa-Operator-p-compact}
there is a commutative diagram for $TQ_V$
\[
\xymatrix{
Z_V \ar[r]^{Q_V} \ar[d] & V \ar[r]^T &W\\
\mathcal{S}_{p'} \ar[r]_{M(a,b)} &\mathcal{S}_1 \ar[ru] &
}
\]
where $a,b\in \mathcal{S}_{2p}$ and $\kappa^o_p(T)$ is the infimum of the product of the $\cb$-norms of the mappings involved in this factorization.
Dualizing gives
\[
\xymatrix{
W' \ar[r]^{T'} \ar[d] & V' \ar[r]^{Q_V'} &Z_V'\\
\mathcal{B}(\ell_2) \ar[r]_{M(b,a)} &\mathcal{S}_{p'}' \ar[ru] &
}
\]
Note that for $1 \le p < \infty$, $\mathcal{S}_{p'}'= \mathcal{S}_p$.
In the case $p=\infty$, $\mathcal{S}_{\infty'}'= \mathcal{B}(\ell_2)$ so the lower row of the above diagram is $M(b,a) : \mathcal{B}(\ell_2) \to \mathcal{B}(\ell_2)$. But since $a,b \in \mathcal{S}_\infty$, the multiplication map $M(b,a)$ actually takes values in $\mathcal{S}_\infty$, so we conclude that for all $1\le p \le \infty$ we have a diagram 
\[
\xymatrix{
W' \ar[r]^{T'} \ar[d] & V' \ar[r]^{Q_V'} &Z_V'\\
\mathcal{B}(\ell_2) \ar[r]_{M(b,a)} &\mathcal{S}_p \ar[ru] &
}
\]
showing that $Q_V'T'$ is completely $p$-nuclear by Lemma \ref{lemma:multiplication-operator-on-B(ell_2)}.
Since $Z_V'$ is an injective operator space and $Q_V'$ is a completely isometric injection, we conclude that $T' \in \mathcal{QN}^o_p(W',V')$. Taking the infimum over all such factorizations of $T$ yields $q\nu^o_p(T') \le \kappa^o_p(T)$.

 $(b)$ (Forward implication) Let us assume that $T \in \mathcal{QN}^o_p(V,W)$.
Consider the canonical complete quotient $Q_{W'} : Z_{W'} \to W'$, whose adjoint $Q_{W'}' : W'' \to Z_{W'}'$ is a completely isometric injection. Since $Z_{W'}'$ is an injective operator space, from the assumption we have a factorization
\[
\xymatrix{
V \ar[d] \ar[r]^T &W \ar[r]^{\iota_W} &W'' \ar[r]^{Q'_{W'}} &Z_{W'}'\\
\mathcal{S}_{\infty} \ar[rr]_{M(a,b)} & &\mathcal{S}_p \ar[ru] & 
}
\]
where $a,b\in \mathcal{S}_{2p}$.
Dualizing, we have
\[
\xymatrix{
Z_{W'}'' \ar[d] \ar[r]^{Q_{W'}''} &W''' \ar[r]^{\iota_W'} &W' \ar[r]^{T'}& V' \\
\mathcal S'_{p} \ar[rr]_{M(b,a)} & &\mathcal{S}_1 \ar[ru] & 
}
\]
Restricting to $Z_{W'} \subset Z_{W'}''$ we have 
\[
\xymatrix{
Z_{W'} \ar[d] \ar[r]^{Q_{W'}} &W' \ar[r]^{T'}& V' \\
\mathcal S'_{p} \ar[r]_{M(b,a)} & \mathcal{S}_1 \ar[ru] & 
}
\]
Note that in the above diagram we can replace $\mathcal{S}_p'$ by $\mathcal{S}_{p'}$: these spaces are equal when $1 < p \le \infty$, and for $p=1$ it follows from Lemma \ref{lemma:multiplication-operator-on-B(ell_2)}.
Therefore, $T' \in \mathcal{K}^o_p(W',V')$ by appealing to \cite[Prop. 3.8]{ChaDiGa-Operator-p-compact} again.
Once more, taking the infimum over all such factorizations for $Q_{W'}'{\iota_W}T$ yields $\kappa^o_p(T') \le q\nu_p^o(T)$.

$(c)$ If $T \in \mathcal{QN}^o_p(V,W)$, by the previous two parts we conclude $T'' \in \mathcal{QN}^o_p(V'',W'')$ and $q\nu_p^o(T'') \le q\nu_p^o(T)$.
Now suppose that $T'' \in \mathcal{QN}^o_p(V'',W'')$, and consider $j : W'' \to Z$ a completely isometric embedding of $W''$ into an injective operator space $Z$. Then $j \circ T''$ is completely $p$-nuclear, and hence so is $j \circ T'' \circ \iota_V = j \circ \iota_W \circ T$, which shows that $T \in \mathcal{QN}^o_p(V,W)$ since $j \circ \iota_W$ is a completely isometric embedding of $W$ into the injective operator space $Z$. Moreover, using the ideal property of $\nu_p^o(\cdot)$, 
\[
q\nu_p^o(T) =  \nu_p^o(j \circ \iota_W \circ T) = \nu_p^o(j \circ T'' \circ \iota_V) \le \nu_p^o(j \circ T'' ) \n{\iota_V}_{\cb} = q\nu_p^o(T'').
\]

$(b)$ (Reverse implication) If $T' \in \mathcal{K}^o_p(W',V')$, applying the previous parts we get $T'' \in \mathcal{QN}^o_p(V'',W'')$ and therefore $T \in \mathcal{QN}^o_p(V,W)$, with $q\nu_p^o(T) = q\nu_p^o(T'') \le \kappa_p^o(T')$.
\end{proof}

In the particular case where the codomain is complemented in its bidual we have the converse of Proposition \ref{prop:adjoint-of-p-compact} $(a)$.

\begin{proposition}\label{prop:adjoint-of-p-compact-finite-dimension}
Let $1 \le p \le \infty$.
Let $T : V \to W$ be a linear map.
Suppose that $W$ is completely contractively complemented in its bidual (in particular, if $W$ has finite dimension or is a dual space).
Then $T \in \mathcal{K}^o_p(V,W)$ if and only if $T' \in \mathcal{QN}^o_p(W',V')$, and moreover $\kappa^o_p(T) = q\nu^o_p(T')$.
\end{proposition}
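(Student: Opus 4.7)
The forward direction (and one of the two needed inequalities) is exactly the content of Proposition \ref{prop:adjoint-of-p-compact}$(a)$: if $T \in \mathcal{K}^o_p(V,W)$ then $T' \in \mathcal{QN}^o_p(W',V')$ with $q\nu^o_p(T') \le \kappa^o_p(T)$, and this needs no hypothesis on $W$. So the only thing to prove is the reverse implication together with the inequality $\kappa^o_p(T) \le q\nu^o_p(T')$.

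The plan is as follows. Suppose $T' \in \mathcal{QN}^o_p(W',V')$. Apply Proposition \ref{prop:adjoint-of-p-compact}$(b)$ with the roles of $V$ and $W$ played respectively by $W'$ and $V'$, and with the map being $T' : W' \to V'$. This yields that the bitranspose $T'' : V'' \to W''$ belongs to $\mathcal{K}^o_p(V'',W'')$, with the equality of norms
\[
\kappa^o_p(T'') = q\nu^o_p(T').
\]
At this point the hypothesis that $W$ is completely contractively complemented in $W''$ enters. Let $P : W'' \to W$ be a complete contraction satisfying $P \iota_W = \mathrm{id}_W$. The fundamental factorization identity $T'' \iota_V = \iota_W T$ then gives
\[
P \circ T'' \circ \iota_V = P \circ \iota_W \circ T = T,
\]
so $T$ is obtained by composing the operator $p$-compact map $T''$ with the complete contractions $\iota_V$ (on the right) and $P$ (on the left). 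Since $\mathcal{K}^o_p$ is a mapping ideal, this implies $T \in \mathcal{K}^o_p(V,W)$ with
\[
\kappa^o_p(T) \le \n{P}_{\cb}\, \kappa^o_p(T'') \,\n{\iota_V}_{\cb} \le \kappa^o_p(T'') = q\nu^o_p(T').
\]
Combining this with the inequality from Proposition \ref{prop:adjoint-of-p-compact}$(a)$ yields the equality $\kappa^o_p(T) = q\nu^o_p(T')$.

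For the parenthetical comment, finite-dimensional operator spaces are trivially completely contractively complemented in their biduals (since $W=W''$ completely isometrically), and a dual operator space $W=X'$ admits the complete contractive projection $(\iota_X)' : X''' \to X'$ onto $X'$, so both cases are covered by the hypothesis. No real obstacle arises in this proof: the hard work was already done in Proposition \ref{prop:adjoint-of-p-compact}, and the only new ingredient is using the projection $P$ to push the operator $p$-compactness from the bidual level back down to $W$.
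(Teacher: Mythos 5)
Your proposal is correct and follows essentially the same route as the paper: quote Proposition \ref{prop:adjoint-of-p-compact}$(a)$ for the forward direction, apply part $(b)$ to $T'$ to get $T''\in\mathcal{K}^o_p(V'',W'')$ with $\kappa_p^o(T'')\le q\nu^o_p(T')$, and then write $T=PT''\iota_V$ using the completely contractive projection $P:W''\to W$ together with the ideal property. The only (harmless) additions are your justification of the parenthetical cases, which the paper leaves implicit.
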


\begin{proof}
The ``only if'' implication is Proposition \ref{prop:adjoint-of-p-compact} $(a)$. Suppose now that  $T' \in \mathcal{QN}^o_p(W',V')$. By Proposition \ref{prop:adjoint-of-p-compact} $(b)$, $T'' \in \mathcal{K}^o_p(V'',W'')$ and $\kappa_p^o(T'') \le  q\nu^o_p(T')$.
If $P : W'' \to W$ is a completely contractive projection, then $T = PT''\iota_V$, so $T$ is operator $p$-compact and 
$\kappa_p^o(T) \le \n{P}_{\cb} \kappa_p^o(T'') \n{\iota_V}_{\cb} \le  q\nu^o_p(T')$, finishing the proof.
\end{proof}

Note that the case $p=\infty$ of Proposition \ref{prop:adjoint-of-p-compact-finite-dimension} generalizes the case of operator compact mappings with finite-dimensional codomain from \cite[Cor. 4.15]{Oikhberg-Thesis}. Specifically, it can be readily verified that for a mapping with a finite-dimensional domain, the $q\nu_\infty^o$ norm coincides with the norm of $\cb$-factorization through a subspace of $\mathcal{S}_\infty$, leading to the conclusion.

Up to this point, the only examples for which we had been able to calculate the operator $p$-compact norm are the multiplication mappings $M(a,b) : \mathcal{S}_{p'} \to \mathcal{S}_1$ with $a,b \in \mathcal{S}_{2p}$ (Example \ref{example-multiplication-is-operator-compact}), all of which take values in $\mathcal{S}_1$. Now, Proposition \ref{prop:adjoint-of-p-compact-finite-dimension} allows us to also consider multiplication mappings defined from $\mathcal{S}_1$ to $\mathcal{S}_q$, for certain values of $q$.

\begin{corollary}\label{cor-examples-p-compact}
Let $1\le p, q, r \le \infty$, $a,b \in \mathcal{S}_{2q'}$. In each of the following situations, we have ${\kappa_p^o(M(a,b) : \mathcal{S}_1 \to \mathcal{S}_q)} = \n{a}_{\mathcal{S}_{2r}} \n{b}_{\mathcal{S}_{2r}}$:
\begin{enumerate}[(a)]
\item $q \le p$, $\frac{1}{r} = \frac{1}{q'}+\frac2p$.
\item $\max\{2,p\} \le q$, $\frac{1}{r} = \frac1q+1$. 
\end{enumerate}
In particular, under the previous conditions, if either $a$ or $b$ belongs to $\mathcal{S}_{2q'} \setminus \mathcal{S}_{2r}$ and neither of them is zero, then $M(a,b) : \mathcal{S}_1 \to \mathcal{S}_q$ is not operator $p$-compact.
\end{corollary}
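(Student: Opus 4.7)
The plan is to apply the duality result of Proposition \ref{prop:adjoint-of-p-compact-finite-dimension}. Since $\mathcal{S}_q$ is a dual space whenever $q<\infty$ (namely $\mathcal{S}_q = (\mathcal{S}_{q'})'$, with $\mathcal{S}_1 = \mathcal{S}_\infty'$), it is completely contractively complemented in its bidual, and the proposition gives
\[\kappa_p^o(M(a,b):\mathcal{S}_1 \to \mathcal{S}_q) = q\nu_p^o(M(a,b)'),\]
where the adjoint $M(a,b)':\mathcal{S}_{q'} \to \mathcal{S}_1' = \mathcal{B}(\ell_2)$ is, up to transposition, itself a multiplication mapping by $a$ and $b$. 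Because $\mathcal{B}(\ell_2)$ is injective, $q\nu_p^o$ of this adjoint coincides with the completely $p$-nuclear norm $\nu_p^o$. The case $q=\infty$ in (a) needs a separate treatment, for example by considering $M(a,b)$ as taking values in $\mathcal{B}(\ell_2)$ (which is a dual space) and using that $\mathcal{S}_\infty \subset \mathcal{B}(\ell_2)$ completely isometrically.

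For the upper bound, given $a, b \in \mathcal{S}_{2r}$, I would use a noncommutative Hölder/polar decomposition (Remark \ref{remark-trick}) to split $a = a_1 \alpha$ and $b = \beta b_1$ with $\alpha, \beta \in \mathcal{S}_{2p}$ and $a_1, b_1$ in the Schatten class whose exponent is dictated by the relation between $r$, $p$ and $q$ from (a) or (b). By Lemma \ref{lemma-factorization-Pisier}, the adjoint then factors as
\[\mathcal{S}_{q'} \xrightarrow{M(b_1^t, a_1^t)} \mathcal{S}_\infty \xrightarrow{M(\beta^t, \alpha^t)} \mathcal{S}_p \xrightarrow{\iota} \mathcal{B}(\ell_2),\]
where the outermost multiplication is completely bounded by the noncommutative Hölder estimate \cite[Thm. 2.1]{Oikhberg} (exactly as used in the proof of Proposition \ref{prop-monotonicity-compactness}), the middle multiplication matches the prototype from Definition \ref{defn-completely-p-nuclear}, and the inclusion is a complete contraction. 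The product of the $\cb$-norms telescopes to $\n{a}_{\mathcal{S}_{2r}}\n{b}_{\mathcal{S}_{2r}}$, which provides the upper bound for $\nu_p^o(M(a,b)')$, and hence for $\kappa_p^o(M(a,b))$.

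For the lower bound, I would analyze an arbitrary completely $p$-nuclear factorization $\mathcal{S}_{q'} \xrightarrow{R} \mathcal{S}_\infty \xrightarrow{M(\alpha,\beta)} \mathcal{S}_p \xrightarrow{S} \mathcal{B}(\ell_2)$ of $M(a,b)'$. Evaluating the composition on matrix units reconstructs the entries of $a$ and $b$, and a duality argument between $\mathcal{S}_{2r}$ and its conjugate Schatten class, combined with the specific exponent relations in (a) and (b), converts the bound $\n{R}_{\cb}\n{\alpha}_{\mathcal{S}_{2p}}\n{\beta}_{\mathcal{S}_{2p}}\n{S}_{\cb}$ into the desired lower bound $\n{a}_{\mathcal{S}_{2r}}\n{b}_{\mathcal{S}_{2r}}$. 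Taking the infimum over all such factorizations yields $\nu_p^o(M(a,b)') \geq \n{a}_{\mathcal{S}_{2r}}\n{b}_{\mathcal{S}_{2r}}$.

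The in-particular statement follows as an immediate contrapositive: if $a \in \mathcal{S}_{2q'}\setminus\mathcal{S}_{2r}$ and $b \neq 0$, then $\n{a}_{\mathcal{S}_{2r}}\n{b}_{\mathcal{S}_{2r}} = \infty$, so the equality forces $\kappa_p^o(M(a,b)) = \infty$, meaning $M(a,b)$ cannot be operator $p$-compact. The main obstacle is expected to be the lower bound, since extracting the exact $\mathcal{S}_{2r}$-norms of $a$ and $b$ from an abstract completely $p$-nuclear factorization requires a careful duality and Hölder-exponent bookkeeping that must be tailored to each of the two regimes (a) and (b).
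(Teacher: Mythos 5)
Your reduction is the same as the paper's: dualize via Proposition \ref{prop:adjoint-of-p-compact-finite-dimension} and use injectivity of the ambient space to replace $q\nu_p^o$ by $\nu_p^o$, so that everything hinges on computing the completely $p$-nuclear norm of the adjoint multiplication map. (The paper sidesteps your worry about whether $\mathcal{S}_q$ is complemented in its bidual by first passing to the finite-dimensional maps $M(a,b):\mathcal{S}_1^n\to\mathcal{S}_q^n$, where the codomain $M_n$ is both trivially complemented in its bidual and injective, and then approximating; this also covers $q=\infty$ uniformly. Your fallback of enlarging the codomain to $\mathcal{B}(\ell_2)$ is not obviously harmless, since $\mathcal{K}_p^o$ is not known to be injective.) At that point the paper simply quotes the exact computation $\nu_p^o(M(b,a):\mathcal{S}_{q'}^n\to M_n)=\n{a}_{\mathcal{S}_{2r}}\n{b}_{\mathcal{S}_{2r}}$ under (a) or (b) from \cite[Thm. 3.1]{Oikhberg}; you instead attempt to prove it, and that is where the genuine gaps lie.

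For your upper bound, the first leg of the proposed factorization, $M(b_1^t,a_1^t):\mathcal{S}_{q'}\to\mathcal{S}_\infty$, cannot be completely bounded when $q>1$: by the very H\"older estimate you invoke (\cite[Thm. 2.1]{Oikhberg}, as used in Proposition \ref{prop-monotonicity-compactness}), a multiplication map $\mathcal{S}_u\to\mathcal{S}_v$ requires $1/v-1/u=1/s\ge 0$ with the symbols in $\mathcal{S}_{2s}$, and here $1/\infty-1/q'<0$; nor can you take $a_1,b_1$ merely bounded, since the formal identity $\mathcal{S}_{q'}\to\mathcal{S}_\infty$ is itself not completely bounded. (A sanity check: if that leg did work with $a_1,b_1$ controlled only in operator norm, optimizing would give $\nu_p^o\le\n{a}_{\mathcal{S}_{2p}}\n{b}_{\mathcal{S}_{2p}}$, which is generically strictly smaller than the claimed value since $r<p$ in case (a) --- contradicting the asserted equality.) So the factorization must be organized differently, and this is exactly where hypotheses (a)/(b) have to enter. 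The lower bound --- which you yourself flag as the main obstacle --- is not argued beyond ``a careful duality and H\"older-exponent bookkeeping.'' Since the entire content of the corollary, including the role of (a) and (b), lives in that computation, the proposal is a reduction to the hard part rather than a proof of it; only the contrapositive for the ``in particular'' clause is complete as written.
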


\begin{proof}
It will suffice to check the finite-dimensional case, and the general one will follow by approximation.
From Proposition \ref{prop:adjoint-of-p-compact-finite-dimension},
\[
\kappa_p^o(M(a,b) : \mathcal{S}_1^n \to \mathcal{S}_q^n) = q\nu^o_p(M(b,a): \mathcal{S}_{q'}^n \to M_n) =  \nu^o_p(M(b,a): \mathcal{S}_{q'}^n \to M_n), 
\]
so the conclusion now follows from \cite[Thm. 3.1]{Oikhberg}.
\end{proof}

Finally, we show that the monotonicity relation given by Proposition \ref{prop-monotonicity-compactness} is strict.

\begin{corollary} \label{cor: p-not-q}
Let $1 \le q < p \le \infty$. Then there exists a linear mapping which is operator $p$-compact but not operator $q$-compact (and therefore, there exists a relatively operator $p$-compact matrix set which is not relatively operator $q$-compact).
\end{corollary}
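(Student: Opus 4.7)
The plan is to produce an explicit example as a multiplication map from $\mathcal{S}_1$ to $\mathcal{S}_1$, leveraging the very precise computation of operator $p$-compact norms already obtained in Corollary \ref{cor-examples-p-compact}. The whole point of that corollary is that it gives both an \emph{upper} bound (so we can certify $p$-compactness) and a \emph{lower} bound via the ``in particular'' clause (so we can certify the \emph{failure} of $q$-compactness), and the bounds only differ in the Schatten exponent $2r$, where $1/r$ depends monotonically on the compactness index. This mismatch is exactly what we want to exploit.

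Concretely, I would specialize Corollary \ref{cor-examples-p-compact}(a) to the case where the target Schatten exponent (the ``$q$'' in that corollary) equals $1$, so that $q' = \infty$ and hence $\frac{1}{r} = \frac{2}{s}$ for compactness index $s$, i.e.\ $2r = s$. In this setup the corollary reads: for $a,b \in \mathcal{S}_\infty$, the map $M(a,b):\mathcal{S}_1 \to \mathcal{S}_1$ satisfies $\kappa_s^o(M(a,b)) = \|a\|_{\mathcal{S}_s}\|b\|_{\mathcal{S}_s}$, and if $a$ or $b$ lies in $\mathcal{S}_\infty\setminus \mathcal{S}_s$ (with neither zero) then $M(a,b)$ fails to be operator $s$-compact. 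Applying this twice, once with $s=p$ (to get $p$-compactness) and once with $s=q$ (to get the failure of $q$-compactness), reduces the problem to finding a single compact operator $a$ which lies in $\mathcal{S}_p \setminus \mathcal{S}_q$.

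The remaining step is routine: since $q < p$, we can pick a diagonal operator $a$ on $\ell_2$ with singular values $\lambda_n = n^{-1/q}$ for $n\ge 1$. Then $\sum \lambda_n^q = \sum 1/n = \infty$, so $a \notin \mathcal{S}_q$, while for $p<\infty$ we have $\sum \lambda_n^p = \sum n^{-p/q} < \infty$ (since $p/q > 1$), giving $a \in \mathcal{S}_p$; for $p = \infty$ one only needs $\lambda_n \to 0$, which is also clear. Taking $b = a$ and $T = M(a,a) : \mathcal{S}_1 \to \mathcal{S}_1$ then gives the required linear map. The parenthetical conclusion about matrix sets is then immediate: $T(\mathbf{B}_{\mathcal{S}_1})$ is relatively operator $p$-compact by definition of $\kappa_p^o$, but cannot be relatively operator $q$-compact, since this would force $T$ itself to be operator $q$-compact.

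The only genuine ``obstacle'' is making sure the hypotheses of Corollary \ref{cor-examples-p-compact}(a) are met at both indices $s=p$ and $s=q$ simultaneously, which is why the choice of target $\mathcal{S}_1$ (the smallest possible) is convenient: the condition ``$q \le p$'' of that corollary becomes ``$1 \le p$'' and ``$1 \le q$'', both automatic, and the condition ``$a,b \in \mathcal{S}_{2q'}$'' collapses to the harmless ``$a,b \in \mathcal{S}_\infty$''. Thus the construction works uniformly across the entire range $1 \le q < p \le \infty$.
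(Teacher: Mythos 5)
Your proof is correct and is essentially the same argument as the paper's: both apply Corollary \ref{cor-examples-p-compact}(a) twice to a single multiplication map $M(a,b)$ (once at compactness index $p$ to certify membership, once at index $q$ to certify failure via the ``in particular'' clause), with $a,b$ chosen in the gap between the two resulting Schatten classes. The only difference is cosmetic: the paper targets $\mathcal{S}_q$ and needs $a,b\in\mathcal{S}_{2r_p}\setminus\mathcal{S}_{2r_q}$, whereas your choice of target $\mathcal{S}_1$ collapses the exponents to the cleaner condition $a\in\mathcal{S}_p\setminus\mathcal{S}_q$, realized by your explicit diagonal operator.
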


\begin{proof}
Let $\frac{1}{r_q} = 1+\frac1q$ and $\frac{1}{r_p} = \frac{1}{q'}+\frac{2}{p}$.
Since $q<p$, an easy calculation shows $r_p>r_q$.
Let $a,b \in \mathcal{S}_{2r_p} \setminus \mathcal{S}_{2r_q}$, and consider $M(a,b):\mathcal{S}_1 \to \mathcal{S}_q$. By Corollary \ref{cor-examples-p-compact} this is an operator $p$-compact mapping, but not operator $q$-compact.
\end{proof}

\section{Operator weakly $p$-compact matrix sets and mappings}\label{sect-weakly-p-compact}

In the classical setting, just as there exists the notion of $p$-compactness for sets and mappings, there is also a weaker notion called weak $p$-compactness (see e.g., \cite[Def. 2.3 and 2.4]{Sinha-Karn}). We delve into their analogous version within our framework.
We first need to recall the non-commutative counterpart of the sequence space $\ell_p^w(X)$ which was introduced in \cite{ChaDiGa-Operator-p-compact}:  
$$ S_p^w[V] := \left\{ v=(v_{ij})_{i, j =1}^{\infty}  : \forall i,j, \; v_{ij}\in V \; \mbox{and} \;  \sup_N \left\| (v_{ij})_{i,j=1}^N \right\|_{S_p^N \widehat{\otimes}_{\min} V} < \infty \right\}. $$  
Equipped with the matricial norm structure defined by

$$ \Vert  \big ((v_{ij}^{kl})_{i,j} \big)_{k,l=1}^n  \Vert_{M_n(S_{p}^w[V])} := \sup_{N} \Vert \big((v_{ij}^{kl})_{i,j=1}^N \big)_{k,l=1}^{n} \Vert_{M_n(S_p^N \widehat{\otimes}_{\min} V) },$$
this defines an operator space. As shown in \cite[Lem. 2.4]{ChaDiGa-Operator-p-compact}, the space $S_p^w[V]$ is completely isometric to $\CB(\mathcal{S}_{p'},V)$ under the identification $v \mapsto \Theta^{v}$.

\begin{definition}
Let $1 \le p \le \infty$.
 A matrix set $\mathbf{K}=(K_n)$ over $V$ is called \emph{relatively operator weakly $p$-compact} if there exists $v \in \mathcal{S}_p^w[V]$ such that $\mathbf{K}\subset \Theta^v(\mathbf{B}_{\mathcal{S}_p'})$. In that case  we define
\[
\frak{ m}^{w,o}_p(\mathbf{K}) = \inf\left\{\| v\|_{ \mathcal{S}_p^w[V] } \;:\; v\in  \mathcal{S}_p^w[V], \; \mathbf{K}\subset \Theta^v(\mathbf{B}_{\mathcal{S}_p'}) \right\}.
\]
A linear map $T : V \to W$ is called \emph{operator weakly $p$-compact} if the matrix set $T(\mathbf{B}_V)$ is relatively operator weakly $p$-compact, and in this case we define the operator weakly $p$-compact norm of $T$ by
\[
\omega_p^o(T) = \frak{ m}^{w,o}_p\left( T(\mathbf{B}_V) \right)
\]
We denote by $\mathcal{W}_p^o(V,W)$ the set of all operator weakly $p$-compact maps from $V$ to $W$.
\end{definition}

We emphasize that, in contrast with all the other classes of mappings considered in this paper, $\mathcal{W}_p^o$ is not known to be a mapping ideal since we have not been able to identify a natural operator space structure for it. Nevertheless, it is at least normed and satisfies the ideal property (see Proposition \ref{prop-W_p^o-ideal-property} below).

It is immediate from the definitions that every operator $p$-compact mapping is operator weakly $p$-compact, and moreover the inclusion $\mathcal{K}_p^o(V,W) \to \mathcal{W}_p^o(V,W)$ is contractive. 

In analogy with the comment after the proof of Lemma \ref{lemma-Theta-well-defined}, the relatively weakly operator $p$-compactness of $\mathbf{K}$ can be alternatively expressed as $\mathbf{K}$ being a subset of $\overline{\mathbf{co}_p(v)}$ for some $v \in \mathcal{S}_p^w[V]$ with $\frak{ m}^{w,o}_p(\mathbf{K}) = \inf\left\{\| v\|_{ \mathcal{S}_p^w[V] } \;:\; v\in  \mathcal{S}_p^w[V], \; \mathbf{K}\subset \overline{\mathbf{co}_p(v)}\} \right\}.$

Since $\mathcal{S}_p^w[V] = \CB(\mathcal{S}_{p'}, V)$, we can equivalently say that a matrix set $\mathbf{K}$ over $V$  is relatively operator weakly $p$-compact if there exists a mapping $\Theta\in \CB(\mathcal{S}_{p'}, V)$ such that $\mathbf{K}\subseteq \Theta(\mathbf{B}_{\mathcal{S}_{p'}})$ and we have
\begin{equation}\label{eqn-omegap-mas-simple}
\frak{ m}^{w,o}_p(\mathbf{K}) = \inf\left\{ \n{\Theta}_{\cb} \;:\; \Theta\in\CB(\mathcal{S}_{p'},V), \; \mathbf{K}\subseteq \Theta(\mathbf{B}_{\mathcal{S}_{p'}}) \right\}.
\end{equation}
It now follows obviously that any $\Theta \in \CB(\mathcal{S}_{p'},W)$ satisfies $\omega^o_p(\Theta) \le \n{\Theta}_{\cb}$. As the reverse inequality always holds, we conclude that $\mathcal{W}_p^o(\mathcal{S}_{p'},W) = \CB(\mathcal{S}_{p'},W)$ isometrically.

Comparing with Yew's definition of $\mathcal{S}_p$-compact mappings (see Definition \ref{defn-Yew-compact}) it becomes evident that such mappings are weakly $p$-compact, given that $\mathcal{S}_p\widehat\otimes_{\min} V \subset \mathcal{S}_p^w[V]$.

In the Banach space setting, $p$-compact and weakly $p$-compact maps have very similar factorizations \cite[Thms. 3.1 and 3.2]{Sinha-Karn}.
We already have factorizations for operator $p$-compact maps, so one would expect something similar to hold for operator weakly $p$-compact maps. Theorem \ref{thm-factorization-weakly-p-compact} below provides this and should be compared to \cite[Eqn. (10)]{ChaDiGa-Operator-p-compact}.

Recall from \cite[Sec. 7.2]{Pisier-Asterisque} that $\Gamma_{\mathcal{S}_p}(V,W)$ denotes the space of completely bounded linear maps $T: V \to W$ admitting a factorization of the form $T = ba$ with $a\in\CB(V,\mathcal{S}_p)$ and $b\in\CB(\mathcal{S}_p,W)$, with the norm $\gamma_{\mathcal{S}_p}(T) = \inf \{ \n{a}_{\cb} \n{b}_{\cb}\}$ where the infimum is taken over all such factorizations. Moreover, $\Gamma_{\mathcal{S}_p}(V,W)$ is complete when $W$ is.

\begin{theorem}\label{thm-factorization-weakly-p-compact}
A linear map $T : V \to W$ is operator weakly $p$-compact if and only if there exist an operator space $G$, and mappings $\Theta \in \Gamma_{\mathcal{S}_{p'}}(G, W)$ and $R \in\CB(G,G/\ker(\Theta))$ such that the following diagram commutes
\begin{equation}\label{eqn-factorization-weakly-p-compact}
\xymatrix{
V \ar[r]^T\ar[rd]_R    & W  & \ar[l]_{\Theta } \ar@{->>}[ld]^{\pi}  G \\
 &   G/\ker\Theta  \ar[u]^{\widetilde{\Theta}},  &
}.
\end{equation}
where $\pi$ and $\widetilde{\Theta}$  stand for the natural $1$-quotient mapping and the natural monomorphism associated to $\Theta$, respectively.
Moreover, in this case $\omega_p^o(T)$ is equal to the infimum of $\n{R}_{\cb}\gamma_{\mathcal{S}_{p'}}(\Theta)$ over all such factorizations.
Additionally, one can consider only factorizations with $G=\mathcal{S}_{p'}$.
\end{theorem}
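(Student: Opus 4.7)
My plan is to exploit the isometric identification $\mathcal{S}_p^w[W] = \CB(\mathcal{S}_{p'}, W)$ from \cite[Lem. 2.4]{ChaDiGa-Operator-p-compact} and then run a standard quotient-by-the-kernel argument. This will simultaneously give the existence of the factorization, the norm equality, and the additional statement that $G=\mathcal{S}_{p'}$ always suffices.

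For the forward direction, given $T:V\to W$ operator weakly $p$-compact, for $\varepsilon>0$ I would select $v\in\mathcal{S}_p^w[W]$ with $T(\mathbf{B}_V)\subseteq\Theta^v(\mathbf{B}_{\mathcal{S}_{p'}})$ and $\|v\|_{\mathcal{S}_p^w[W]}\le(1+\varepsilon)\omega_p^o(T)$. Setting $G=\mathcal{S}_{p'}$ and $\Theta=\Theta^v$, the associated monomorphism $\widetilde{\Theta}:\mathcal{S}_{p'}/\ker\Theta\to W$ is injective with the same range as $\Theta$, so $T(V)$ lies in $\widetilde\Theta(\mathcal{S}_{p'}/\ker\Theta)$ and I can well-define $R=\widetilde\Theta^{-1}\circ T$, which is linear by uniqueness. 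For $x\in \mathbf{B}_{M_n(V)}$, by hypothesis there exists $y\in\mathbf{B}_{M_n(\mathcal{S}_{p'})}$ with $\Theta_n(y)=T_n(x)$, and injectivity of $\widetilde\Theta_n$ forces $R_n(x)=\pi_n(y)\in\mathbf{B}_{M_n(\mathcal{S}_{p'}/\ker\Theta)}$, using that $\pi$ is completely contractive; hence $\|R\|_{\cb}\le 1$. Since the trivial factorization $\Theta=\mathrm{id}_{\mathcal{S}_{p'}}\circ\Theta$ yields $\gamma_{\mathcal{S}_{p'}}(\Theta)\le\|\Theta\|_{\cb}=\|v\|_{\mathcal{S}_p^w[W]}$, letting $\varepsilon\to 0$ gives that the infimum of $\|R\|_{\cb}\gamma_{\mathcal{S}_{p'}}(\Theta)$ is at most $\omega_p^o(T)$.

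For the reverse direction, suppose $T=\widetilde\Theta\circ R$ with $\Theta\in\Gamma_{\mathcal{S}_{p'}}(G,W)$, and for $\varepsilon>0$ fix $\Theta=\beta\circ\alpha$ with $\alpha\in\CB(G,\mathcal{S}_{p'})$, $\beta\in\CB(\mathcal{S}_{p'},W)$ and $\|\alpha\|_{\cb}\|\beta\|_{\cb}\le(1+\varepsilon)\gamma_{\mathcal{S}_{p'}}(\Theta)$. For $x\in\mathbf{B}_{M_n(V)}$, using that $\pi$ is a complete $1$-quotient I can lift $R_n(x)$ to some $y\in M_n(G)$ with $\pi_n(y)=R_n(x)$ and $\|y\|\le(1+\varepsilon)\|R\|_{\cb}$; commutativity of the diagram then yields $T_n(x)=\Theta_n(y)=\beta_n(\alpha_n(y))$, which sits in $(1+\varepsilon)\|R\|_{\cb}\|\alpha\|_{\cb}\beta_n(\mathbf{B}_{M_n(\mathcal{S}_{p'})})$. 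Hence $\omega_p^o(T)\le(1+\varepsilon)^2\|R\|_{\cb}\gamma_{\mathcal{S}_{p'}}(\Theta)$; sending $\varepsilon\to 0$ and taking the infimum over all such factorizations of $T$ completes the matching inequality.

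There is no deep obstacle here. The only delicate points are (i) confirming complete boundedness of $R$ in the forward direction, which is entirely driven by the complete contractivity of the quotient $\pi$, and (ii) performing the matrix-level lifting through $\pi$ in the reverse direction, which is the standard $(1+\varepsilon)$-approximate openness of a complete $1$-quotient. All remaining work amounts to bookkeeping between the two nested layers of infima (over factorizations of $T$, and over Haagerup-style factorizations of $\Theta$).
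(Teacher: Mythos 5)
Your proposal is correct and follows essentially the same route as the paper's proof: the forward direction is the same quotient-by-the-kernel construction of $R$ (with complete contractivity of $\pi$ giving $\n{R}_{\cb}\le 1$), and the reverse direction uses the same $(1+\varepsilon)$-approximate lifting through the complete $1$-quotient combined with the factorization $\Theta=\beta\alpha$ through $\mathcal{S}_{p'}$. The bookkeeping of the norms matches the paper's inequalities, so nothing is missing.
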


\begin{proof}
Suppose that $T : V \to W$ admits a factorization as in \eqref{eqn-factorization-weakly-p-compact}, and factor 
 $\Theta = ba$ with $a\in\CB(G,\mathcal{S}_{p'})$ and $b\in\CB(\mathcal{S}_{p'},W)$.
Then for every $\varepsilon>0$,
\begin{multline*}
T(\mathbf{B}_V) = \widetilde{\Theta} R (\mathbf{B}_V) \subseteq \n{R}_{\cb} \widetilde{\Theta}(\mathbf{B}_{G/\ker(\Theta)}) \subseteq  
\n{R}_{\cb} (1+\varepsilon) \widetilde{\Theta} \pi (\mathbf{B}_{G}) \\
=
\n{R}_{\cb} (1+\varepsilon)\Theta(\mathbf{B}_{G}) = 
\n{R}_{\cb} (1+\varepsilon)ba(\mathbf{B}_{G}) \subseteq 
\n{R}_{\cb} \n{a}_{\cb} (1+\varepsilon)b(\mathbf{B}_{\mathcal{S}_{p'}})
\end{multline*}
and therefore $T$ is weakly $p$-compact with $\omega^o_p(T) \le \n{R}_{\cb} \gamma_{\mathcal{S}_{p'}}(\Theta)$, so that $\omega_p^o(T)$ is less than or equal to the infimum in the statement.

Suppose now that $T : V \to W$ is weakly $p$-compact.
This means that $T(\mathbf{B}_V)$ is weakly $p$-compact, so by \eqref{eqn-omegap-mas-simple} there exists $\Theta \in \CB(\mathcal{S}_{p'}, W)$ such that $T(\mathbf{B}_V) \subseteq \Theta(\mathbf{B}_{\mathcal{S}_{p'}})$.
Define $R : V \to \mathcal{S}_{p'}/\ker\Theta$ by $Rv = [u]$ where $u \in \mathcal{S}_{p'}$ satisfies $\Theta u = Tv$.
First observe that such $u$ must exist because the range of $T$ is contained in the range of $\Theta$. Moreover, this is well defined: if $\Theta u_1 = \Theta u_2 = Tv$, obviously $[u_1] = [u_2]$. Linearity of $R$ is clear from the linearity of $T$ and $\Theta$.
To see that $R$ is completely bounded, note that if $v \in B_{M_n(V)}$, we can find $u \in B_{M_n(\mathcal{S}_{p'})}$ such that $T_n v = \Theta_n u$, and therefore
$R_n v = [u]$, so $\n{R_nv} \le 1$ and thus $R$ is a complete contraction.
We have thus found a factorization as in \eqref{eqn-factorization-weakly-p-compact} with $G=\mathcal{S}_{p'}$, and in this case $\n{R}_{\cb}\gamma_{\mathcal{S}_{p'}}(\Theta) \le \n{\Theta}_{\cb}$, so the infimum in the statement is in fact equal to $\omega_p^o(T)$.
\end{proof}

As expected we have the following result.

\begin{proposition}\label{prop-W_p^o-ideal-property}
$\omega^o_p$ is a  norm on  $\mathcal{W}_p^o(V,W)$ satisfying the ideal property.   
\end{proposition}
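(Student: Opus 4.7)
My plan is to work entirely with the characterization~\eqref{eqn-omegap-mas-simple}, writing
\[
\omega_p^o(T) \;=\; \inf\bigl\{\n{\Theta}_{\cb}:\Theta\in\CB(\mathcal{S}_{p'},W),\; T(\mathbf{B}_V)\subseteq\Theta(\mathbf{B}_{\mathcal{S}_{p'}})\bigr\},
\]
which reduces all assertions about $\omega_p^o$ to statements about $\cb$-norms of mediator maps out of $\mathcal{S}_{p'}$. The \emph{ideal property} then follows immediately: given $R\in\CB(V_0,V)$, $T\in\mathcal{W}_p^o(V,W)$, $S\in\CB(W,W_0)$, and any witness $\Theta$ for $T$, the map $\n{R}_{\cb}S\Theta\in\CB(\mathcal{S}_{p'},W_0)$ witnesses $STR$ and has $\cb$-norm at most $\n{R}_{\cb}\n{S}_{\cb}\n{\Theta}_{\cb}$. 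The easy norm properties are just as quick: \emph{positive homogeneity} from $(\lambda\Theta)(\mathbf{B}_{\mathcal{S}_{p'}})=\lambda\Theta(\mathbf{B}_{\mathcal{S}_{p'}})$ together with $\n{\lambda\Theta}_{\cb}=|\lambda|\n{\Theta}_{\cb}$, and \emph{definiteness} from the observation $\Theta(\mathbf{B}_{\mathcal{S}_{p'}})\subseteq\n{\Theta}_{\cb}\mathbf{B}_W$, which gives $\n{T}_{\cb}\le\omega_p^o(T)$.

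The heart of the argument is the \emph{triangle inequality}. Given $T_1,T_2\in\mathcal{W}_p^o(V,W)$ and $\varepsilon>0$, I select witnesses $\Theta_i\in\CB(\mathcal{S}_{p'},W)$ with $a_i:=\n{\Theta_i}_{\cb}\le\omega_p^o(T_i)+\varepsilon$, and aim to produce a single mediator $\Theta\in\CB(\mathcal{S}_{p'},W)$ with $\n{\Theta}_{\cb}\le a_1+a_2$ witnessing $T_1+T_2$. Realizing $\mathcal{S}_{p'}=\mathcal{S}_{p'}(\ell_2)$ with $\ell_2=\ell_2\oplus\ell_2$, let $P_1,P_2\in\mathcal{B}(\ell_2)$ be the orthogonal projections onto the two summands, and choose weights $\lambda_i=(a_i/(a_1+a_2))^{1/p'}$, so that $\lambda_1^{p'}+\lambda_2^{p'}=1$. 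Define
\[
\Theta(X) \;=\; \tfrac{1}{\lambda_1}\Theta_1(P_1XP_1)\;+\;\tfrac{1}{\lambda_2}\Theta_2(P_2XP_2).
\]
To bound $\n{\Theta}_{\cb}$, at each matrix level I combine the triangle inequality, complete boundedness of the $\Theta_i$, and the discrete H\"older inequality with exponents $(p,p')$:
\[
\n{\Theta_n X}_{M_n(W)} \le \bigl[(a_1/\lambda_1)^p+(a_2/\lambda_2)^p\bigr]^{1/p}\cdot\bigl[\n{P_1XP_1}^{p'}_{M_n(\mathcal{S}_{p'})}+\n{P_2XP_2}^{p'}_{M_n(\mathcal{S}_{p'})}\bigr]^{1/p'}.
\]
A matrix-valued Clarkson-type inequality controls the second factor by $\n{X}_{M_n(\mathcal{S}_{p'})}$, and with the given $\lambda_i$ the first factor collapses exactly to $a_1+a_2$ (using $1+p/p'=p$), yielding $\n{\Theta}_{\cb}\le a_1+a_2$.

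For the required inclusion, given $v\in B_{M_n(V)}$ I pick $\beta^{(i)}\in B_{M_n(\mathcal{S}_{p'})}$ with $(T_i)_nv=(\Theta_i)_n\beta^{(i)}$ and form the block-diagonal element $\alpha\in M_n(\mathcal{S}_{p'})$ whose diagonal $P_i$-block is $\lambda_i\beta^{(i)}$. The same Clarkson-type additivity yields $\n{\alpha}^{p'}_{M_n(\mathcal{S}_{p'})}\le\lambda_1^{p'}+\lambda_2^{p'}=1$, and a direct computation verifies $\Theta_n\alpha=(T_1+T_2)_nv$. Letting $\varepsilon\to 0$ completes the proof.

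The principal obstacle is the matrix-valued Clarkson-type inequality invoked twice above: as the upper bound $\n{P_1XP_1}^{p'}+\n{P_2XP_2}^{p'}\le\n{X}^{p'}$ in $M_n(\mathcal{S}_{p'})$ (for the norm estimate) and as the $p'$-additivity identity for orthogonal block-diagonal elements (for the inclusion). Both reduce, via Pisier's noncommutative vector-valued $L_{p'}$ framework, to the complete contractivity of the block-diagonal conditional expectation on $M_n(\mathcal{S}_{p'})$ together with the isometric embedding of the operator-space $\ell_{p'}$-sum into $\mathcal{S}_{p'}$, and justifying these carefully is the main technical step of the argument.
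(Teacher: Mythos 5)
Your handling of the ideal property, homogeneity and definiteness via \eqref{eqn-omegap-mas-simple} is fine, but the triangle inequality---which you correctly single out as the heart of the matter---contains a genuine gap: the superadditivity half of your ``matrix-valued Clarkson-type inequality'' is false at matrix levels $n \ge 2$. The operator-space $\ell_{p'}$-sum is an $\ell_{p'}$-sum only at the first matrix level; at level $n$ the norm of a block-diagonal element $Y_1 \oplus Y_2$ dominates $\max(\n{Y_1}_{M_n},\n{Y_2}_{M_n})$ but in general \emph{not} $\big(\n{Y_1}_{M_n}^{p'}+\n{Y_2}_{M_n}^{p'}\big)^{1/p'}$, so complete contractivity of the block-diagonal conditional expectation does not deliver what you need. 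Concretely, take $p=\infty$ (so $p'=1$), split $\ell_2 = H_1 \oplus H_2$, pick norm-one elements $a \in \mathcal{S}_1(H_1)$, $b \in \mathcal{S}_1(H_2)$, and set $X = E_{11}\otimes(a\oplus 0) + E_{22}\otimes(0\oplus b) \in M_2(\mathcal{S}_1)$. Then $P_1XP_1 = E_{11}\otimes(a\oplus 0)$ and $P_2XP_2 = E_{22}\otimes(0\oplus b)$ each have norm $1$ in $M_2(\mathcal{S}_1) = M_2 \otimes_{\min}\mathcal{S}_1$; on the other hand, under the identification $M_2(\mathcal{S}_1) = \CB(\mathcal{S}_\infty,M_2)$ the element $X$ is a map into the diagonal subalgebra $\min(\ell_\infty^2) \subseteq M_2$ whose two coordinate functionals have norm one, so $\n{X}_{M_2(\mathcal{S}_1)} = 1$. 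Hence $\n{P_1XP_1} + \n{P_2XP_2} = 2 > 1 = \n{X}$, and applying your estimate to this (already block-diagonal) $X$ shows that the argument only yields $\n{\Theta} \le a_1 + a_2$ for the ordinary norm, not the $\cb$-norm. The inclusion step suffers from the companion (subadditivity) issue: you need $\n{\alpha}_{M_n(\mathcal{S}_{p'})} \le (\lambda_1^{p'}+\lambda_2^{p'})^{1/p'}$ for the block-diagonal $\alpha$, which again is not a consequence of the Banach-level $p'$-additivity.

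The paper sidesteps this entirely: it deduces the triangle inequality from the factorization characterization of Theorem \ref{thm-factorization-weakly-p-compact}, repeating the argument of \cite[Prop.~3.3]{ChaDiGa-Operator-p-compact} with $\Gamma_{\mathcal{S}_{p'}}$ in place of $\mathcal{N}^p_o$; there the two factorization diagrams are combined and one exploits the triangle inequality already available for the $\gamma_{\mathcal{S}_{p'}}$-norm, rather than trying to realize the sum of two mediators as a single completely bounded map on $\mathcal{S}_{p'}$ with additive control of the $\cb$-norm. Your construction mimics the classical Banach-space proof, which relies on scalar H\"older and $\ell_{p'}$-additivity of block sums---precisely the facts that do not survive amplification. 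To repair your route you would have to replace the H\"older/Clarkson step by an estimate valid at all matrix levels, which is exactly the difficulty the factorization approach is designed to avoid.
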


\begin{proof}
The ideal property is clear, and so is homogeneity.
The triangle inequality is proved in exactly the same way as in \cite[Prop. 3.3]{ChaDiGa-Operator-p-compact}, since the factorizations in \eqref{eqn-factorization-weakly-p-compact} have exactly the same form as those in \cite[Def. 3.2]{ChaDiGa-Operator-p-compact} with the only difference that $\mathcal{N}_o^p$ is replaced by $\Gamma_{\mathcal{S}_{p'}}$.
\end{proof}

The following two propositions have proofs analogous to those of \cite[Prop. 3.7 and 3.8]{ChaDiGa-Operator-p-compact}.

\begin{proposition}
Let $V$ be a projective operator space. Then, $T \in  \mathcal{W}_p^o(V,W)$ if and only if $T \in \Gamma_{\mathcal{S}_{p'}}(V,W)$ and $\omega_p^o(T) = \gamma_{\mathcal{S}_{p'}}(T)$. 
\end{proposition}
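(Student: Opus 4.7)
The plan is to prove both inequalities between $\omega_p^o(T)$ and $\gamma_{\mathcal{S}_{p'}}(T)$ for $T : V \to W$, noting that the projectivity hypothesis on $V$ will play a role only in one direction.

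The easy direction holds for any operator space $V$: if $T = ba$ with $a \in \CB(V,\mathcal{S}_{p'})$ and $b\in\CB(\mathcal{S}_{p'},W)$, then $T(\mathbf{B}_V) \subseteq \n{a}_{\cb}\,b(\mathbf{B}_{\mathcal{S}_{p'}})$, so by \eqref{eqn-omegap-mas-simple} applied to the map $\n{a}_{\cb}\,b$ we obtain $\omega_p^o(T) \le \n{a}_{\cb}\n{b}_{\cb}$. Taking the infimum over all such factorizations yields $\omega_p^o(T) \le \gamma_{\mathcal{S}_{p'}}(T)$, and in particular $T \in \mathcal{W}_p^o(V,W)$ whenever $T \in \Gamma_{\mathcal{S}_{p'}}(V,W)$.

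For the reverse direction, suppose $T \in \mathcal{W}_p^o(V,W)$. By Theorem \ref{thm-factorization-weakly-p-compact} applied with the choice $G = \mathcal{S}_{p'}$, for each $\varepsilon > 0$ there exist $\Theta \in \CB(\mathcal{S}_{p'},W)$ and $R \in \CB(V, \mathcal{S}_{p'}/\ker\Theta)$ such that $T = \widetilde{\Theta}R$ and $\n{R}_{\cb}\n{\Theta}_{\cb} \le (1+\varepsilon)\omega_p^o(T)$; here we use that $\gamma_{\mathcal{S}_{p'}}(\Theta) = \n{\Theta}_{\cb}$ since $\Theta$ already has domain $\mathcal{S}_{p'}$. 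Now comes the key step, which is where projectivity enters: the quotient map $\pi : \mathcal{S}_{p'} \to \mathcal{S}_{p'}/\ker\Theta$ is a complete $1$-quotient, and since $V$ is projective, for any $\varepsilon'>0$ there exists a lifting $\widetilde{R} : V \to \mathcal{S}_{p'}$ satisfying $\pi\widetilde{R} = R$ and $\n{\widetilde{R}}_{\cb} \le (1+\varepsilon')\n{R}_{\cb}$. Then
\[
T \;=\; \widetilde{\Theta}\,R \;=\; \widetilde{\Theta}\,\pi\,\widetilde{R} \;=\; \Theta\,\widetilde{R},
\]
which exhibits $T$ as a factorization through $\mathcal{S}_{p'}$, so $T \in \Gamma_{\mathcal{S}_{p'}}(V,W)$ with
\[
\gamma_{\mathcal{S}_{p'}}(T) \;\le\; \n{\widetilde{R}}_{\cb}\n{\Theta}_{\cb} \;\le\; (1+\varepsilon')(1+\varepsilon)\,\omega_p^o(T).
\]
Letting $\varepsilon,\varepsilon' \to 0$ gives $\gamma_{\mathcal{S}_{p'}}(T) \le \omega_p^o(T)$, and combined with the easy direction this yields the equality.

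The main conceptual obstacle is precisely the lifting step: without the projectivity hypothesis, Theorem \ref{thm-factorization-weakly-p-compact} only provides a factorization through the quotient $\mathcal{S}_{p'}/\ker\Theta$, which in general cannot be pulled back to a genuine factorization through $\mathcal{S}_{p'}$. Projectivity of $V$ is exactly the tool that converts the quotient-valued complete contraction $R$ into a bona fide complete contraction into $\mathcal{S}_{p'}$ at negligible cost; the remainder of the argument is routine norm bookkeeping together with a direct application of the factorization theorem from the previous section.
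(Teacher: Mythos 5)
Your proof is correct and follows the route the paper intends: the paper omits the argument, citing that it is analogous to \cite[Prop. 3.7]{ChaDiGa-Operator-p-compact}, and that analogue is exactly your combination of the factorization through $\mathcal{S}_{p'}/\ker\Theta$ from Theorem \ref{thm-factorization-weakly-p-compact} (with $G=\mathcal{S}_{p'}$) and the $(1+\varepsilon)$-lifting of $R$ through the complete $1$-quotient $\pi$ afforded by projectivity of $V$. The easy inequality $\omega_p^o(T)\le\gamma_{\mathcal{S}_{p'}}(T)$ via \eqref{eqn-omegap-mas-simple} and the norm bookkeeping are both handled correctly.
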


\begin{proposition}\label{prop-Wp-is-Gamma-p-sur}
Let $V$ and $W$ be operator spaces. Then $T \in  \mathcal{W}_p^o(V,W)$ if and only if $T  Q_V \in \Gamma_{\mathcal{S}_{p'}}(Z_V,W)$ and $\omega_p^o(T) = \gamma_{\mathcal{S}_{p'}}(T Q_V)$.    
\end{proposition}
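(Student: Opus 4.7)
The plan is to argue both inequalities in $\omega_p^o(T) = \gamma_{\mathcal{S}_{p'}}(TQ_V)$ by exploiting two key facts: $Z_V$ is projective (so we can lift completely bounded maps with control of the $\cb$-norm through complete $1$-quotients), and $Q_V : Z_V \to V$ is itself a complete $1$-quotient (so elements of $M_n(V)$ of norm $<1$ lift to $M_n(Z_V)$ of norm $<1+\varepsilon$).

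For the direction $\gamma_{\mathcal{S}_{p'}}(TQ_V) \le \omega_p^o(T)$, I start with $T \in \mathcal{W}_p^o(V,W)$. By Theorem \ref{thm-factorization-weakly-p-compact} applied with $G=\mathcal{S}_{p'}$ (equivalently, by \eqref{eqn-omegap-mas-simple}), for any $\varepsilon>0$ I get $\Theta \in \CB(\mathcal{S}_{p'},W)$ with $\n{\Theta}_{\cb} \le (1+\varepsilon)\omega_p^o(T)$ and a complete contraction $R : V \to \mathcal{S}_{p'}/\ker\Theta$ with $T = \widetilde{\Theta} R$. Composing with $Q_V$ gives a complete contraction $RQ_V : Z_V \to \mathcal{S}_{p'}/\ker\Theta$. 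Since $Z_V$ is projective and $\pi : \mathcal{S}_{p'} \twoheadrightarrow \mathcal{S}_{p'}/\ker\Theta$ is a complete $1$-quotient, I lift $RQ_V$ to $S \in \CB(Z_V,\mathcal{S}_{p'})$ with $\n{S}_{\cb} \le 1+\varepsilon$ and $\pi S = RQ_V$. Then $TQ_V = \widetilde{\Theta}\pi S = \Theta S$, which is a factorization through $\mathcal{S}_{p'}$, yielding $\gamma_{\mathcal{S}_{p'}}(TQ_V) \le (1+\varepsilon)^2 \omega_p^o(T)$; letting $\varepsilon \to 0$ gives the desired bound.

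For the reverse inequality, assume $TQ_V \in \Gamma_{\mathcal{S}_{p'}}(Z_V,W)$ and pick a factorization $TQ_V = ba$ with $a \in \CB(Z_V,\mathcal{S}_{p'})$, $b\in\CB(\mathcal{S}_{p'},W)$ and $\n{a}_{\cb}\n{b}_{\cb} \le (1+\varepsilon)\gamma_{\mathcal{S}_{p'}}(TQ_V)$. Given $v \in M_n(V)$ with $\n{v} \le 1$, use that $Q_V$ is a complete $1$-quotient to lift $v$ to some $u \in M_n(Z_V)$ with $\n{u} \le 1+\varepsilon$; then $T_n v = (TQ_V)_n u = b_n(a_n u)$, and $a_n u \in M_n(\mathcal{S}_{p'})$ has norm at most $(1+\varepsilon)\n{a}_{\cb}$. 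Setting $\Theta = (1+\varepsilon)\n{a}_{\cb}\, b \in \CB(\mathcal{S}_{p'},W)$, this shows $T(\mathbf{B}_V) \subseteq \Theta(\mathbf{B}_{\mathcal{S}_{p'}})$, hence $\omega_p^o(T) \le \n{\Theta}_{\cb} \le (1+\varepsilon)^2 \gamma_{\mathcal{S}_{p'}}(TQ_V)$ by \eqref{eqn-omegap-mas-simple}, and again $\varepsilon \to 0$ finishes.

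The main technical point, and the only place any care is needed, is the lifting step in the forward direction: one must invoke projectivity of $Z_V$ in the precise form that allows lifting through a complete $1$-quotient with norm loss arbitrarily close to $1$. Everything else is bookkeeping with the factorization diagram \eqref{eqn-factorization-weakly-p-compact} from Theorem \ref{thm-factorization-weakly-p-compact} and the definition of $\gamma_{\mathcal{S}_{p'}}$.
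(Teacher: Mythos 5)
Your argument is correct and is essentially the intended one: the paper omits the proof, stating only that it is analogous to \cite[Prop. 3.7 and 3.8]{ChaDiGa-Operator-p-compact}, and those arguments proceed exactly as you do, combining the factorization of Theorem \ref{thm-factorization-weakly-p-compact} (with $G=\mathcal{S}_{p'}$) with the projectivity of $Z_V$ for one inequality and the complete $1$-quotient property of $Q_V$ for the other. Both $\varepsilon$-bookkeeping steps check out, so nothing is missing.
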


By the completeness of $\Gamma_{\mathcal{S}_{p'}}(Z_V,W)$ and the previous proposition we easily derive:

\begin{corollary}
   Let $V$ and $W$ be operator spaces. Then $\mathcal{W}_p^o(V,W)$ is a Banach space.
\end{corollary}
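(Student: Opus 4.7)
The plan is to realize $(\mathcal{W}_p^o(V,W), \omega_p^o)$ as a closed subspace of the Banach space $(\Gamma_{\mathcal{S}_{p'}}(Z_V,W), \gamma_{\mathcal{S}_{p'}})$ via the assignment $\Psi : T \mapsto T Q_V$. Proposition \ref{prop-Wp-is-Gamma-p-sur} tells me that $\Psi$ is a well-defined linear isometry from $\mathcal{W}_p^o(V,W)$ into $\Gamma_{\mathcal{S}_{p'}}(Z_V,W)$, and the remark preceding Theorem \ref{thm-factorization-weakly-p-compact} guarantees that the target is complete whenever $W$ is. Hence once I check that the image of $\Psi$ is closed, the completeness of $\mathcal{W}_p^o(V,W)$ will follow automatically.

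To verify closedness I would take a Cauchy sequence $(T_n)$ in $\mathcal{W}_p^o(V,W)$ and let $\Phi \in \Gamma_{\mathcal{S}_{p'}}(Z_V,W)$ be the $\gamma_{\mathcal{S}_{p'}}$-limit of $T_n Q_V$. Since $\gamma_{\mathcal{S}_{p'}}$ dominates $\n{\cdot}_{\cb}$, convergence also holds in $\CB(Z_V,W)$, and in particular $\Phi z = \lim_n T_n Q_V z = 0$ for every $z \in \ker Q_V$. Therefore $\ker Q_V \subseteq \ker \Phi$, so $\Phi$ descends to a well-defined linear map $T : V \to W$ with $T Q_V = \Phi$. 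The universal property of $Q_V$ as a complete $1$-quotient further ensures $\n{T}_{\cb} = \n{\Phi}_{\cb}$, so $T \in \CB(V,W)$. The ``if'' direction of Proposition \ref{prop-Wp-is-Gamma-p-sur} then places $T$ in $\mathcal{W}_p^o(V,W)$, and the isometry identity
\[
\omega_p^o(T - T_n) = \gamma_{\mathcal{S}_{p'}}\bigl((T - T_n) Q_V\bigr) = \gamma_{\mathcal{S}_{p'}}(\Phi - T_n Q_V) \xrightarrow[n\to\infty]{} 0
\]
delivers $T_n \to T$ in the $\omega_p^o$ norm, completing the argument.

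I do not anticipate a genuine obstacle here; all the heavy lifting has already been done in Proposition \ref{prop-Wp-is-Gamma-p-sur}. The only point that requires a little care is the factorization of $\Phi$ through the quotient $Q_V$: this relies on the universal property of complete $1$-quotients together with the stability of the condition ``vanishes on $\ker Q_V$'' under pointwise (and hence $\cb$-norm) limits. Everything else is the standard ``closed subspace of a Banach space is a Banach space'' routine.
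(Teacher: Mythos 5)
Your argument is correct and follows exactly the route the paper intends: the paper derives the corollary "by the completeness of $\Gamma_{\mathcal{S}_{p'}}(Z_V,W)$ and the previous proposition," leaving the details to the reader, and your proof supplies precisely those details (the isometric embedding $T\mapsto TQ_V$, the factorization of the limit through $\ker Q_V$, and the transfer of convergence back via the isometry). No gaps.
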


\section{Relations to completely $p$-summing maps}\label{sect-completely-summing}

Another class of mappings related to the notions of $p$-compactness and weakly $p$-compactness in the classical realm is the ideal of absolutely $p$-summing mappings. We study similar interactions in our noncommutative framework.

Recall that for $1 \le p \le \infty$ a linear map $T : V \to W$ between operator spaces is called completely $p$-summing \cite[Chap. 5]{Pisier-Asterisque} if 
\[
Id_{\mathcal{S}_p}\otimes T : \mathcal{S}_p \otimes_{\min} V \to \mathcal{S}_p[W]
\]
is bounded, and the norm $\pi^o_p(T)$ of this map is the completely $p$-summing norm of $T$.
We denote by $\Pi_p^o(V,W)$ the set of all completely $p$-summing maps from $V$ to $W$, and this is a mapping ideal.
The basic example of a completely $p$-summing map is the multiplication map $M(a,b) : \mathcal{B}(\ell_2) \to \mathcal{S}_p$ where $a,b \in \mathcal{S}_{2p}$
\cite[Prop. 5.6]{Pisier-Asterisque}, which in particular implies that completely $p$-nuclear maps are completely $p$-summing and we always have $\pi_p^o \le \nu_p^o$.

We next note that in the definition of completely $p$-summing map one could equivalently use $\mathcal{S}_p^w[V]$ instead of $\mathcal{S}_p \otimes_{\min} V$.

\begin{proposition}\label{prop-p-summing-with-Sp-weak}
A linear map $T : V \to W$ between operator spaces is completely $p$-summing if and only if
\[
Id_{\mathcal{S}_p}\otimes T : \mathcal{S}_p^w[V] \to \mathcal{S}_p[W]
\]
is bounded. Moreover, the norm of this map coincides with $\pi_p^o(T)$.
\end{proposition}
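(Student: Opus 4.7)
The ``only if'' direction is immediate. By Lemma~\ref{lemma-Theta-well-defined} together with the definition of $\mathcal{S}_p^w[V]$, the space $\mathcal{S}_p\otimes_{\min} V$ embeds completely isometrically into $\mathcal{S}_p^w[V]$, so any bounded map $\mathrm{Id}\otimes T:\mathcal{S}_p^w[V]\to\mathcal{S}_p[W]$ restricts to a bounded map on $\mathcal{S}_p\otimes_{\min} V$ of no larger norm; hence $\pi_p^o(T)\leq\|\mathrm{Id}\otimes T\|$.

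For the converse, suppose $T$ is completely $p$-summing. Given $v\in\mathcal{S}_p^w[V]$, the truncations $v[N]\in S_p^N\otimes V\subseteq \mathcal{S}_p\otimes_{\min} V$ satisfy
\[
\|v[N]\|_{\mathcal{S}_p\otimes_{\min} V}=\|v[N]\|_{S_p^N\otimes_{\min} V}\leq \|v\|_{\mathcal{S}_p^w[V]}
\]
by the very definition of $\mathcal{S}_p^w[V]$, so invoking the completely $p$-summing hypothesis gives
\[
\bigl\|(\mathrm{Id}\otimes T)(v[N])\bigr\|_{\mathcal{S}_p[W]}\leq \pi_p^o(T)\,\|v\|_{\mathcal{S}_p^w[V]}
\]
uniformly in $N$.

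The main obstacle is to pass from this uniformly bounded sequence of truncations in $\mathcal{S}_p[W]$ to a single element that represents $(\mathrm{Id}\otimes T)(v)$. Under the canonical complete contraction $\mathcal{S}_p[W]\hookrightarrow \mathcal{S}_p^w[W]=\CB(\mathcal{S}_{p'},W)$, the natural candidate is the composition $T\circ\Theta^v$, which agrees pointwise with the truncations on finite-rank elements of $\mathcal{S}_{p'}$. To verify that $T\circ\Theta^v$ actually lies in $\mathcal{S}_p[W]$ with norm at most $\pi_p^o(T)\|v\|_{\mathcal{S}_p^w[V]}$, I would exploit the duality $\mathcal{S}_p[V]^*=\mathcal{S}_{p'}[V']$ of \cite[Cor. 1.8]{Pisier-Asterisque}: taking biduals identifies $\mathcal{S}_p[W]^{**}$ with $\mathcal{S}_p[W'']$ (for $1\leq p<\infty$), so Banach--Alaoglu applied in $\mathcal{S}_p[W'']$ extracts a weak-$*$ subsequential limit $\tilde u$ of the truncations, with $\|\tilde u\|_{\mathcal{S}_p[W'']}\leq\pi_p^o(T)\|v\|_{\mathcal{S}_p^w[V]}$. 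Pointwise identification then forces $\tilde u$ to correspond to $\iota_W\circ T\circ\Theta^v$, whose entries all lie in $\iota_W(W)$, so $\tilde u$ belongs to the canonical isometric copy of $\mathcal{S}_p[W]$ inside $\mathcal{S}_p[W'']$, yielding the required bound $\|\mathrm{Id}\otimes T\|\leq\pi_p^o(T)$ and finishing the proof.
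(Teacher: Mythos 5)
Your ``only if'' direction and the uniform bound on truncations, $\n{(Tv_{ij})_{i,j=1}^N}_{\mathcal{S}_p^N[W]} \le \pi_p^o(T)\n{v}_{\mathcal{S}_p^w[V]}$, coincide with the paper's argument. The divergence is in how you pass from this uniform bound to an element of $\mathcal{S}_p[W]$. The paper simply invokes \cite[Lem. 1.12]{Pisier-Asterisque}, which says precisely that a $W$-valued matrix whose truncations are uniformly bounded in $\mathcal{S}_p^N[W]$ represents an element of $\mathcal{S}_p[W]$ with norm equal to that supremum; this finishes the proof in one line and for every $p$ at once. Your substitute --- weak-$*$ compactness in $\mathcal{S}_p[W]''\cong\mathcal{S}_p[W'']$ --- has a genuine gap at $p=1$: the duality $\mathcal{S}_p[V]'=\mathcal{S}_{p'}[V']$ is only valid for $1<p\le\infty$ (this is how the paper itself states it in the preliminaries), since $\mathcal{S}_1[W]=\mathcal{S}_1\widehat\otimes_{\proj}W$ has dual $\CB(W,\mathcal{B}(\ell_2))$, which in general strictly contains $\mathcal{S}_\infty[W']$. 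Iterating the duality to obtain $\mathcal{S}_p[W]''=\mathcal{S}_p[W'']$ therefore only works for $1<p<\infty$, so your argument does not cover $p=1$ (nor $p=\infty$, which you exclude explicitly), whereas the statement is meant for the full range.

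Even for $1<p<\infty$, where the bidual identification is legitimate, two steps are asserted rather than proved: Banach--Alaoglu gives a weak-$*$ convergent subnet, not a subsequence, since the predual $\mathcal{S}_{p'}[W']$ need not be separable (minor); more importantly, the claim that an element of $\mathcal{S}_p[W'']$ all of whose entries lie in $\iota_W(W)$ belongs to the isometric copy of $\mathcal{S}_p[W]$ needs justification. The natural proof is that its truncations lie in $\mathcal{S}_p^N[W]$ (isometrically inside $\mathcal{S}_p^N[W'']$ by injectivity of the Haagerup tensor product) and converge to it in norm --- which is exactly the truncation-density statement packaged in \cite[Lem. 1.12]{Pisier-Asterisque}. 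In other words, the detour through the bidual still requires the lemma that makes the direct argument work; you should cite it and conclude directly from the uniform bound you already established.
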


\begin{proof}
Since $\mathcal{S}_p \otimes_{\min} V$ is completely isometric to a subspace of $\mathcal{S}_p^w[V]$, it is clear that when ${Id_{\mathcal{S}_p}\otimes T : \mathcal{S}_p^w[V] \to \mathcal{S}_p[W]}$ is bounded we have that $T$ is completely $p$-summing and moreover $\pi_p^o(T) \le \n{Id_{\mathcal{S}_p}\otimes T : \mathcal{S}_p^w[V] \to \mathcal{S}_p[W]}$.

Suppose now that $T$ is completely $p$-summing.
Restricting to subspaces, note that for every $N\in\N$ we have $\n{Id_{\mathcal{S}_p^N}\otimes T : \mathcal{S}_p^N \otimes_{\min} V \to \mathcal{S}_p^N[W]} \le \pi_p^o(T)$.
If $v=(v_{ij})_{i,j=1}^\infty \in \mathcal{S}_p^w[V]$, by the above we have that for each $N\in\N$,
\[
\n{(Tv_{ij})_{i,j=1}^N}_{\mathcal{S}_p^N[W]} \le \pi_p^o(T) \n{(v_{ij})_{i,j=1}^N}_{\mathcal{S}_p^N \otimes_{\min} V}
\]
Taking the supremum over $N$, it follows from \cite[Lem. 1.12]{Pisier-Asterisque} that $(Id_{\mathcal{S}_p} \otimes T)(v) \in \mathcal{S}_p[W]$ and moreover
$\n{(Id_{\mathcal{S}_p} \otimes T)(v)}_{\mathcal{S}_p[W]} \le \pi_p^o(T) \n{v}_{\mathcal{S}_p^w[V]}$, so
$\n{Id_{\mathcal{S}_p}\otimes T : \mathcal{S}_p^w[V] \to \mathcal{S}_p[W]} \le \pi_p^o(T)$
which completes the proof.
\end{proof}

With the previous result at hand, we can now derive a statement analogous to \cite[Prop. 5.4]{Sinha-Karn} that captures the interaction we had previously mentioned.

\begin{proposition}\label{prop-p-summing-maps-weakly-p-compact-to-p-compact}
For all $1\le p \le \infty$, completely $p$-summing maps send relatively weakly $p$-compact matrix sets to relatively operator $p$-compact matrix sets.
Furthermore, if $T \in \mathcal{W}_p^o(Z,V)$ and $A \in \Pi_p^o(V,W)$ then $AT \in \mathcal{K}_p^o(Z,W)$ and $\kappa_p^o(AT) \le \pi_p^o(A)\omega_p^o(T)$.
\end{proposition}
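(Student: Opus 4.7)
The plan is to reduce the claim to Proposition \ref{prop-p-summing-with-Sp-weak}, by checking that composition with a completely $p$-summing map converts the element of $\mathcal{S}_p^w[V]$ witnessing weak $p$-compactness into an element of $\mathcal{S}_p[W]$ witnessing $p$-compactness.

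First, let $\mathbf{K}$ be a relatively operator weakly $p$-compact matrix set over $V$ and fix $\varepsilon>0$. Choose $v=(v_{ij})\in\mathcal{S}_p^w[V]$ with $\mathbf{K}\subseteq \Theta^v(\mathbf{B}_{\mathcal{S}_{p'}})$ and $\|v\|_{\mathcal{S}_p^w[V]}\le (1+\varepsilon)\frak{m}_p^{w,o}(\mathbf{K})$. For $A\in\Pi_p^o(V,W)$, observe that the infinite matrix $(Av_{ij})_{i,j}$ is precisely $(Id_{\mathcal{S}_p}\otimes A)(v)$ when $v$ is viewed as an element of $\mathcal{S}_p^w[V]$. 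By Proposition \ref{prop-p-summing-with-Sp-weak} this matrix belongs to $\mathcal{S}_p[W]$ and
\[
\bigl\|(Av_{ij})\bigr\|_{\mathcal{S}_p[W]} \le \pi_p^o(A)\,\|v\|_{\mathcal{S}_p^w[V]}.
\]

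Next I claim $A\circ \Theta^v = \Theta^{(Av_{ij})}$ as mappings from $\mathcal{S}_{p'}$ to $W$. Indeed, both maps are completely bounded, and on a matrix unit $E_{ij}\in\mathcal{S}_{p'}$ they agree: $A\Theta^v(E_{ij})=Av_{ij}=\Theta^{(Av_{ij})}(E_{ij})$. Since the matrix units have dense linear span in $\mathcal{S}_{p'}$ (for $p>1$; and for $p=1$ one can use the elementary tensors $a\otimes b$ and the argument of Lemma \ref{lemma-factorization-Pisier}), the two maps coincide. Therefore
\[
A(\mathbf{K}) \subseteq A\circ\Theta^v(\mathbf{B}_{\mathcal{S}_{p'}}) = \Theta^{(Av_{ij})}(\mathbf{B}_{\mathcal{S}_{p'}}),
\]
which shows that $A(\mathbf{K})$ is relatively operator $p$-compact with
\[
\frak m_p^o(A(\mathbf{K})) \le \|(Av_{ij})\|_{\mathcal{S}_p[W]} \le \pi_p^o(A)(1+\varepsilon)\frak m_p^{w,o}(\mathbf{K}).
\]
Letting $\varepsilon\to 0$ gives the first statement together with the quantitative bound $\frak m_p^o(A(\mathbf{K}))\le \pi_p^o(A)\frak m_p^{w,o}(\mathbf{K})$.

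For the second statement, apply the above to the relatively weakly $p$-compact matrix set $\mathbf{K}=T(\mathbf{B}_Z)$, whose measure satisfies $\frak m_p^{w,o}(T(\mathbf{B}_Z))\le \omega_p^o(T)$ by the definition of $\omega_p^o$. Then $AT(\mathbf{B}_Z)=A(T(\mathbf{B}_Z))$ is relatively operator $p$-compact with
\[
\kappa_p^o(AT) = \frak m_p^o(AT(\mathbf{B}_Z)) \le \pi_p^o(A)\,\frak m_p^{w,o}(T(\mathbf{B}_Z)) \le \pi_p^o(A)\,\omega_p^o(T),
\]
as desired. The only genuine subtlety is the identification $A\circ\Theta^v=\Theta^{(Av_{ij})}$ as an equality of $\cb$-maps out of $\mathcal{S}_{p'}$; everything else is a direct application of Proposition \ref{prop-p-summing-with-Sp-weak} together with the definitions of $\frak m_p^o$, $\frak m_p^{w,o}$, $\kappa_p^o$ and $\omega_p^o$.
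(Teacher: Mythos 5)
Your proposal is correct and follows essentially the same route as the paper: pick $v\in\mathcal{S}_p^w[V]$ witnessing weak $p$-compactness, apply Proposition \ref{prop-p-summing-with-Sp-weak} to get $w=(Id_{\mathcal{S}_p}\otimes A)v\in\mathcal{S}_p[W]$ with $\n{w}_{\mathcal{S}_p[W]}\le\pi_p^o(A)\n{v}_{\mathcal{S}_p^w[V]}$, and observe $A(\mathbf{K})\subseteq A\Theta^v(\mathbf{B}_{\mathcal{S}_{p'}})=\Theta^w(\mathbf{B}_{\mathcal{S}_{p'}})$. The only difference is that you spell out the identification $A\circ\Theta^v=\Theta^{w}$ (including the $p=1$ caveat), which the paper states without comment.
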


\begin{proof}
Let $A : V \to W$ be a completely $p$-summing map.
Let $\mathbf{K}$ be a relatively operator weakly $p$-compact matrix set over $V$, and take $v \in \mathcal{S}_p^w[V]$ such that $\mathbf{K} \subseteq \Theta^v( \mathbf{B}_{\mathcal{S}_{p'}} )$.
By Proposition \ref{prop-p-summing-with-Sp-weak}, $w = (Id_{\mathcal{S}_p}\otimes A)v \in \mathcal{S}_p[W]$ and moreover $\n{w}_{\mathcal{S}_p[W]} \le \pi_p^o(A)\n{v}_{\mathcal{S}_p^w[V]}$.
Observing that $A(\mathbf{K}) \subseteq A\Theta^v(\mathbf{B}_{\mathcal{S}_{p'}}) = \Theta^w( \mathbf{B}_{\mathcal{S}_{p'}} )$ yields that $A(\mathbf{K})$ is operator $p$-compact, and taking the infimum over the $v$'s gives that $\frak m_p^o(A(\mathbf{K})) \le \pi_p^o(A) \frak m_p^{w,o}(\mathbf{K})$.
The result for compositions now follows immediately by taking $\mathbf{K} = T(\mathbf{B}_Z)$.
\end{proof}

In the Banach space setting, there exists a relationship that characterizes $p$-summing maps: a linear map between Banach spaces is $p$-summing if and only if its adjoint maps compact sets to $p$-compact sets \cite[Thm. 3.12]{Delgado-Pineiro-Serrano-adjoints}. For operator spaces, we can establish one implication in general, but the other implication requires an additional assumption. This notion is, in a sense, dual to the well-known concept of exactness: while exactness involves that all finite-dimensional subspaces are uniformly completely isomorphic to subspaces of $M_n$ spaces, the ``structure" we require corresponds to being quotients of $\mathcal{S}_1^n$ spaces. 

\begin{definition}[\cite{webster1998matrix}]
Let $V$ be an operator space.
\begin{enumerate}[(a)]
\item We say that $V$ is \emph{$\lambda$-coexact} if for every $\varepsilon>0$ we can find $n\in\N$ and $W \subseteq \mathcal{S}_1^n$ closed subspace such that $V$ and $\mathcal{S}_1^n/W$ are $(\lambda+\varepsilon)$-completely isomorphic.
\item We say that $V$ is \emph{$\lambda$-subcoexact} if for every finite-dimensional subspace $W \subseteq V$ there exists another finite-dimensional $W_0 \subseteq V$ which contains $W$ and is $\lambda$-coexact. 
\end{enumerate}    
\end{definition}

Before proceeding, we require a couple of preparatory lemmas.
They are both related to the aforementioned fact that any completely $p$-nuclear mapping is completely $p$-summing, and the following norm relation always holds: $\pi_p^o \le \nu_p^o$.
The first one shows that a similar relationship holds between the $\pi_p^o$ and $q\nu_p^o$ norms, whereas the second one (whose classical version can be found, e.g. in \cite[Cor. 9.5]{Tomczak-Jaegermann}) deals with a special case where we have equality between $\pi_p^o$, $\nu_p^o$, and $q\nu_p^o$.

\begin{lemma}\label{lemma:cuasi-p-nuclear es completely p-summing}
Let $V,W$ be operator spaces. If $T \in \mathcal{QN}^o_p(V,W)$ then $T\in \Pi_p^{o}(V,W)$ with $\pi_p^o(T) \le q\nu_p^o(T)$.
\end{lemma}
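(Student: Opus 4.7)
My plan is to reduce the statement to two simpler facts: (i) every completely $p$-nuclear map is completely $p$-summing with $\pi_p^o \le \nu_p^o$, and (ii) the completely $p$-summing norm descends along completely isometric embeddings of the codomain. Fact (i) is standard: the ideal property of $\Pi_p^o$ together with Pisier's computation that $M(a,b):\mathcal{B}(\ell_2) \to \mathcal{S}_p$ has $\pi_p^o(M(a,b)) \le \n{a}_{\mathcal{S}_{2p}}\n{b}_{\mathcal{S}_{2p}}$ \cite[Prop. 5.6]{Pisier-Asterisque}, applied to the defining factorization of any completely $p$-nuclear map (using Lemma \ref{lemma:multiplication-operator-on-B(ell_2)} to pass from $\mathcal{S}_\infty$ to $\mathcal{B}(\ell_2)$), delivers the norm inequality. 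So given $T \in \mathcal{QN}^o_p(V,W)$, pick a completely isometric embedding $j:W \hookrightarrow Y$ into an injective operator space with $\nu_p^o(jT) = q\nu_p^o(T)$; then $jT$ is completely $p$-summing with $\pi_p^o(jT) \le q\nu_p^o(T)$, and one must transfer this estimate to $T$ itself.

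For the transfer, the key observation is that the functor $\mathcal{S}_p[\cdot]$ preserves completely isometric embeddings. Indeed, by the completely isometric identification $\mathcal{S}_p[X] = \mathcal{R}(1/p') \widehat\otimes_h X \widehat\otimes_h \mathcal{R}(1/p)$ from \cite[Thm. 1.1]{Pisier-Asterisque} (already invoked in the proof of Lemma \ref{lemma-Theta-well-defined}) together with the injectivity of the Haagerup tensor product, the induced map $Id_{\mathcal{S}_p} \otimes j : \mathcal{S}_p[W] \to \mathcal{S}_p[Y]$ is a complete isometric embedding. Now for any $u \in \mathcal{S}_p \otimes_{\min} V$, complete boundedness of $T$ gives that $(Id_{\mathcal{S}_p} \otimes T)(u)$ lies a priori in $\mathcal{S}_p \otimes_{\min} W \subseteq \mathcal{S}_p^w[W]$, and its image under $Id_{\mathcal{S}_p} \otimes j$ is $(Id_{\mathcal{S}_p} \otimes (jT))(u)$, which by fact (i) lives in $\mathcal{S}_p[Y]$ with norm at most $q\nu_p^o(T) \n{u}_{\mathcal{S}_p \otimes_{\min} V}$.

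It remains to upgrade $(Id_{\mathcal{S}_p} \otimes T)(u)$ from an element of $\mathcal{S}_p^w[W]$ to one of $\mathcal{S}_p[W]$ with the correct norm bound. This is achieved by a truncation argument: consider the truncations $u[N]$ of $u$ to the initial $N\times N$ block; each $(Id_{\mathcal{S}_p} \otimes T)(u[N])$ is a finite matrix with entries in $W$, hence belongs to $\mathcal{S}_p[W]$, and its image in $\mathcal{S}_p[Y]$ is $(Id_{\mathcal{S}_p} \otimes (jT))(u[N])$, which converges in $\mathcal{S}_p[Y]$ to $(Id_{\mathcal{S}_p} \otimes (jT))(u)$. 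Since $Id_{\mathcal{S}_p} \otimes j$ is an isometric embedding on $\mathcal{S}_p[W]$, the Cauchy property in $\mathcal{S}_p[Y]$ forces the truncations to be Cauchy in $\mathcal{S}_p[W]$, producing a limit whose image in $\mathcal{S}_p[Y]$ must agree with $(Id_{\mathcal{S}_p}\otimes(jT))(u)$, and which therefore must equal $(Id_{\mathcal{S}_p}\otimes T)(u)$. Taking norms and applying Proposition \ref{prop-p-summing-with-Sp-weak} (or directly using the $\mathcal{S}_p \otimes_{\min} V$ formulation) completes the proof.

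The main obstacle is the last paragraph's identification: verifying that an element of $\mathcal{S}_p[Y]$ whose matrix entries all lie in $W$ actually comes from $\mathcal{S}_p[W]$ under the embedding. Once the isometry of the embedding and the norm-density of finitely supported matrices are in hand, this is routine, but it is the step that genuinely uses the Haagerup injectivity and not just general tensor product considerations.
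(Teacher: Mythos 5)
Your argument is correct and follows essentially the same route as the paper: compose with a completely isometric embedding $j$ of $W$ into an injective operator space, observe that $jT$ is completely $p$-nuclear so $\pi_p^o(jT)\le\nu_p^o(jT)=q\nu_p^o(T)$, and then pull the estimate back through $j$. The only difference is that for this last step the paper simply cites the complete injectivity of the ideal $\Pi_p^o$, whereas you prove it from scratch via the Haagerup-tensor-product description of $\mathcal{S}_p[\cdot]$ (so that $Id_{\mathcal{S}_p}\otimes j$ is a complete isometry $\mathcal{S}_p[W]\to\mathcal{S}_p[Y]$) together with a truncation argument -- a correct, self-contained substitute for that citation.
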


\begin{proof}
By definition, if $Z$ is an injective operator space and $j: W \to Z$ is a complete isometry, then it follows that $jT: T \to Z$ is completely $p$-nuclear. Consequently, we have $\pi_p^o(jT) \le \nu_p^o(jT) = q\nu_p^o(T)$.
Since the ideal of completely $p$-summing maps is completely injective (see \cite[Comment after Def. 7.5]{ChaDiGa-tensor-norms}), we can conclude that $\pi_p^o(T) \le q\nu_p^o(T)$.
\end{proof}

\begin{lemma}\label{lemma-p-summing-coincides-with-p-nuclear}
If $V$ is an operator space, $1\le p \le \infty$, $n\in\N$ and $T : V \to M_n$ is a linear map, then $\pi_p^o(T) = \nu_p^o(T)$.  
\end{lemma}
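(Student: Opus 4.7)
The inequality $\pi_p^o(T) \le \nu_p^o(T)$ is automatic: the prototypical multiplication mapping $M(a,b)\colon \mathcal{B}(\ell_2) \to \mathcal{S}_p$ appearing in the definition of $\nu_p^o$ is completely $p$-summing with $\pi_p^o(M(a,b)) = \|a\|_{\mathcal{S}_{2p}} \|b\|_{\mathcal{S}_{2p}}$, as recalled just before Proposition~\ref{prop-p-summing-with-Sp-weak}, and then the ideal property of $\pi_p^o$ takes care of the rest. The content of the lemma is therefore the reverse inequality $\nu_p^o(T) \le \pi_p^o(T)$.

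The strategy is to apply Pisier's noncommutative Pietsch-type factorization for completely $p$-summing mappings \cite[Chap. 5]{Pisier-Asterisque}. Fix $\varepsilon > 0$ and embed $V$ completely isometrically into some $\mathcal{B}(H)$. The factorization theorem produces $a, b \in \mathcal{S}_{2p}$ with $\|a\|_{\mathcal{S}_{2p}} \|b\|_{\mathcal{S}_{2p}} \le (1+\varepsilon)\pi_p^o(T)$ and a complete contraction $\tilde{u}$ defined on the image of $V$ inside $\mathcal{S}_p$ (after passing through the multiplication map $M(a,b)\colon \mathcal{B}(H) \to \mathcal{S}_p$) such that $T$ factors as
\[
V \hookrightarrow \mathcal{B}(H) \xrightarrow{M(a,b)} \mathcal{S}_p \xrightarrow{\tilde{u}} M_n,
\]
where a priori $\tilde{u}$ takes values in the bidual of the codomain, but since $M_n''=M_n$ this is harmless here.

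Next, exploit the fact that $M_n$ is an injective operator space: by the Arveson--Wittstock extension theorem, the partially defined complete contraction $\tilde{u}$ extends to a complete contraction $u \colon \mathcal{S}_p \to M_n$ with $\|u\|_{\cb} \le 1$. Applying Lemma \ref{lemma:multiplication-operator-on-B(ell_2)} to the multiplication $M(a,b)\colon \mathcal{B}(\ell_2) \to \mathcal{S}_p$ rewrites it as $M(a_1,b_1) \circ M_2$ where $M_2\colon \mathcal{B}(\ell_2) \to \mathcal{S}_\infty$ is a complete contraction and $M(a_1,b_1)\colon \mathcal{S}_\infty \to \mathcal{S}_p$ has $\|a_1\|_{\mathcal{S}_{2p}}\|b_1\|_{\mathcal{S}_{2p}} \le (1+\varepsilon)\|a\|_{\mathcal{S}_{2p}} \|b\|_{\mathcal{S}_{2p}}$. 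Composing everything yields a factorization of $T$ of precisely the form demanded by Definition \ref{defn-completely-p-nuclear}, with total norm bounded by $(1+\varepsilon)^2 \pi_p^o(T)$. Letting $\varepsilon \to 0$ gives $\nu_p^o(T) \le \pi_p^o(T)$.

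The main obstacle is extracting from Pisier's Pietsch factorization the fact that the back map $\tilde{u}$ is \emph{completely bounded} (rather than merely bounded) into $M_n$, and not just into a bidual. Both concerns are dissolved by the finite dimensionality of $M_n$: its reflexivity collapses the bidual issue, and its injectivity permits the Arveson--Wittstock extension that upgrades $\tilde{u}$ to a complete contraction defined on all of $\mathcal{S}_p$. No analogous argument would work for a general codomain, which is why the statement is restricted to targets $M_n$.
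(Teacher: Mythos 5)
Your easy direction is fine: $\pi_p^o(T) \le \nu_p^o(T)$ does follow from $\pi_p^o(M(a,b)) = \n{a}_{\mathcal{S}_{2p}}\n{b}_{\mathcal{S}_{2p}}$ and the ideal property, and this matches the paper. The gap is in the reverse inequality, where you invoke a ``Pietsch-type factorization'' from \cite[Chap.~5]{Pisier-Asterisque} that allegedly produces a \emph{single} pair $a,b\in\mathcal{S}_{2p}$ with $\n{a}_{\mathcal{S}_{2p}}\n{b}_{\mathcal{S}_{2p}}\le(1+\varepsilon)\pi_p^o(T)$ and a complete contraction $\tilde u$ on $\overline{aVb}\subseteq\mathcal{S}_p$. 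No such theorem exists in that form. In the noncommutative Pietsch argument the Hahn--Banach separation yields a state on $\mathcal{B}(H)$ that need not be normal, so one cannot extract a density in $\mathcal{S}_{2p}(H)$; Pisier's factorization consequently passes through an ultraproduct of Schatten classes (equivalently, the $L_p$-space of a von Neumann algebra far larger than $\mathcal{B}(H)$), not through $\mathcal{S}_p$ itself. Had the factorization you describe been available, every completely $p$-summing map into an injective operator space would be completely $p$-nuclear, i.e.\ the inequality $\pi_p^o \le q\nu_p^o$ of Lemma~\ref{lemma:cuasi-p-nuclear es completely p-summing} would be an equality in complete generality, and there would be no reason for either the paper or Junge to restrict the present statement to codomain $M_n$.

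The features of $M_n$ you actually use --- reflexivity and injectivity --- only dispose of the bidual and Arveson--Wittstock extension issues; they do not collapse the ultraproduct to a single copy of $\mathcal{S}_p$. That collapse is the entire content of the hard direction, and the paper obtains it by citing \cite[Prop.~3.1.3.12]{Junge-Habilitationschrift}, which gives $q\nu_p^o(T)\le\pi_p^o(T)$ precisely for maps $T:V\to M_n$; combined with $q\nu_p^o(T)=\nu_p^o(T)$ for injective codomain, this finishes the proof. To salvage your route you would need a genuine additional argument exploiting the finite dimensionality of the target --- for instance, writing the back map on the ultraproduct as an ultralimit of maps in $M_n(\mathcal{S}_{p'}(H))$ and then proving a point-weak closedness statement for the $\nu_p^o$-unit ball of $\CB(V,M_n)$ (note that Lemma~\ref{lemma-ball-is-point-weak-closed} does not apply, since it assumes a finite-dimensional \emph{domain}) --- none of which appears in your proposal.
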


\begin{proof}
By Lemma \ref{lemma:cuasi-p-nuclear es completely p-summing}, $\pi_p^o(T) \le q\nu_p^o(T)$.
On the other hand, \cite[Prop. 3.1.3.12]{Junge-Habilitationschrift} gives $ q\nu_p^o(T) \le \pi_p^o(T)$ (though there the notation $\tilde\pi_p(T)$ is used instead of $q\nu_p^o(T)$, see \cite[Def. 3.1.3.5]{Junge-Habilitationschrift}).
Since $T$ has codomain $M_n$ we have $q\nu_p^o(T) = \nu_p^o(T)$, which yields the desired equality. 
\end{proof}

Now we are ready to proof the announced result.

\begin{theorem}\label{thm-adjoint-of-p-summing}
Let $V,W$ be operator spaces, $1\le p < \infty$, and $T : V \to W$ a linear map.
Consider the statements:
\begin{enumerate}[(i)]
    \item $T$ is completely $p$-summing.
    \item $T'$ maps relatively operator compact matrix sets over $W'$ to relatively operator $p$-compact matrix sets over $V'$. 
\end{enumerate}
Then $(i) \Rightarrow (ii)$ always holds, whereas $(ii) \Rightarrow (i)$ does under the additional assumption that $W'$ is $\lambda$-subcoexact for some $\lambda$.
\end{theorem}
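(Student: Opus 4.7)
For the implication $(i) \Rightarrow (ii)$, I will reduce the problem to showing that for every $v \in \mathcal{S}_\infty[W']$ the composition $T'\Theta^v : \mathcal{S}_1 \to V'$ is operator $p$-compact, since any relatively operator compact matrix set over $W'$ is contained in an image $\Theta^v(\mathbf{B}_{\mathcal{S}_1})$ of this form. The key observation is that, via the identification $\mathcal{S}_\infty[W'] = \mathcal{S}_\infty \otimes_{\min} W' \subseteq \CB(W, \mathcal{S}_\infty)$, the element $v$ transposes to a completely bounded map $\tilde v : W \to \mathcal{S}_\infty$, and a direct computation with the trace duality $\mathcal{S}_\infty' = \mathcal{S}_1$ gives $(\tilde v T)' = T'\Theta^v$. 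Thus it suffices to show that $U := \tilde v T : V \to \mathcal{S}_\infty$ is quasi completely $p$-nuclear, since then Proposition \ref{prop:adjoint-of-p-compact}(b) yields the desired operator $p$-compactness of $U' = T'\Theta^v$.

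To establish the quasi nuclearity of $U$, I will invoke Pisier's Pietsch-type factorization theorem for completely $p$-summing maps \cite[Chap. 5]{Pisier-Asterisque}. Since $U$ is completely $p$-summing by the ideal property of $\Pi_p^o$, this factorizes $\iota_{\mathcal{S}_\infty} U : V \to \mathcal{B}(\ell_2) = \mathcal{S}_\infty''$ through a two-sided multiplication map $M(a,b) : \mathcal{B}(H) \to \mathcal{S}_p(H)$ with $a,b \in \mathcal{S}_{2p}$. In view of Lemma \ref{lemma:multiplication-operator-on-B(ell_2)} this is precisely a completely $p$-nuclear factorization, and since $\mathcal{B}(\ell_2)$ is injective, $U$ is quasi completely $p$-nuclear, completing this direction.

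For the implication $(ii) \Rightarrow (i)$, I first apply the closed graph theorem to the linear map $v \mapsto T'\Theta^v$ from $\mathcal{S}_\infty[W']$ into the Banach ideal $\mathcal{K}_p^o(\mathcal{S}_1, V')$, producing a constant $C$ with $\kappa_p^o(T'\Theta^v) \le C \n{v}_{\mathcal{S}_\infty[W']}$ for every $v$. For any finite-dimensional subspace $F \subseteq W'$, the subcoexactness hypothesis furnishes a finite-dimensional $F_0 \supseteq F$ that is $\lambda$-completely isomorphic to a quotient $\mathcal{S}_1^n/E$; composing the quotient $\mathcal{S}_1^n \to F_0$, the inclusion $i_{F_0} : F_0 \hookrightarrow W'$, and a canonical projection $\mathcal{S}_1 \to \mathcal{S}_1^n$, I realize $\lambda^{-1} i_{F_0}(\mathbf{B}_{F_0})$ inside $\Theta^v(\mathbf{B}_{\mathcal{S}_1})$ for some $v$ of $\cb$-norm at most one, giving $\kappa_p^o(T' i_{F_0}) \le \lambda C$. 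The restriction map $R_{F_0} : W \to F_0'$ defined by $R_{F_0}(w)(\xi) = \xi(w)$ satisfies $R_{F_0}' = i_{F_0}$ under $F_0'' = F_0$, so Proposition \ref{prop:adjoint-of-p-compact}(b) yields $q\nu_p^o(R_{F_0}T) \le \lambda C$. Because $F_0$ is $\lambda$-coexact, $F_0'$ embeds $\lambda$-completely into some $M_n$; Lemma \ref{lemma-p-summing-coincides-with-p-nuclear}, which gives $\pi_p^o = \nu_p^o$ for maps into $M_n$, then upgrades this to $\pi_p^o(R_{F_0}T) \le \lambda^2 C$.

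The final step exploits the duality $\mathcal{S}_p^n[W]' = \mathcal{S}_{p'}^n[W']$: expressing $\n{(Tx_{ij})}_{\mathcal{S}_p^n[W]}$ as a supremum over test elements $(y_{ij})$ in the unit ball of $\mathcal{S}_{p'}^n[W']$, and choosing for each test a $\lambda$-coexact $F_0$ whose containing subspace of $W'$ includes all its entries, the pairing $\sum \langle Tx_{ij}, y_{ji}\rangle$ factors through $R_{F_0}T$ and is controlled by $\pi_p^o(R_{F_0}T) \le \lambda^2 C$. Taking suprema gives $\pi_p^o(T) \le \lambda^2 C < \infty$. The main obstacle lies in this last paragraph: without the subcoexactness of $W'$ one can only assert the \emph{quasi} completely $p$-nuclearity of the restrictions $R_{F_0}T$, which is strictly weaker than completely $p$-summability of $T$; the subcoexactness is exactly what allows passage to an injective codomain $M_n$ with controlled loss, activating Lemma \ref{lemma-p-summing-coincides-with-p-nuclear} uniformly in $F_0$ and thereby bridging the gap.
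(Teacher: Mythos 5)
Your reduction in $(i)\Rightarrow(ii)$ to showing that $U=\tilde v T$ is quasi completely $p$-nuclear, via the identity $(\tilde v T)'=T'\Theta^v$ and Proposition \ref{prop:adjoint-of-p-compact}(b), is exactly the right frame and matches the paper. The gap is in how you establish the quasi nuclearity of $U$. Pisier's Pietsch-type theorem for a completely $p$-summing map defined on a general operator space $V\subseteq\mathcal{B}(H)$ does \emph{not} produce a factorization through a single multiplication $M(a,b):\mathcal{B}(H)\to\mathcal{S}_p(H)$: it gives a domination of the form $\n{U(x)}\le C\lim_{\mathcal{U}}\n{a_i x b_i}_{\mathcal{S}_p(H)}$ over a net of pairs, i.e.\ a factorization through a subspace of an ultraproduct of $\mathcal{S}_p$-spaces. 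Collapsing that net to a single pair is precisely the step that fails in general: if it were possible, every completely $p$-summing map would be quasi completely $p$-nuclear (hence completely compact, since $M(a,b)$ with $a,b\in\mathcal{S}_{2p}$, $p<\infty$, is a $\cb$-limit of finite-rank maps), and the distinction the paper maintains between $\pi_p^o$ and $q\nu_p^o$ --- with equality asserted only for codomain $M_n$ in Lemma \ref{lemma-p-summing-coincides-with-p-nuclear} --- would collapse. The paper avoids this by exploiting the special structure of $\tilde v$: since $v\in\mathcal{S}_\infty\widehat\otimes_{\min}W'$ is the norm limit of its truncations, the truncated maps $S_NT$ have codomain $M_N$, where Junge's identity $\pi_p^o=\nu_p^o$ does apply, and one then shows that $(P_NST)_N$ is Cauchy in the completely $p$-nuclear norm with limit $ST$. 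Your proof is missing this step or any substitute for it, so the forward implication is not established.

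The direction $(ii)\Rightarrow(i)$ is essentially the paper's argument: the same closed-graph bound, the same use of subcoexactness to realize $\mathbf{B}_{F_0}$ inside an operator compact image up to the factor $\lambda$, and the same passage through Proposition \ref{prop:adjoint-of-p-compact}(b). Two remarks. Your detour through an embedding of $F_0'$ into $M_n$ is unnecessary and costs a spurious extra factor of $\lambda$: quasi complete $p$-nuclearity of $R_{F_0}T$ already gives $\pi_p^o(R_{F_0}T)\le q\nu_p^o(R_{F_0}T)$ by Lemma \ref{lemma:cuasi-p-nuclear es completely p-summing}. In your final duality step you must also justify that a test element $(y_{ij})$ in the unit ball of $\mathcal{S}_{p'}^n[W']$ whose entries lie in $F_0$ still has $\mathcal{S}_{p'}^n[F_0]$-norm at most one; this is not automatic because $\mathcal{S}_{p'}^n[\,\cdot\,]$ is not injective, but it does hold here since $F_0=L^\perp$ is weak$^*$ closed and $\mathcal{S}_{p'}^n[L^\perp]=\big(\mathcal{S}_p^n[W]/\mathcal{S}_p^n[L]\big)'$. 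The paper sidesteps this point entirely by quoting the maximality of the ideal $\Pi_p^o$ and reducing to the compositions $Q^W_LTi^V_M$.
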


\begin{proof}
$(i) \Rightarrow (ii)$: Suppose that $T \in \Pi_p^o(V,W)$. It is enough to prove that $T'(\Theta^{w'}(\mathbf{B}_{\mathcal{S}_1}))$ is relatively operator $p$-compact for any 
$w' = (w'_{ij})_{i,j=1}^\infty \in \mathcal{S}_\infty[W']$.  For a given $w'$, define $S : W \to \mathcal{S}_\infty$ by $w \mapsto (w'_{ij}(w))_{i,j=1}^\infty$.
Note that $S$ is completely bounded with $\|S\|_{\cb}=\|w'\|_{\mathcal{S}_\infty[W']}$ because $\mathcal{S}_\infty[W'] = \mathcal{S}_\infty \otimes_{\min} W' \subseteq \CB(W, \mathcal{S}_\infty)$. We begin by showing that $ST$ is completely $p$-nuclear. For that, consider, for each $N\in\N$, the truncation map $S_N : W' \to M_N$ by $w \mapsto (w'_{ij}(w))_{i,j=1}^N$.
It follows from Lemma \ref{lemma-p-summing-coincides-with-p-nuclear} that $\nu^o_p(S_NT) = \pi^o_p(S_NT) \le \n{S_N}_{\cb}\pi_p^o(T) \le \n{w'}_{\mathcal{S}_\infty[W']} \pi_p^o(T)$.
Therefore, if for every $N\in\N$ we denote by $P_N :\mathcal{S}_\infty \to \mathcal{S}_\infty$ and $j_N : M_N \to \mathcal{S}_\infty$ the truncation and the inclusion onto into the initial $M_N$, respectively, by the same argument as above if $N_2>N_1$, denoting by $j_{N_1,N_2} : M_{N_1} \to M_{N_2}$ the canonical inclusion,
\begin{multline*}
\nu_p^o(P_{N_2}ST - P_{N_1}ST) = 
\nu_p^o\big( j_{N_2}(S_{N_2} -  j_{N_1,N_2}S_{N_1} )T \big) \\
\le \nu_p^o\big( (S_{N_2} - j_{N_1,N_2}S_{N_1} )T \big) \le \n{ S_{N_2}-j_{N_1,N_2}S_{N_1} }_{\cb} \pi_p^o(T).
\end{multline*}
 This shows that the sequence $( P_NST )_{N=1}^\infty$ is Cauchy in $\mathcal{N}_p^o(V,\mathcal{S}_\infty)$, because a matrix in $\mathcal{S}_\infty[W']$ is the limit of its truncations,
so $( P_NST )_{N=1}^\infty$ converges in $\mathcal{N}_p^o(V,\mathcal{S}_\infty)$ to a limit (since it is a complete space).
This limit must be $ST$, which is therefore completely $p$-nuclear, and moreover since $\nu^o_p(P_NST) \le \n{w'}_{\mathcal{S}_\infty[W']} \pi_p^o(T)$ we also have $\nu^o_p(ST) \le \n{w'}_{\mathcal{S}_\infty[W']} \pi_p^o(T)$.

By Proposition \ref{prop:adjoint-of-p-compact} (b), the adjoint $(ST)' : \mathcal{S}_1 \to V'$ is an operator $p$-compact mapping with $\kappa_p^o\big((ST)'\big) \le \n{w'}_{\mathcal{S}_\infty[W']} \pi_p^o(T)$.
Note that for every matrix unit $E_{ij} \in \mathcal{S}_1$ 
we have $(ST)'(E_{ij}) = T'w'_{ij}$, that is, $(ST)'$ is precisely the operator $\Theta^{T'w'}$ from $\mathcal{S}_1$ to $V'$ associated to $T'w'$ 
. Therefore, $(ST)'(\mathbf{B}_{\mathcal{S}_1})$ is an operator $p$-compact matrix set over $V'$ and moreover $\kappa_p^o\left( (ST)'(\mathbf{B}_{\mathcal{S}_1}) \right) \le \n{w'}_{\mathcal{S}_\infty[W']} \pi_p^o(T)$.
Since
\[
T'(\Theta^{w'}(\mathbf{B}_{\mathcal{S}_1})) = \Theta^{T'w'}(\mathbf{B}_{\mathcal{S}_1}) = (ST)'(\mathbf{B}_{\mathcal{S}_1}),
\]
it follows that $T'$ maps relatively operator compact matrix sets over $W'$ to relatively operator $p$-compact matrix sets over $V'$.

$(ii) \Rightarrow (i)$:
Since the ideal $\Pi_p^o$ is maximal \cite[Ex. 8.3.(iii)]{ChaDiGa-tensor-norms}, it suffices to find a uniform estimate on the completely $p$-summing norms of
\[
Q^W_L  T  i^V_M : M \to V \to W \to W/L
\]
for $M \subseteq V$ finite-dimensional and $L \subseteq W$ finite-codimensional,
where $Q^W_L : W \to W/L$ and $i^V_M : M \to V$ are the canonical quotient and inclusion maps, respectively.
Taking adjoints,
\[
(i^V_M)'  T'  (Q^W_L)' : L^\perp \to W' \to V' \to M'.
\]
Note that by assumption, the map
\[
\mathcal{K}_\infty^o(\mathcal{S}_1, W') \to  \mathcal{K}_p^o(\mathcal{S}_1, V'), \qquad S \mapsto T'S 
\]
is well defined.
By a straightforward closed graph argument it is continuous,
so there exists a constant $C>0$ such that for any $S \in \mathcal{K}_\infty^o(\mathcal{S}_1, W')$ we have $\kappa_p^o(T'S)\le C \kappa_\infty^o(S)$.
 
Since $W'$ is $\lambda$-subcoexact for some $\lambda$ and $L^\perp \subset W'$ is finite-dimensional, given $\varepsilon>0$ there exist $N\in\N$ and $\widetilde{L} \subseteq W'$ finite-dimensional containing $L^\perp$ and such that $\widetilde{L}$ is $(\lambda+\varepsilon)$-completely isomorphic to a quotient of $\mathcal{S}_1^N$. So, there exists a complete contraction $\rho : \mathcal{S}_1^N \to \widetilde{L}$ such that 
$\rho(\mathbf{B}_{\mathcal{S}^N_1}) \supseteq (\lambda+\varepsilon)^{-1} \mathbf{B}_{\widetilde{L}}$.
Crucially, note that $\rho$ is operator compact.
Indeed, since $\mathcal{S}_1^N$ is finite-dimensional and $\CB(\mathcal{S}_1^N,W) \equiv M_N(W)$ 
then $\rho = \Theta^w$ for some $w \in M_N(W)$ of norm one, from where it follows that $\kappa_\infty^o(\rho) = \n{\rho}_{\cb} = 1$.
Now define $Q = i^{W'}_{\widetilde{L}}\rho P_N : \mathcal{S}_1 \to W'$ where $P_N : \mathcal{S}_1 \to \mathcal{S}_1^N$ is the projection onto the initial $N\times N$ block.
Since $Q$ is a finite-rank map it is operator compact 
, and therefore by the choice of $C$ we have $\kappa_p^o(T'(Q^W_L)'Q) \le C \kappa_\infty^o((Q^W_L)'Q) \le  C \kappa_\infty^o(Q) \le C  \kappa_\infty^o(\rho) =C $.
Therefore, there exists $x'\in \mathcal{S}_p[V']$ such that $\n{x'}_{\mathcal{S}_p[V']} \le (C+\varepsilon)$ and $(T'(Q^W_L)'Q)(\mathbf{B}_{\mathcal{S}_1}) \subseteq \Theta^{x'}(\mathbf{B}_{\mathcal{S}_p'})$.
Since $Q(\mathbf{B}_{\mathcal{S}_1}) \supseteq (\lambda+\varepsilon)^{-1} \mathbf{B}_{L^\perp}$,
it then follows that $T'(Q^W_L)'(\mathbf{B}_{L^\perp}) \subseteq (\lambda+\varepsilon)\Theta^{x'}(\mathbf{B}_{\mathcal{S}_p'})$.
This implies
$\kappa_p^o( T'(Q^W_L)') \le (\lambda+\varepsilon)\n{x'}_{\mathcal{S}_p[V']}$ and therefore
$\kappa_p^o( (i^V_M)'T'(Q^W_L)') \le (\lambda+\varepsilon)\n{x'}_{\mathcal{S}_p[V']}$.
It then follows from Proposition \ref{prop:adjoint-of-p-compact}.(b)
that the mapping $Q^W_L T  i^V_M \in \mathcal{QN}^o_p(M,W/L)$ and also $q\nu_p^o( Q^W_L T  i^V_M ) \le (\lambda+\varepsilon)\n{x'}_{\mathcal{S}_p[V']}$.
Additionally, according to Lemma \ref{lemma:cuasi-p-nuclear es completely p-summing}, $Q^W_L T i^V_M$ is completely $p$-summing, with $\pi_p^o(Q^W_L T i^V_M) \le (\lambda+\varepsilon)\n{x'}{\mathcal{S}_p[V']} \le (\lambda+\varepsilon)(C+\varepsilon)$, thus concluding the proof.

\end{proof}

\begin{remark}
The proof above shows that if $T : V \to W$ is completely $p$-summing and $\mathbf{K}$ is an operator compact matrix set over $W'$, then $\frak m_p^o(T'(\mathbf{K})) \le \frak m_\infty^o(\mathbf{K}) \pi_p^o(T)$.
Also, the map $\mathcal{S}_\infty[W'] \to \mathcal{K}_p^o(\mathcal{S}_1,V')$ given by $A \mapsto (AT)'$ is well-defined and has norm at most $\pi^o_p(T)$.
\end{remark}

\section{Regularity of the ideal of operator $p$-compact mappings}\label{sec-regularity}

Recall that a mapping ideal is called \emph{regular} if $T : V \to W$ is in the ideal if and only if $\iota_W \circ T$ also is, where $\iota_W : W \to W''$ is the canonical injection.
In the Banach case, the ideal of $p$-compact mappings is regular \cite[Cor. 3.6]{Delgado-Pineiro-Serrano-adjoints}, even with equality of the norms \cite[Cor. 2.6]{Galicer-Lassalle-Turco}. As a consequence, a subset of a Banach space is relatively $p$-compact if and only if it is relatively $p$-compact in the bidual \cite[Thm. 2.4]{Galicer-Lassalle-Turco}.

In the operator space setting, we are not able to obtain the same type of results in full generality. However, in this section we prove analogues of those results but with additional conditions such as local reflexivity, which is of course not a surprise.
Our approach follows that of \cite{Pietsch-p-compact}.

We begin with several lemmas. The first of these is proved similarly as in  \cite[Lem. E.3.2]{Pietsch-Operator-Ideals}.

\begin{lemma}\label{lemma-Pietsch-E-3-2}
Suppose that $V_0$ is finite-dimensional, $A \in \CB(V', V_0')$, $T \in \mathcal{F}(W',V')$, and $V$ is strongly locally reflexive. Then, given $\varepsilon>0$ there exists $S \in \CB(V_0,V)$ such that $\n{S}_{\cb} \le (1+\varepsilon)\n{A}_{\cb}$ and $S'T = AT$.   
\end{lemma}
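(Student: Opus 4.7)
The plan is to reduce the statement to a direct application of strong local reflexivity, applied to the adjoint $A'$.

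First, since $V_0$ is finite dimensional, the canonical embedding $\iota_{V_0}$ identifies $V_0$ completely isometrically with $V_0''$. Thus we may view the adjoint as a completely bounded map $A' : V_0 \to V''$ with $\|A'\|_{\cb} = \|A\|_{\cb}$. Similarly, because $T \in \mathcal{F}(W',V')$ has finite rank, the subspace $N := T(W') \subseteq V'$ is finite dimensional. Set also $F := A'(V_0) \subseteq V''$, which is finite dimensional. These two spaces $F \subseteq V''$ and $N \subseteq V'$ are precisely what strong local reflexivity needs as input.

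Fix $\varepsilon' > 0$ with $(1+\varepsilon') \le (1+\varepsilon)$. Applying the hypothesis that $V$ is strongly locally reflexive to the pair $(F, N)$, we obtain a complete isomorphism $\Phi : F \to E \subseteq V$ with $\|\Phi\|_{\cb},\|\Phi^{-1}\|_{\cb} < 1+\varepsilon'$ and, crucially, $\langle \Phi x, y \rangle = \langle x, y \rangle$ for every $x \in F$ and $y \in N$. Define
\[
S := \Phi \circ A' : V_0 \longrightarrow V.
\]
The norm estimate follows immediately: $\|S\|_{\cb} \le \|\Phi\|_{\cb} \|A'\|_{\cb} \le (1+\varepsilon)\|A\|_{\cb}$.

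It remains to verify the equality $S'T = AT$ of maps $W' \to V_0'$. Pick any $w' \in W'$ and any $v_0 \in V_0$; then $A' v_0 \in F$ and $T w' \in N$, so by property (b) of strong local reflexivity,
\[
\langle v_0,\, S'(Tw') \rangle = \langle Sv_0,\, Tw' \rangle = \langle \Phi(A' v_0),\, Tw' \rangle = \langle A' v_0,\, Tw' \rangle = \langle v_0,\, A(Tw') \rangle,
\]
where the last equality is the definition of the adjoint. Since this holds for all $v_0 \in V_0$, we conclude $S'(Tw') = A(Tw')$ for every $w' \in W'$, which is exactly $S'T = AT$.

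There is no real obstacle here once the setup is right; the only delicate point is the bookkeeping between $A$, $A'$, the canonical identification $V_0 = V_0''$, and the various duality pairings, all of which are standard. The lemma is essentially a structural repackaging of strong local reflexivity, with the finite-rank hypothesis on $T$ being exactly what is needed to produce the finite-dimensional subspace $N \subseteq V'$ on which we ask $S'$ and $A$ to agree.
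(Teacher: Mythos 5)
Your proof is correct and follows essentially the same route as the paper, which simply defers to the classical argument of Pietsch's Lemma E.3.2: pass to the adjoint $A' : V_0 = V_0'' \to V''$, apply strong local reflexivity to the finite-dimensional pair $F = A'(V_0) \subseteq V''$ and $N = T(W') \subseteq V'$, and set $S = \Phi \circ A'$. The norm estimate and the verification of $S'T = AT$ via property (b) of strong local reflexivity are exactly as intended.
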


\begin{lemma}\label{lemma-Pietsch-10-2-5}
Suppose that $W$ has CMAP, $T \in \mathcal{F}(V,W)$  and $\varepsilon>0$. Then there exists $A \in\mathcal{F}(W,W)$ such that $\n{A}_{\cb} \le 1+\varepsilon$ and $AT=T$.  
\end{lemma}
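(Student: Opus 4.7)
The plan is to reduce the claim to finding a finite-rank $A \in \mathcal{F}(W,W)$ with $\|A\|_{\cb} \le 1+\varepsilon$ that acts as the identity on the finite-dimensional subspace $F := T(V) \subseteq W$; once such an $A$ is constructed, $AT = T$ holds automatically. To build $A$, I would first fix a linear basis $w_1,\dots,w_n$ of $F$ together with the corresponding dual basis $\alpha_1,\dots,\alpha_n \in F'$ satisfying $\alpha_i(w_j) = \delta_{ij}$, then use Hahn--Banach to extend each $\alpha_i$ to $\tilde\alpha_i \in W'$ with $\|\tilde\alpha_i\| = \|\alpha_i\|$ (recall that for scalar-valued functionals the cb-norm equals the ordinary norm). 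Set $M := \max_i \|\alpha_i\|$.

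Next, I would invoke CMAP to obtain a net of complete contractions $(A_\beta) \subset \mathcal{F}(W,W)$ converging pointwise to $\text{Id}_W$, and pick some $A_0 = A_\beta$ with $\|A_0 w_i - w_i\| < \varepsilon/(nM)$ for all $i$. Define the correction
\[
C \ := \ \sum_{i=1}^n \tilde\alpha_i \otimes (w_i - A_0 w_i) \ \in \ \mathcal{F}(W,W).
\]
For a scalar functional $\phi \in W'$ and a vector $z \in W$, the rank-one map $\phi \otimes z: W \to W$ satisfies $\|\phi \otimes z\|_{\cb} = \|\phi\|\,\|z\|$; combined with the triangle inequality this gives $\|C\|_{\cb} < \varepsilon$.

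Set $A := A_0 + C$. Then $\|A\|_{\cb} \le \|A_0\|_{\cb} + \|C\|_{\cb} \le 1 + \varepsilon$, and evaluating at each basis vector yields
\[
A w_j \ = \ A_0 w_j + \sum_{i=1}^n \delta_{ij}(w_i - A_0 w_i) \ = \ w_j,
\]
so $A|_F = \text{Id}_F$ and hence $AT = T$, as desired.

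The only delicate point is that the correction $C$ be cb-small rather than only norm-small; this is handled cleanly by the two observations above, namely that scalar functionals have cb-norm equal to their ordinary norm and that rank-one maps admit the clean factorization $\|\phi \otimes z\|_{\cb} = \|\phi\|\,\|z\|$. No deeper operator space machinery such as local reflexivity or injectivity of an enveloping space is required here, precisely because the correction never demands extending a non-scalar-valued map from $F$ into $W$.
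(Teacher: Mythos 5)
Your proof is correct and follows essentially the same route as the paper: both express the identity on $T(V)$ via a (Hahn--Banach extended) dual basis, use CMAP to pick a finite-rank complete contraction close to the identity on that basis, and then add a rank-one correction of small cb-norm. The only difference is that the paper delegates the last step to a "standard perturbation argument" (citing Pisier, Lem.\ 2.13.2), whereas you write the correction $C=\sum_i \tilde\alpha_i\otimes(w_i-A_0w_i)$ explicitly, including the key observation that $\|\phi\otimes z\|_{\cb}=\|\phi\|\,\|z\|$ for rank-one maps.
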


\begin{proof}
For the finite-dimensional subspace $G=T(V)$ of $W$ we have that the identity on $G$ can be written as $I_G=\sum_{j=1}^n w_j'\otimes w_j$, for certain $w_1,\dots w_n\in G$, $w_1'\dots, w_n'\in W'$. Since $W$ has CMAP, there  exists a mapping $S \in \mathcal{F}(W,W)$ whose $\cb$-norm is less than or equal to 1 and such that $\sum_{j=1}^n \|w_j'\| \|w_j - Sw_j\|\le \varepsilon$.
By standard perturbation arguments \cite[Lem. 2.13.2]{Pisier-OS-theory}, there exists a mapping $A \in \mathcal{F}(W,W)$ whose $\cb$-norm is less than $1+\varepsilon$ and which is the identity on $G$, and we are done.     
\end{proof}

\begin{lemma}\label{lemma-Pietsch-10-2-6}
Suppose that $V'$ has CMAP and $V$ is strongly locally reflexive. Let $T \in \mathcal{F}(V,W)$ and $\varepsilon>0$.
Then there exists $S \in \mathcal{F}(V,V)$ such that $\n{S}_{\cb} \le 1+\varepsilon$ and $TS=T$.
\end{lemma}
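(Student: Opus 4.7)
The plan is to dualize and apply Lemma \ref{lemma-Pietsch-10-2-5} to the adjoint $T' \in \mathcal{F}(W',V')$ in order to obtain a finite-rank approximation $A: V' \to V'$ of the identity near the range of $T'$; then use strong local reflexivity of $V$ (via Lemma \ref{lemma-Pietsch-E-3-2}) to realize $A$ as essentially the adjoint of a finite-rank map $S: V \to V$ satisfying $S'T' = T'$, which is equivalent to $TS = T$.

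Fix $\delta > 0$ with $(1+\delta)^2 \le 1+\varepsilon$. Applying Lemma \ref{lemma-Pietsch-10-2-5} to $T': W' \to V'$ (whose codomain has CMAP by assumption) gives $A \in \mathcal{F}(V',V')$ with $\n{A}_{\cb} \le 1+\delta$ and $AT' = T'$. Let $F := \mathrm{range}(A) \subseteq V'$ (finite-dimensional) and factor $A = i_F \circ A_0$ with $A_0: V' \to F$ and $i_F: F \hookrightarrow V'$; the equation $AT' = T'$ forces $\mathrm{range}(T') \subseteq F$. Setting $V_0 := F'$ and using that $\iota_F : F \to F'' = V_0'$ is a completely isometric isomorphism (since $F$ is finite-dimensional), the map $\tilde{A} := \iota_F \circ A_0 \in \CB(V', V_0')$ satisfies $\n{\tilde A}_{\cb} = \n{A}_{\cb}$. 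Lemma \ref{lemma-Pietsch-E-3-2}, applied to $\tilde A$ and $T'$, then yields $S_0 \in \CB(F', V)$ with $\n{S_0}_{\cb} \le (1+\delta)^2$ and $S_0' T' = \tilde A T'$ as maps $W' \to V_0'$.

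To assemble the desired $S: V \to V$, introduce the canonical restriction $R_F: V \to F'$ defined by $R_F(v) := (i_F)'(\iota_V(v))$, which is a complete contraction (as the composition of a complete isometry with a complete quotient). Define $S := S_0 \circ R_F \in \mathcal{F}(V,V)$; then $\n{S}_{\cb} \le (1+\delta)^2 \le 1+\varepsilon$. The identity $TS = T$ is equivalent to $S'T' = T'$, which follows from the computation
\[
S' T' \;=\; R_F' \, S_0' T' \;=\; R_F'\, \iota_F\, A_0\, T' \;=\; i_F\, A_0\, T' \;=\; A T' \;=\; T',
\]
where the key algebraic fact $R_F' \circ \iota_F = i_F$ is immediate from unwinding the definition of $R_F$. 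The main bookkeeping obstacle is the identification $F'' \cong F$ and tracking how the output $S_0: F' \to V$ of Lemma \ref{lemma-Pietsch-E-3-2} gets promoted to a map $V \to V$ via $R_F$; once the setup is arranged, the remaining ingredients are routine.
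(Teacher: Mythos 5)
Your proposal is correct and follows essentially the same route as the paper: apply Lemma \ref{lemma-Pietsch-10-2-5} to $T'$ to get $A$ with $AT'=T'$, then use Lemma \ref{lemma-Pietsch-E-3-2} to descend to a finite-rank $S$ on $V$ with $S'T'=T'$. The only difference is that you make explicit the bookkeeping (factoring $A$ through its range $F$, identifying $F\cong F''$, and precomposing with $R_F$ so that the output of Lemma \ref{lemma-Pietsch-E-3-2} becomes a map $V\to V$) that the paper's terse proof leaves implicit, and your identity $R_F'\circ\iota_F=i_F$ checks out.
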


\begin{proof}
Let $\delta>0$ such that $(1+\delta)^2 \le 1+\varepsilon$.
By Lemma \ref{lemma-Pietsch-10-2-5} applied to $T'$, we can find $A \in \mathcal{F}(V',V')$ such that $\n{A}_{\cb} \le 1+\delta$ and $AT' =T'$.
By Lemma \ref{lemma-Pietsch-E-3-2}, there exists $S \in\mathcal{F}(V,V)$ such that $\n{S}_{\cb} \le (1+\delta)\n{A}_{\cb}$ and $S'T' = T'$.
It follows that $\n{S}_{\cb} \le 1+\varepsilon$ and $TS=T$.
\end{proof}

Every finite rank mapping $T : V \to W$ is clearly completely right $p$-nuclear. In this case, we have a finite version of the right $p$-nuclear norm which considers just factorizations through finite dimensional Schatten spaces. So, we define $\nu^{p,\fin}_o(T)$ as the infimum of $\n{R}_{\cb} \n{S}_{\cb} \n{a}_{\mathcal{S}^n_{2p}} \n{b}_{\mathcal{S}^n_{2p}}$ over all factorizations of the form
\[
\xymatrix{
V \ar[r]^T \ar[d]_{R} &W \\
\mathcal{S}^n_{p'} \ar[r]_{M(a,b)} &\mathcal{S}^n_1 \ar[u]_{S}
}
\]
where $a,b\in M_n$ and $n\in\N$. If the domain is finite-dimensional, both norms coincide:

\begin{lemma}\label{lemma-finite-right-p-nuclear}
If $V_0$  is finite-dimensional and $T \in \CB(V_0,W)$, then $\nu^{p,\fin}_o(T) = \nu^p_o(T)$.   
\end{lemma}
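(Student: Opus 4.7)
The inequality $\nu^p_o(T) \le \nu^{p,\fin}_o(T)$ holds in general for any operator spaces, with no finite-dimensionality assumption. Indeed, any finite-dimensional factorization $T = \tilde S \circ M(\tilde a, \tilde b) \circ \tilde R$ through $\mathcal{S}^n_{p'} \to \mathcal{S}^n_1$ with $\tilde a, \tilde b \in M_n$ extends to an infinite-dimensional $\nu^p_o$-factorization with the same norm, by composing $\tilde R$ with the isometric inclusion $\mathcal{S}^n_{p'} \hookrightarrow \mathcal{S}_{p'}$, viewing $\tilde a, \tilde b$ as elements of $\mathcal{S}_{2p}$ supported on the initial $n \times n$ block, and precomposing $\tilde S$ with the canonical truncation $\mathcal{S}_1 \twoheadrightarrow \mathcal{S}^n_1$; a direct check shows the resulting composition is $T$.

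For the reverse inequality under the hypothesis that $V_0$ is finite-dimensional, the central observation is the uniform bound
\[
\nu^{p,\fin}_o(U) \le C_{V_0}\, \n{U}_{\cb} \qquad \text{for all } U \in \CB(V_0, W),
\]
where $C_{V_0} < \infty$ depends only on $V_0$. To see this, fix a basis $(v_i)_{i=1}^d$ of $V_0$ with dual basis $(v_i^*) \subseteq V_0'$. Any rank-one operator $v^* \otimes w : V_0 \to W$ admits the trivial factorization $V_0 \xrightarrow{v^*} \mathcal{S}^1_{p'} = \C \xrightarrow{M(1,1)} \mathcal{S}^1_1 = \C \xrightarrow{\,\cdot\, w} W$, giving $\nu^{p,\fin}_o(v^* \otimes w) \le \n{v^*}\,\n{w}$. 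Decomposing $U = \sum_{i=1}^d v_i^* \otimes U(v_i)$ and applying the triangle inequality together with $\n{U(v_i)} \le \n{U}_{\cb} \n{v_i}$ yields the bound with $C_{V_0} = \sum_i \n{v_i^*}\n{v_i}$.

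Now, given $\varepsilon > 0$, choose a near-optimal factorization $T = S\,M(a,b)\,R$ with $\n{S}_{\cb} \n{a}_{\mathcal{S}_{2p}} \n{b}_{\mathcal{S}_{2p}} \n{R}_{\cb} < \nu^p_o(T) + \varepsilon$. Let $e_n$ be the orthogonal projection onto the span of the first $n$ basis vectors of $\ell_2$, and set $a_n = e_n a e_n$, $b_n = e_n b e_n$, and $T_n = S\,M(a_n, b_n)\,R$. Since $a_n, b_n$ are supported in the initial $n \times n$ block, $M(a_n, b_n)$ factors as $\mathcal{S}_{p'} \twoheadrightarrow \mathcal{S}^n_{p'} \to \mathcal{S}^n_1 \hookrightarrow \mathcal{S}_1$, yielding a finite-dimensional factorization of $T_n$ with $\nu^{p,\fin}_o(T_n) \le \nu^p_o(T) + \varepsilon$. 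Meanwhile, from the identity $T - T_n = S\,M(a - a_n, b)\,R + S\,M(a_n, b - b_n)\,R$ together with the standard cb-bound $\n{M(x,y) : \mathcal{S}_{p'} \to \mathcal{S}_1}_{\cb} \le \n{x}_{\mathcal{S}_{2p}} \n{y}_{\mathcal{S}_{2p}}$ and the fact that $\n{a - a_n}_{\mathcal{S}_{2p}}, \n{b - b_n}_{\mathcal{S}_{2p}} \to 0$, one concludes $\n{T - T_n}_{\cb} \to 0$. Applying the uniform bound to $T - T_n$ therefore gives $\nu^{p,\fin}_o(T - T_n) \to 0$, so
\[
\nu^{p,\fin}_o(T) \le \nu^{p,\fin}_o(T_n) + \nu^{p,\fin}_o(T - T_n) \xrightarrow[n \to \infty]{} \nu^p_o(T) + \varepsilon,
\]
and letting $\varepsilon \to 0$ completes the proof.

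The essential use of finite-dimensionality is confined to the uniform estimate $\nu^{p,\fin}_o(U) \le C_{V_0} \n{U}_{\cb}$, which is what allows the cb-convergence $T_n \to T$ to upgrade to $\nu^{p,\fin}_o$-convergence; this bound manifestly fails for infinite-dimensional $V_0$, and is the only place where the hypothesis enters. Everything else is a routine truncation computation, and hence this is the step I expect to be the main (and essentially only) nontrivial ingredient.
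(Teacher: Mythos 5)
Your proof is correct, and it takes a genuinely different route from the paper's. The paper starts from a near-optimal infinite factorization $T = S\,M(a,b)\,R$ and exploits finite-dimensionality on the \emph{range} side of $R$: since $R(V_0)$ is a finite-dimensional subspace of $\mathcal{S}_{p'}$, for $n$ large the truncation onto the initial $\mathcal{S}^n_{p'}$ is a $(1+\varepsilon)$-complete isomorphism on $R(V_0)$, and this is what converts the factorization into a finite one with only a $(1+\varepsilon)$ loss. You instead put the hypothesis to work on the \emph{domain} side, via the uniform estimate $\nu^{p,\fin}_o(U) \le C_{V_0}\n{U}_{\cb}$ on $\CB(V_0,W)$ obtained from a rank-one decomposition, and then upgrade the $\cb$-convergence of the truncated factorizations $T_n = S\,M(e_nae_n, e_nbe_n)\,R \to T$ to $\nu^{p,\fin}_o$-convergence. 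What your approach buys is a self-contained argument that makes explicit how an \emph{approximate} finite factorization yields the \emph{exact} norm identity --- a step the paper's two-line sketch leaves implicit --- at the price of introducing the (harmless, since it only multiplies a vanishing error) constant $C_{V_0}$. One dependency you should acknowledge: you invoke the triangle inequality for $\nu^{p,\fin}_o$ twice, in the rank-one decomposition and in the final estimate $\nu^{p,\fin}_o(T) \le \nu^{p,\fin}_o(T_n) + \nu^{p,\fin}_o(T-T_n)$. This does hold --- by the same weighted direct-sum argument that makes $\nu^p_o$ a mapping ideal norm, carried out with block-diagonal matrices $a_1 \oplus a_2$, $b_1 \oplus b_2$ and suitably distributed scalar weights --- but the paper establishes no properties of $\nu^{p,\fin}_o$ beyond its definition, so a sentence justifying subadditivity is needed to make your argument complete.
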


\begin{proof}
It is clear that $\nu^p_o(T) \le \nu^{p,\fin}_o(T)$, since calculating $\nu^p_o(T)$ allows for more factorizations. On the other hand, starting with a completely right $p$-nuclear factorization for $T$
\[
\xymatrix{
V_0 \ar[r]^T \ar[d]_{R} &W \\
\mathcal{S}_{p'} \ar[r]_{M(a,b)} &\mathcal{S}_1 \ar[u]_{S},
}
\]
note that $R(V_0)$ is a finite-dimensional subspace of $\mathcal{S}_{p'}$. Therefore, given $\varepsilon>0$ for $n \in \N$ large enough we have that the projection from $\mathcal{S}_{p'}$ onto the initial $\mathcal{S}^n_{p'}$
 is an $(1+\varepsilon)$ complete isomorphism when restricted to $R(V_0)$.
 This implies $\nu^{p,\fin}_o(T) \le \nu^p_o(T)$.
\end{proof}

For the  ideal of completely nuclear mappings, it is shown in  \cite[Lem. 12.2.7]{Effros-Ruan-book} that the unit ball is point-weak closed, when the domain is finite-dimensional.
With a similar argument the same can be deduced for any mapping ideal.   We include its proof for completeness.

\begin{lemma}\label{lemma-ball-is-point-weak-closed}
Let $(\mathfrak{A},\|\cdot\|_{\mathfrak{A}})$ be a mapping ideal. If $V_0$  is finite-dimensional, then for any operator space $W$ the unit ball of $\mathfrak{A}(V_0,W)$ is point-weak closed.    
\end{lemma}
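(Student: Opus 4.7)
The plan is to show that every bounded linear functional on the Banach space $(\mathfrak{A}(V_0,W),\|\cdot\|_{\mathfrak{A}})$ is automatically continuous for the point-weak topology, and then invoke Hahn--Banach. The key simplifying feature is that because $V_0$ is finite-dimensional, property (b) of a mapping ideal decomposes every operator into rank-one pieces whose $\mathfrak{A}$-norms are controlled by the Banach space norms on $V_0'$ and $W$.

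Let $\{T_\alpha\}$ be a net in the unit ball of $\mathfrak{A}(V_0,W)$ with $T_\alpha \to T$ in the point-weak topology, where $T\in\CB(V_0,W)$. Fix a basis $\{e_1,\dots,e_d\}$ of $V_0$ with dual basis $\{e_1^*,\dots,e_d^*\}\subseteq V_0'$, so that any $S\in\CB(V_0,W)$ can be written as $S=\sum_{i=1}^d e_i^*\otimes S(e_i)$. By property (b) of the mapping ideal each summand $e_i^*\otimes T(e_i)$ lies in $\mathfrak{A}(V_0,W)$, so the sum $T$ itself belongs to $\mathfrak{A}(V_0,W)$. Thus the only remaining task is the norm bound $\|T\|_{\mathfrak{A}}\le 1$.

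To obtain this, take any $\varphi\in\mathfrak{A}(V_0,W)'$ and define a bilinear form $\tilde\varphi:V_0'\times W\to\C$ by $\tilde\varphi(v',w):=\varphi(v'\otimes w)$. Property (b) yields
\[
|\tilde\varphi(v',w)|\le \|\varphi\|\,\|v'\otimes w\|_{\mathfrak{A}}=\|\varphi\|\,\|v'\|_{V_0'}\,\|w\|_W,
\]
so $\tilde\varphi$ is a bounded bilinear form. In particular, for each fixed $v'\in V_0'$ the map $w\mapsto\tilde\varphi(v',w)$ is an element of $W'$. Using the basis expansion,
\[
\varphi(T_\alpha)=\sum_{i=1}^d \tilde\varphi(e_i^*,T_\alpha(e_i)),\qquad \varphi(T)=\sum_{i=1}^d \tilde\varphi(e_i^*,T(e_i)),
\]
and since $T_\alpha(e_i)\to T(e_i)$ weakly in $W$ for every $i$, we conclude $\varphi(T_\alpha)\to\varphi(T)$. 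Therefore every element of $\mathfrak{A}(V_0,W)'$ is point-weak continuous.

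Finally, apply Hahn--Banach in the normed space $\mathfrak{A}(V_0,W)$. If $\|T\|_{\mathfrak{A}}>1$, choose $\varphi\in\mathfrak{A}(V_0,W)'$ with $\|\varphi\|\le 1$ and $\varphi(T)>1$; then the continuity just established forces $\varphi(T_\alpha)\to\varphi(T)>1$, contradicting $|\varphi(T_\alpha)|\le\|T_\alpha\|_{\mathfrak{A}}\le 1$. Hence $\|T\|_{\mathfrak{A}}\le 1$, completing the proof. The main conceptual point, and the only place where finite-dimensionality of $V_0$ is essential, is the passage from a functional on the ideal to a bounded bilinear form on $V_0'\times W$; without this one could not conclude that arbitrary functionals are point-weak continuous.
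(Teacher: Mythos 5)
Your proof is correct, and it takes a genuinely different route from the paper's. The paper argues in two stages: it first applies Mazur's theorem to the image of the unit ball under a finite evaluation map into $W\oplus_\infty\cdots\oplus_\infty W$ to show that the point-weak closure of the ball equals its point-norm closure, and then uses the rank-one decomposition $S=\sum_j v_j'\otimes S(v_j)$ together with axiom (b) to show that point-norm convergence of a bounded net already forces convergence in $\|\cdot\|_{\mathfrak{A}}$, so the limit stays in the ball. You instead dualize: the same decomposition and the same axiom (b) show that every $\varphi\in\mathfrak{A}(V_0,W)'$ is point-weak continuous (via the bounded bilinear form $\tilde\varphi$ and the fact that $w\mapsto\varphi(v'\otimes w)$ lies in $W'$), after which Hahn--Banach gives $\|T\|_{\mathfrak{A}}\le 1$. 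Both arguments rest on exactly the same two ingredients --- finite-dimensionality of $V_0$, which makes every map a finite sum of rank-one maps lying in the ideal, and item (b), which controls their $\mathfrak{A}$-norms --- but yours is shorter and isolates the cleaner underlying principle, namely that on $\mathfrak{A}(V_0,W)$ the point-weak topology is finer than the weak topology of the normed space $(\mathfrak{A}(V_0,W),\|\cdot\|_{\mathfrak{A}})$, so the norm-closed convex unit ball is point-weak closed. What the paper's longer route buys is the intermediate quantitative fact that bounded point-norm convergent nets actually converge in $\|\cdot\|_{\mathfrak{A}}$, a statement in the spirit of arguments reused elsewhere in the paper. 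Your observation that $T$ itself belongs to $\mathfrak{A}(V_0,W)$ before testing it against functionals is needed and correctly supplied, so there is no gap.
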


\begin{proof}
We first claim that the point-weak closure of  $B_{\mathfrak{A}(V_0,W)}$ concides with its point-norm closure.
Suppose that $T : V_0 \to W$ is the point-weak limit of a net $T_\alpha \in B_{\mathfrak{A}(V_0,W)}$.
For fixed $v_1, \dotsc, v_n \in V_0$, define $\Phi : \mathfrak{A}(V_0,W) \to W \oplus_\infty \cdots \oplus_\infty W$ by
\[
\Phi(S) = ( S(v_1), \dotsc, S(v_n) ).
\]
Since $T_\alpha \to T$ in the point-weak topology, it is clear that  $( T_\alpha(v_1), \dotsc, T_\alpha(v_n) ) \to ( T(v_1), \dotsc, T(v_n) )$ in the weak topology of $W \oplus_\infty \cdots \oplus_\infty W$.
Therefore, $( T(v_1), \dotsc, T(v_n) )$ is in the weak closure of $\Phi(B_{\mathfrak{A}(V_0,W)})$. Since the latter is a convex set, it follows from the classical Mazur's Theorem that $( T(v_1), \dotsc, T(v_n) )$ is in the norm closure of $\Phi(B_{\mathfrak{A}(V_0,W)})$. That is, for any $\varepsilon>0$ there exists $S \in B_{\mathfrak{A}(V_0,W)}$ such that $\n{S(v_j)-T(v_j)}<\varepsilon$ for each $j=1,\dotsc,n$.
We conclude that $T$ is in the point-norm closure of $B_{\mathfrak{A}(V_0,W)}$.

Suppose then that $T : V_0 \to W$ is a point-norm limit of a net $S_\alpha\in B_{\mathfrak{A}(V_0,W)}$.
Fix a basis $\{ v_1, \cdots, v_d\}$ of $V_0$ with biorthogonal basis $\{v'_1, \dotsc, v'_d\} \subset V_0'$.
Under the identification $\CB(V_0,W) \equiv V_0' \otimes W$, we can write
\[
S_\alpha = \sum_{j=1}^d v_j' \otimes w_{\alpha,j}, \qquad T = \sum_{j=1}^d v_j' \otimes w_{j}
\]
where $w_{\alpha,j} = S_\alpha(v_j)$ and $w_j = T(v_j)$.
Since $(S_\alpha)$ converges to $T$ in the point-norm topology, for each $1 \le j \le d$ we have
\[
\n{w_{\alpha,j} - w_j} = \n{S_\alpha(v_j) - T(v_j)} \to 0.
\]
Therefore,
\[
\|S_\alpha-T\|_{\mathfrak{A}} = \Big\| \sum_{j=1}^d v_j' \otimes (w_{\alpha,j} - w_j)  \Big\|_{\mathfrak{A}} \le \sum_{j=1}^d \| v_j' \otimes (w_{\alpha,j} - w_j) \|_{\mathfrak{A}} =   \sum_{j=1}^d \n{ v_j'}\n{ w_{\alpha,j} - w_j} \to 0,
\]
and from
\[
\|T\|_{\mathfrak{A}} \le \|S_\alpha\|_{\mathfrak{A}} + \|S_\alpha-T\|_{\mathfrak{A}} \le 1 + \|S_\alpha-T\|_{\mathfrak{A}} 
\]
we then conclude $\|T\|_{\mathfrak{A}} \le 1$ as desired.
\end{proof}

From the previous two lemmas we deduce a first step towards regularity for completely right $p$-nuclear mappings.

\begin{lemma}\label{lemma-regularity-right-p-nuclear-fin-dim-domain}
If $V_0$  is finite-dimensional, $W$ is locally reflexive, and $T \in \CB(V_0,W)$, then $\nu^p_o(T) = \nu^p_o(\iota_WT)$.   
\end{lemma}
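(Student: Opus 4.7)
The inequality $\nu^p_o(\iota_W T) \le \nu^p_o(T)$ is immediate from the ideal property together with the fact that $\iota_W$ is a complete isometry. The task is therefore to establish the reverse inequality $\nu^p_o(T) \le \nu^p_o(\iota_W T)$, and this is where local reflexivity of $W$ comes in.

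Since $V_0$ is finite-dimensional, Lemma \ref{lemma-finite-right-p-nuclear} lets us replace $\nu^p_o(\iota_W T)$ by $\nu^{p,\fin}_o(\iota_W T)$. So fix $\varepsilon>0$ and choose a factorization
\[
\xymatrix{
V_0 \ar[r]^{\iota_W T} \ar[d]_{R} &W'' \\
\mathcal{S}^n_{p'} \ar[r]_{M(a,b)} &\mathcal{S}^n_1 \ar[u]_{S}
}
\]
with $a,b\in M_n$, $R \in \CB(V_0,\mathcal{S}^n_{p'})$, $S \in \CB(\mathcal{S}^n_1,W'')$, and $\n{R}_{\cb}\n{a}_{\mathcal{S}^n_{2p}}\n{b}_{\mathcal{S}^n_{2p}}\n{S}_{\cb} \le (1+\varepsilon)\nu^p_o(\iota_W T)$. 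Since $\mathcal{S}^n_1$ is a finite-dimensional operator space and $W$ is locally reflexive, we may apply the definition of local reflexivity to $S/\n{S}_{\cb}$ to obtain a net of completely bounded maps $S_\alpha : \mathcal{S}^n_1 \to W$ with $\n{S_\alpha}_{\cb} \le \n{S}_{\cb}$ and $S_\alpha \to S$ in the point-weak$^*$ topology.

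Define $T_\alpha := S_\alpha M(a,b) R \in \CB(V_0,W)$. By the ideal property, each $T_\alpha$ lies in $\mathcal{N}^p_o(V_0,W)$ with $\nu^p_o(T_\alpha) \le \n{R}_{\cb}\n{a}_{\mathcal{S}^n_{2p}}\n{b}_{\mathcal{S}^n_{2p}}\n{S_\alpha}_{\cb} \le (1+\varepsilon)\nu^p_o(\iota_W T)$. Moreover, for each $v \in V_0$ the element $S_\alpha M(a,b)R v$ converges to $SM(a,b)Rv = \iota_W T v$ in the weak$^*$ topology of $W''$; since $\iota_W$ identifies $W$ as a subspace of $W''$ and the weak$^*$ topology on $W''$ restricts to the weak topology on $\iota_W(W)$, this shows that $T_\alpha \to T$ in the point-weak topology of $W$.

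Finally, because $(\mathcal{N}^p_o,\nu^p_o)$ is a mapping ideal and $V_0$ is finite-dimensional, Lemma \ref{lemma-ball-is-point-weak-closed} (applied to the scaled unit ball of radius $(1+\varepsilon)\nu^p_o(\iota_W T)$) yields $T \in \mathcal{N}^p_o(V_0,W)$ with $\nu^p_o(T) \le (1+\varepsilon)\nu^p_o(\iota_W T)$. Letting $\varepsilon \to 0$ completes the proof. The main subtlety is verifying that the point-weak$^*$ convergence of the $S_\alpha$ really passes through the composition to produce point-weak convergence of the $T_\alpha$ into $W$; this is where the fact that $\iota_W$ is a weak-to-weak$^*$ homeomorphism onto its image is used.
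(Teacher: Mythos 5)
Your proof is correct and follows essentially the same route as the paper's: reduce to a finite-dimensional factorization via Lemma \ref{lemma-finite-right-p-nuclear}, lift $S$ to a net $S_\alpha : \mathcal{S}_1^n \to W$ by local reflexivity, observe that $S_\alpha M(a,b)R \to T$ point-weakly, and conclude with the point-weak closedness of the ball from Lemma \ref{lemma-ball-is-point-weak-closed}. The only (harmless) cosmetic difference is that you normalize $S$ before invoking local reflexivity and spell out the identification of the weak$^*$ topology on $\iota_W(W)$ with the weak topology of $W$, which the paper leaves implicit.
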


\begin{proof}
Let $\varepsilon>0$. From Lemma \ref{lemma-finite-right-p-nuclear}, find a factorization
\[
\xymatrix{
V_0 \ar[r]^T \ar[d]_{R} &W \ar[r]^{\iota_W} &W''\\
\mathcal{S}^n_{p'} \ar[r]_{M(a,b)} &\mathcal{S}^n_1 \ar[ur]_{S} 
}
\]
with $\n{R}_{\cb} \n{S}_{\cb} \n{a}_{\mathcal{S}^n_{2p}} \n{b}_{\mathcal{S}^n_{2p}} \le (1+\varepsilon) \nu^p_o(\iota_W T)$.
Since $W$ is locally reflexive, there is a net of maps $S_i \in \CB(\mathcal{S}_1^n,W)$ with $\n{S_i}_{\cb} \le \n{S}_{\cb}$ such that $\iota_WS_i$ converges to $S$ in the point-weak$^*$ topology.
Therefore $\iota_WS_iM(a,b)R$ converges to $\iota_WT$ in the  point-weak$^*$ topology, which means that  $S_iM(a,b)R$ converges to $T$ in the point-weak topology.
Since 
\[
\nu^p_o(S_iM(a,b)R) \le \n{R}_{\cb} \n{S_i}_{\cb} \n{a}_{\mathcal{S}^n_{2p}} \n{b}_{\mathcal{S}^n_{2p}} \le (1+\varepsilon) \nu^p_o(\iota_W T),
\]
it follows from Lemma \ref{lemma-ball-is-point-weak-closed} that $\nu^p_o(T) \le \nu^p_o(\iota_W T)$.
Since the opposite inequality holds by the ideal property, the desired conclusion follows.
\end{proof}

With hypothesis we can see that composing a complety right $p$-nuclear mapping with the canonical inclusion on the bidual preserves the ideal norm.

\begin{proposition}\label{prop-regularity-right-p-nuclear-weakeer-assumption}
Suppose that $V'$ has CMAP, $V$ is strongly locally reflexive, and $W$ is locally reflexive.
Then the mapping $T \mapsto \iota_W T$ is an isometry from 
$\mathcal{N}^p_o(V,W)$ into $\mathcal{N}^p_o(V,W'')$.
\end{proposition}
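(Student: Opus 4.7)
The plan is to establish the nontrivial inequality $\nu^p_o(T) \le \nu^p_o(\iota_W T)$, since the reverse follows immediately from the ideal property together with $\n{\iota_W}_{\cb}=1$. The strategy is a two-stage reduction: first, reduce to finite-rank $T$ via density in the $\nu^p_o$-norm; then, for each finite-rank approximant, use Lemma \ref{lemma-Pietsch-10-2-6} to factor through a finite-dimensional subspace of $V$, at which point Lemma \ref{lemma-regularity-right-p-nuclear-fin-dim-domain} can be invoked.

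For the first reduction, start from a factorization $T = S \circ M(a,b) \circ R$ with $R \in \CB(V,\mathcal{S}_{p'})$, $S \in \CB(\mathcal{S}_1,W)$ and $a,b \in \mathcal{S}_{2p}$ whose product of norms is within $\varepsilon$ of $\nu^p_o(T)$. Truncating $a,b$ to their initial $n\times n$ blocks $a_n, b_n$ yields finite-rank maps $T_n := S \circ M(a_n,b_n) \circ R$. Using the continuity of the multiplication map in $\mathcal{S}_{2p}$ (in the spirit of the approximation argument already exploited in the proof of Lemma \ref{lemma-factorization-Pisier}, via \cite[Thm. 1.5]{Pisier-Asterisque}), writing $M(a,b) - M(a_n,b_n) = M(a-a_n,b) + M(a_n,b-b_n)$ gives $\nu^p_o(T-T_n) \to 0$. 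Consequently $\nu^p_o(T_n) \to \nu^p_o(T)$, and since $\nu^p_o(\iota_W(T-T_n)) \le \nu^p_o(T-T_n)$, also $\nu^p_o(\iota_W T_n) \to \nu^p_o(\iota_W T)$. It therefore suffices to prove $\nu^p_o(T_n) \le \nu^p_o(\iota_W T_n)$ for each finite-rank $T_n$.

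Fix such a $T_n$ and $\varepsilon>0$. Since $V'$ has CMAP and $V$ is strongly locally reflexive, Lemma \ref{lemma-Pietsch-10-2-6} produces $S_n \in \mathcal{F}(V,V)$ with $\n{S_n}_{\cb} \le 1+\varepsilon$ and $T_n S_n = T_n$. Let $V_n := S_n(V)$, a finite-dimensional subspace, and factor $S_n = i_{V_n} \widetilde{S}_n$, where $i_{V_n}:V_n\hookrightarrow V$ is the inclusion and $\widetilde{S}_n:V\to V_n$ satisfies $\n{\widetilde{S}_n}_{\cb} \le 1+\varepsilon$. The map $T_n \circ i_{V_n}:V_n \to W$ has finite-dimensional domain, and $W$ is locally reflexive, so Lemma \ref{lemma-regularity-right-p-nuclear-fin-dim-domain} gives $\nu^p_o(T_n i_{V_n}) = \nu^p_o(\iota_W T_n i_{V_n})$, which by the ideal property is bounded above by $\nu^p_o(\iota_W T_n)$. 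A second application of the ideal property yields
\[
\nu^p_o(T_n) = \nu^p_o(T_n S_n) = \nu^p_o(T_n i_{V_n} \widetilde{S}_n) \le \nu^p_o(T_n i_{V_n}) \, \n{\widetilde{S}_n}_{\cb} \le (1+\varepsilon)\,\nu^p_o(\iota_W T_n).
\]
Letting $\varepsilon \to 0$ and then $n\to\infty$ completes the proof.

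The main subtlety is the careful orchestration of the three hypotheses: CMAP of $V'$ and strong local reflexivity of $V$ are required to invoke Lemma \ref{lemma-Pietsch-10-2-6} on the finite-rank approximants, while local reflexivity of $W$ is needed to apply Lemma \ref{lemma-regularity-right-p-nuclear-fin-dim-domain} to the finite-dimensional-domain piece. The crucial point making this work is that one cannot apply Lemma \ref{lemma-Pietsch-10-2-6} to the original $T$ (which need not be finite rank); the preliminary $\nu^p_o$-density approximation $T_n \to T$ is precisely what unlocks the use of that lemma. A subordinate technical point is the verification that the finite-rank truncations $T_n$ converge to $T$ in the $\nu^p_o$-norm, which rests on standard continuity of Schatten-class multiplication operators.
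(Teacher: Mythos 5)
Your proof is correct and follows essentially the same route as the paper: reduce to finite-rank maps by density of $\mathcal{F}(V,W)$ in $\mathcal{N}^p_o(V,W)$, then handle the finite-rank case by invoking Lemma \ref{lemma-Pietsch-10-2-6} and factoring the resulting map through its finite-dimensional image so that Lemma \ref{lemma-regularity-right-p-nuclear-fin-dim-domain} applies. The only difference is cosmetic: you inline the content of Lemmas \ref{lemma-Pietsch-6-8-4-modified} and \ref{lemma-Pietsch-10-3-1-modified} and make the truncation argument for density explicit, which the paper leaves unstated.
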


The proof of the previous result follows from the following two lemmas and the fact that finite rank mappings are dense in $\mathcal{N}^p_o(V,W)$.

\begin{lemma}\label{lemma-Pietsch-6-8-4-modified}
Let $A \in\mathcal{F}(V_0,V)$ and $T \in \mathcal{N}^p_o(V,W)$.
If $W$ is locally reflexive then
$\nu^p_o(TA) \le \nu^p_o(\iota_WT) \n{A}_{\cb}$.    
\end{lemma}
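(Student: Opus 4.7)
The plan is to reduce to Lemma \ref{lemma-regularity-right-p-nuclear-fin-dim-domain}, which has already established the equality $\nu^p_o(S) = \nu^p_o(\iota_W S)$ whenever $S$ has finite-dimensional domain and $W$ is locally reflexive, and then combine it with the ideal property of $\mathcal{N}^p_o$. The key observation is that $A$ is of finite rank, so it factors canonically as $A = i_1 A_1$ where $V_1 := A(V_0)$ is a finite-dimensional subspace of $V$ endowed with the induced operator space structure, $A_1 : V_0 \to V_1$ is the corestriction of $A$, and $i_1 : V_1 \hookrightarrow V$ is the inclusion. With this choice one has $\n{i_1}_{\cb} = 1$ and $\n{A_1}_{\cb} = \n{A}_{\cb}$.

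Next I would consider the composition $T i_1 : V_1 \to W$, whose domain $V_1$ is finite-dimensional, so Lemma \ref{lemma-regularity-right-p-nuclear-fin-dim-domain} applies and yields
\[
\nu^p_o(T i_1) = \nu^p_o(\iota_W T i_1).
\]
The ideal property of $\mathcal{N}^p_o$ then gives $\nu^p_o(\iota_W T i_1) \le \nu^p_o(\iota_W T) \n{i_1}_{\cb} = \nu^p_o(\iota_W T)$. Combining these with one more application of the ideal property to the right-composition by $A_1$, I would conclude
\[
\nu^p_o(TA) = \nu^p_o(T i_1 A_1) \le \nu^p_o(T i_1) \n{A_1}_{\cb} \le \nu^p_o(\iota_W T) \n{A}_{\cb},
\]
which is exactly the desired inequality.

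I do not anticipate a serious obstacle: the argument is just a chaining of the already-proved finite-dimensional regularity (Lemma \ref{lemma-regularity-right-p-nuclear-fin-dim-domain}) with the axioms of a mapping ideal. The only point requiring mild care is to make the factorization through the finite-dimensional range so that $V_1$ inherits the operator space structure as a subspace of $V$, which is what guarantees $\n{i_1}_{\cb}=1$ and $\n{A_1}_{\cb}=\n{A}_{\cb}$; without this, the intermediate bounds would pick up an unwanted constant.
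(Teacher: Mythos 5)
Your proposal is correct and follows essentially the same route as the paper: the paper's proof also factors $A = JA_0$ through its finite-dimensional range $A(V_0)$ (with the subspace operator space structure, so $\n{J}_{\cb}=1$ and $\n{A_0}_{\cb}=\n{A}_{\cb}$), applies Lemma \ref{lemma-regularity-right-p-nuclear-fin-dim-domain} to $TJ$, and chains with the ideal property exactly as you do.
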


\begin{proof}
Consider the factorization $A = JA_0$ where $A_0 : V_0 \to A(V_0)$ is just $A$ with a smaller codomain, and $J : A(V_0) \to V$ is the inclusion. Then, since $\n{A}_{\cb} = \n{A_0}_{\cb}$ and using Lemma \ref{lemma-regularity-right-p-nuclear-fin-dim-domain},
\[
\nu^{p}_o(TA) = \nu^{p}_o(TJA_0) \le \nu^{p}_o(TJ)\n{A_0}_{\cb} = \nu^{p}_o(\iota_W TJ) \n{A}_{\cb} \le  \nu^p_o(\iota_W T) \n{A}_{\cb}.
\]
\end{proof}

\begin{lemma}\label{lemma-Pietsch-10-3-1-modified}
Suppose that $V'$ has CMAP, $V$ is strongly locally reflexive, and $W$ is locally reflexive.
Then for any $T \in \mathcal{F}(V,W)$ we have $\nu^{p}_o(T) = \nu^p_o(\iota_WT)$.
\end{lemma}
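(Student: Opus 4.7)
The plan is to combine the two preceding lemmas, Lemma \ref{lemma-Pietsch-10-2-6} (which gives the ``approximate identity from the right'' for finite rank maps on $V$) and Lemma \ref{lemma-Pietsch-6-8-4-modified} (which compares $\nu_o^p(TA)$ with $\nu_o^p(\iota_W T)\n{A}_{\cb}$ when $A$ is finite-rank into $V$). The inequality $\nu^p_o(\iota_W T) \le \nu^p_o(T)$ is immediate from the ideal property of $\mathcal{N}_o^p$, since $\iota_W$ is a complete isometry and hence a complete contraction.

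For the reverse inequality, I would fix $\varepsilon>0$. Since $V'$ has CMAP and $V$ is strongly locally reflexive, Lemma \ref{lemma-Pietsch-10-2-6} applied to $T\in\mathcal{F}(V,W)$ produces a map $S \in \mathcal{F}(V,V)$ satisfying $\n{S}_{\cb} \le 1+\varepsilon$ and $TS=T$. Now, observing that $T \in \mathcal{F}(V,W) \subseteq \mathcal{N}^p_o(V,W)$ (every finite rank map is completely right $p$-nuclear), I would apply Lemma \ref{lemma-Pietsch-6-8-4-modified}, which uses the local reflexivity of $W$, taking $V_0 = V$ and $A=S$. This yields
\[
\nu^p_o(T) = \nu^p_o(TS) \le \nu^p_o(\iota_W T)\,\n{S}_{\cb} \le (1+\varepsilon)\nu^p_o(\iota_W T).
\]
Letting $\varepsilon \to 0$ gives $\nu^p_o(T) \le \nu^p_o(\iota_W T)$, completing the proof.

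There isn't really a main obstacle here: all the heavy lifting is done in Lemmas \ref{lemma-Pietsch-10-2-6} and \ref{lemma-Pietsch-6-8-4-modified}, and the present lemma is essentially a one-line synthesis of them, so the argument should be completely routine once those two are available.
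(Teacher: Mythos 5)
Your proposal is correct and coincides with the paper's own proof: both obtain $S\in\mathcal{F}(V,V)$ with $TS=T$ and $\n{S}_{\cb}\le 1+\varepsilon$ from Lemma \ref{lemma-Pietsch-10-2-6}, then apply Lemma \ref{lemma-Pietsch-6-8-4-modified} with $V_0=V$ to get $\nu^p_o(T)=\nu^p_o(TS)\le(1+\varepsilon)\nu^p_o(\iota_WT)$, the reverse inequality being the ideal property. No differences worth noting.
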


\begin{proof}
Given $\varepsilon>0$, by Lemma \ref{lemma-Pietsch-10-2-6} there exists $A \in \mathcal{F}(V,V)$   such that $\n{A}_{\cb} \le 1+\varepsilon$ and $TA =T$.
Now, by Lemma \ref{lemma-Pietsch-6-8-4-modified},
\[
\nu^{p}_o(T) = \nu^{p}_o(TA) \le \nu^p_o(\iota_W T) \n{A}_{\cb} \le (1+\varepsilon) \nu^p_o(\iota_W T),
\]
and therefore $\nu^{p}_o(T) \le \nu^p_o(\iota_W T)$.
Since the opposite inequality always holds, we get the desired conclusion.
\end{proof}

An operator space $V$ is said to be \emph{$N$-maximal} (resp. \emph{$N$-minimal}) if for every operator space $W$ and every linear map $T : V \to W$ (resp. every linear map $T : W \to V$) we have $\n{T : V \to W}_{\cb} = \n{T_N : M_N(V) \to M_N(W)}$ (resp. $\n{T : W \to V}_{\cb} = \n{T_N : M_N(W) \to M_N(V)}$),
see \cite{OikhbergRicard2004MathAnn, LehnerPhDThesis}.
Spaces of the form $\ell_1(I; \mathcal{S}_1^N)$ are $N$-maximal: one can either argue by duality \cite[Lem. 2.4]{OikhbergRicard2004MathAnn} using the fact that $\ell_\infty(I; M_N)$ is clearly $N$-minimal by Smith's lemma \cite[Prop. 1.12]{Pisier-OS-theory}, or prove it directly.

\begin{lemma}\label{lemma-N-maximal-is-quotient-of-ell_1({S}_1^N)}
The operator space $V$ is $N$-maximal if and only if $V$ is a complete quotient of a space of the form $\ell_1(I;\mathcal{S}_1^N)$.  
\end{lemma}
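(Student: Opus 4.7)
The plan is to exploit the canonical identification $M_N(V)=\CB(\mathcal{S}_1^N,V)$ together with the characterization (recorded in the preliminaries) that a linear map is a complete $1$-quotient iff its adjoint is a completely isometric embedding.

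$(\Leftarrow)$ Assume $Q:\ell_1(I;\mathcal{S}_1^N)\twoheadrightarrow V$ is a complete $1$-quotient, and let $T:V\to W$ be linear. The universal property of quotients gives an isometric inclusion $\CB(V,W)\hookrightarrow\CB(\ell_1(I;\mathcal{S}_1^N),W)$ via $T\mapsto TQ$, so $\|T\|_{\cb}=\|TQ\|_{\cb}$. Since $\ell_1(I;\mathcal{S}_1^N)$ is $N$-maximal (noted just before the statement), $\|TQ\|_{\cb}=\|(TQ)_N\|$. Finally $Q_N:M_N(\ell_1(I;\mathcal{S}_1^N))\to M_N(V)$ is still a $1$-quotient, so $\|T_NQ_N\|=\|T_N\|$, giving $\|T\|_{\cb}=\|T_N\|$.

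$(\Rightarrow)$ Take $I=B_{M_N(V)}$ and, via $M_N(V)=\CB(\mathcal{S}_1^N,V)$, associate to each $v\in I$ a complete contraction $\hat v:\mathcal{S}_1^N\to V$. The universal property of the $\ell_1$-sum assembles the family $(\hat v)_{v\in I}$ into a complete contraction $Q:\ell_1(I;\mathcal{S}_1^N)\to V$ satisfying $Q\circ J_v=\hat v$, and $\|Q\|_{\cb}=\sup_v\|\hat v\|_{\cb}\le 1$. I claim $Q$ is a complete $1$-quotient, which will be verified by showing that the adjoint $Q':V'\to\ell_1(I;\mathcal{S}_1^N)'=\ell_\infty(I;M_N)$ is a completely isometric embedding.

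Fix $m\in\N$ and $g\in M_m(V')$, identified with a cb-map $g:V\to M_m$. A direct computation from $Q\circ J_v=\hat v$ shows that $(Q')_m(g)\in M_m(\ell_\infty(I;M_N))=\ell_\infty(I;M_m(M_N))$ has $v$-component, under the canonical shuffle $M_m(M_N)\cong M_N(M_m)=M_{mN}$, equal to $g_N(v)$. Taking the supremum over $v\in B_{M_N(V)}$,
\[
\|(Q')_m(g)\|=\sup_{v\in B_{M_N(V)}}\|g_N(v)\|_{M_{mN}}=\|g_N:M_N(V)\to M_{mN}\|.
\]
The $N$-maximality hypothesis is exactly the missing step: $\|g_N\|=\|g\|_{\cb:V\to M_m}=\|g\|_{M_m(V')}$. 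Therefore $(Q')_m$ is isometric for every $m$, so $Q'$ is a completely isometric embedding and hence $Q$ is a complete $1$-quotient.

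The only non-routine bookkeeping is the explicit description of $(Q')_m(g)$ under the shuffle $M_m(M_N)\cong M_N(M_m)$; everything else reduces to the $\ell_1/\ell_\infty$-sum duality and the cited characterization of complete $1$-quotients, with $N$-maximality entering at precisely one step to bridge $\|g_N\|$ and $\|g\|_{\cb}$.
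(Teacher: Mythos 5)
Your proposal is correct and follows essentially the same route as the paper: both directions rest on the canonical construction of a complete contraction $Q:\ell_1(I;\mathcal{S}_1^N)\to V$ indexed by the level-$N$ matrix unit ball (the paper invokes \cite[Prop.~2.12.2]{Pisier-OS-theory} truncated at level $N$, you build it explicitly), and both use $N$-maximality exactly once to upgrade control at level $N$ to control of the full $\cb$-norm, which via the duality criterion for complete $1$-quotients is the same computation as your verification that $(Q')_m$ is isometric. The backward direction is likewise the same observation, namely that complete quotients of $N$-maximal spaces are $N$-maximal.
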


\begin{proof}
Suppose that $V$ is $N$-maximal. Following the same construction as in \cite[Prop. 2.1.2.2]{Pisier-OS-theory} but stopping at level $N$, we get a space of the form $\ell_1(I;\mathcal{S}_1^N)$ and a complete contraction $Q : \ell_1(I;\mathcal{S}_1^N) \to V$ such that its $N$-th amplification $Q_N$ is a 1-quotient.
Recalling that $\ell_1(I;\mathcal{S}_1^N)$ is $N$-maximal, note that for any linear map $T : V \to W$ we have
\begin{multline*}
\n{T : V \to W}_{\cb} = \n{T_N : M_N(V) \to M_N(W)} = \n{T_N Q_N : M_N(\ell_1(I;\mathcal{S}_1^N)) \to M_N(W)}\\
= \n{(TQ)_N : M_N(\ell_1(I;\mathcal{S}_1^N)) \to M_N(W)}
= \n{TQ : \ell_1(I;\mathcal{S}_1^N) \to W}_{\cb},
\end{multline*}
which implies that $Q$ is a complete quotient.

The converse is straightforward: complete quotients of $N$-maximal spaces are themselves $N$-maximal.
\end{proof}

From all of the above, we conclude:

\begin{proposition}
If the operator space $V$ is $N$-maximal and $W$ is locally reflexive, then the mapping $T \mapsto \iota_W T$ is an isometry from 
$\mathcal{K}_p^o(V,W)$ into $\mathcal{K}_p^o(V,W'')$.
\end{proposition}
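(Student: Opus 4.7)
The approach is to reduce the statement to Proposition \ref{prop-regularity-right-p-nuclear-weakeer-assumption} (regularity for completely right $p$-nuclear mappings) by replacing $V$ with a well-chosen projective complete $1$-quotient cover dictated by the $N$-maximality hypothesis. The inequality $\kappa_p^o(\iota_W T) \le \kappa_p^o(T)$ is immediate from the ideal property, so only the reverse requires work. Assume $\iota_W T \in \mathcal{K}_p^o(V,W'')$. Lemma \ref{lemma-N-maximal-is-quotient-of-ell_1({S}_1^N)} furnishes a set $I$ and a complete $1$-quotient $Q : \tilde Z \to V$ with $\tilde Z = \ell_1(I; \mathcal{S}_1^N)$. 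This $\tilde Z$ is projective, since $\ell_1$-sums of projective spaces inherit the lifting property for completely bounded maps, and each $\mathcal{S}_1^N$ is projective.

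The key technical step is an auxiliary claim that generalizes \cite[Prop. 3.8]{ChaDiGa-Operator-p-compact}: for any projective operator space $\tilde Z$ equipped with a complete $1$-quotient $Q : \tilde Z \to V$, and any operator space $Y$, a linear map $T: V\to Y$ belongs to $\mathcal{K}_p^o(V,Y)$ if and only if $TQ \in \mathcal{N}^p_o(\tilde Z, Y)$, with $\kappa_p^o(T) = \nu^p_o(TQ)$. For the forward direction, the factorization $T = \widetilde{\Theta^v} B_0$ with $\|B_0\|_{\cb}\le 1$ from the proof of \cite[Thm. 3.11]{ChaDiGa-Operator-p-compact} combines with projectivity of $\tilde Z$ to lift $B_0 Q$ to a complete contraction $\tilde R : \tilde Z \to \mathcal{S}_{p'}$ with $\pi\tilde R = B_0 Q$; then Lemma \ref{lemma-factorization-Pisier} splits $\Theta^v = \Theta^u \circ M(a^t, b^t)$, yielding $TQ = \Theta^u \circ M(a^t,b^t)\circ \tilde R$ as the sought completely right $p$-nuclear factorization. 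For the reverse direction, a factorization $TQ = SM(a,b)R$ gives $TQ(\mathbf{B}_{\tilde Z}) \subseteq \|R\|_{\cb}\, \Theta^w(\mathbf{B}_{\mathcal{S}_{p'}})$ where $w = (\mathrm{Id}_{\mathcal{S}_p}\otimes S)z$ and $z$ witnesses the operator $p$-compactness of $M(a,b) : \mathcal{S}_{p'} \to \mathcal{S}_1$ from Example \ref{example-multiplication-is-operator-compact}; density of $Q(\mathbf{B}_{\tilde Z})$ in $\mathbf{B}_V$ together with closedness of $\Theta^w(\mathbf{B}_{\mathcal{S}_{p'}})$ (Lemma \ref{lemma-p-compact-closure}) then transfers the inclusion to $T(\mathbf{B}_V)$, finishing the claim.

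Applying this auxiliary claim to $\iota_W T$ yields $\iota_W T Q \in \mathcal{N}^p_o(\tilde Z, W'')$ with $\nu^p_o(\iota_W T Q) = \kappa_p^o(\iota_W T)$. The plan is to next invoke Proposition \ref{prop-regularity-right-p-nuclear-weakeer-assumption} on the pair $(\tilde Z, W)$ to produce $TQ \in \mathcal{N}^p_o(\tilde Z, W)$ with $\nu^p_o(TQ) = \nu^p_o(\iota_W T Q)$. Its three hypotheses must be verified: (i) $\tilde Z' = \ell_\infty(I; M_N)$ has CMAP, which is exactly Lemma \ref{lemma-ell_infty-has-CMAP}; (ii) $W$ is locally reflexive by assumption; and (iii) $\tilde Z = \ell_1(I; \mathcal{S}_1^N)$ is strongly locally reflexive. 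A second application of the auxiliary claim, this time to $T$ itself, then gives $\kappa_p^o(T) = \nu^p_o(TQ)$, and chaining the equalities delivers $\kappa_p^o(T) = \kappa_p^o(\iota_W T)$.

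The main obstacle is verifying hypothesis (iii), the strong local reflexivity of $\tilde Z = \ell_1(I; \mathcal{S}_1^N)$. The idea is that any finite-dimensional subspaces $F \subseteq \tilde Z''$ and $N \subseteq \tilde Z'$ can be $\varepsilon$-approximated by their images under truncation to a suitably large finite $J \subseteq I$, reducing matters to the finite-dimensional operator space $\ell_1(J; \mathcal{S}_1^N)$, which is trivially strongly locally reflexive. Conditions (b) and (c) of the definition can then be arranged by a coordinate-wise perturbation argument using the complete contractivity of the truncation projections of $\tilde Z$ and the pairing with $\ell_\infty(I; M_N)$; this follows the standard pattern for $\ell_1$ in the Banach category but requires care when picking representatives of elements of $F \subseteq \tilde Z''$ which match the given functionals on $N$ while fixing $F \cap \tilde Z$.
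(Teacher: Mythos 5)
Your overall route is the same as the paper's: use $N$-maximality to produce a complete $1$-quotient $Q:\ell_1(I;\mathcal{S}_1^N)\to V$ from a projective space, transfer $\kappa_p^o$ to the completely right $p$-nuclear norm of the composition with $Q$ (your auxiliary claim is exactly what the paper cites from \cite[Prop. 3.8]{ChaDiGa-Operator-p-compact}), and then invoke the regularity isometry for $\mathcal{N}^p_o$ (Proposition \ref{prop-regularity-right-p-nuclear-weakeer-assumption}). The genuine problem is your verification of its hypothesis (iii), the strong local reflexivity of $\tilde Z=\ell_1(I;\mathcal{S}_1^N)$. Your plan is to approximate finite-dimensional subspaces of $\tilde Z''$ and $\tilde Z'$ by their truncations to a large finite $J\subseteq I$. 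This fails already at the first step: $\tilde Z'=\ell_\infty(I;M_N)$ and $\tilde Z''=\ell_\infty(I;M_N)'$, and elements of these spaces are in general not norm-approximated by truncations. For instance, the constant sequence $(1,1,\dots)\in\ell_\infty(I)\subseteq\tilde Z'$ is at distance $1$ from every finitely supported element, and a singular (Banach-limit type) functional on $\ell_\infty(I)$, viewed in $\tilde Z''$, is killed by every truncation $P_J''$ while having norm $1$. So the reduction to $\ell_1(J;\mathcal{S}_1^N)$ does not get off the ground, and it is precisely the singular part of a finite-dimensional $F\subseteq\tilde Z''$ that strong local reflexivity has to handle. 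The paper sidesteps this entirely: $\tilde Z$ is the predual of the von Neumann algebra $\ell_\infty(I;M_N)$, and preduals of von Neumann algebras are strongly locally reflexive by \cite[Thm. 15.3.5]{Effros-Ruan-book}. That citation is the intended way to obtain (iii); a from-scratch truncation argument is not a viable substitute.

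A secondary issue is one of framing: you start from ``assume $\iota_W T\in\mathcal{K}_p^o(V,W'')$'' and then ask Proposition \ref{prop-regularity-right-p-nuclear-weakeer-assumption} to \emph{produce} $TQ\in\mathcal{N}^p_o(\tilde Z,W)$ from $\iota_W TQ\in\mathcal{N}^p_o(\tilde Z,W'')$. That proposition is only an isometry \emph{into}; recovering membership in $\mathcal{N}^p_o(\tilde Z,W)$ from the composition with $\iota_W$ is the much harder Theorem \ref{theorem-Pietsch}, and that stronger statement is the content of the later Corollary \ref{cor-compactness-in-the-bidual-for-operators}, not of the present proposition. For the statement actually being proved the hypothesis is $T\in\mathcal{K}_p^o(V,W)$, so $TQ\in\mathcal{N}^p_o(\tilde Z,W)$ is automatic from your auxiliary claim and only the norm equality $\nu^p_o(TQ)=\nu^p_o(\iota_WTQ)$ is needed. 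With that reordering, and with (iii) obtained by citation rather than by truncation, your argument coincides with the paper's proof.
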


\begin{proof}
 Since $V$ is $N$-maximal, there is a complete contraction $Q : \ell_1(I;\mathcal{S}_1^N) \to V$. Given that $\ell_1(I;\mathcal{S}_1^N)$ is a projective operator space, for  $T \in \mathcal{K}_p^o(V,W)$  we have the equality $\kappa_p^o(T)=\nu^p_p(TQ)$ \cite[Prop. 3.8.]{ChaDiGa-Operator-p-compact}. Since $\ell_1(I;\mathcal{S}_1^N)$ is strongly locally reflexive because its dual is the von Neumann algebra $\ell_\infty(I;M_N)$ \cite[Thm. 15.3.5]{Effros-Ruan-book}, and moreover $\ell_\infty(I;M_N)$ has the CMAP by Lemma \ref{lemma-ell_infty-has-CMAP}, we can use Proposition \ref{prop-regularity-right-p-nuclear-weakeer-assumption} to get
 $$\kappa_p^o(T)=\nu^p_p(TQ)=\nu^p_p(\iota_W TQ)=\kappa_p^o(\iota_WT),$$
 which gives the desired equality.
\end{proof}

Recall that given operator spaces $V$ and $W$, a linear map $T : V \to W$ is said to be \emph{completely integral} if
\[
\iota^o(T) = \sup\big\{ \nu^o(\restr{T}{V_0}) \;:\; V_0 \subseteq V \text{ finite-dimensional} \big\}
\]
is finite. The set of all such maps is denoted by $\mathcal{I}^o(V,W)$, and this is a mapping ideal \cite[Sec. 12.3]{Effros-Ruan-book}.

Under certain condition the completely nuclear norm and completely integral norms coincide. 

\begin{lemma}\label{lemma-nuclear-coincides-integral}
Suppose that $W$ has CMAP and $V$ is  locally reflexive.
Then for any $T \in \mathcal{F}(V,W)$ we have $\nu^o(T) = \iota^o(T)$.
\end{lemma}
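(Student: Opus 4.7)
The inequality $\iota^o(T) \le \nu^o(T)$ is immediate: for any finite-dimensional $V_0 \subseteq V$, the ideal property of $\nu^o$ yields $\nu^o(\restr{T}{V_0}) \le \nu^o(T)$, so taking the supremum defining $\iota^o(T)$ gives the claim. The plan for the reverse inequality is to exploit the CMAP of $W$ to reduce to the case of a map with finite-dimensional codomain, and then close up via a trace-duality argument enabled by the local reflexivity of $V$. Fix $\varepsilon > 0$. By Lemma \ref{lemma-Pietsch-10-2-5}, the CMAP of $W$ yields $A \in \mathcal{F}(W,W)$ with $\n{A}_{\cb} \le 1+\varepsilon$ and $AT = T$. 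Denoting by $G = A(W)$ the (finite-dimensional) range of $A$ and by $j_G : G \hookrightarrow W$ the canonical complete isometry, we factor $A = j_G A_1$ with $A_1 : W \to G$ of the same $\cb$-norm. Setting $S := A_1 T : V \to G$, one has $T = j_G S$, so the ideal property of both norms gives $\nu^o(T) \le \nu^o(S)$ and $\iota^o(S) \le (1+\varepsilon)\iota^o(T)$. The proof will thus be complete once we establish $\nu^o(S) \le \iota^o(S)$ whenever $S : V \to G$ has finite-dimensional codomain.

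For such $S$, consider the adjoint $S' : G' \to V'$, whose domain $G'$ is finite-dimensional. Since the supremum defining $\iota^o(S')$ is then attained at $G'$ itself, $\iota^o(S') = \nu^o(S')$. By the standard trace duality for the completely integral norm in the operator-space setting, $\iota^o(S) = \iota^o(S')$; this is where the local reflexivity of $V$ is used. Finally, since $G$ is finite-dimensional the canonical embedding $\iota_G : G \to G''$ is a completely isometric isomorphism satisfying $\iota_G \circ S = S'' \circ \iota_V$, hence $S = \iota_G^{-1} S'' \iota_V$. The ideal property and the adjoint-contractivity of $\nu^o$ then yield
\[
\nu^o(S) \;\le\; \n{\iota_G^{-1}}_{\cb}\, \nu^o(S'')\, \n{\iota_V}_{\cb} \;\le\; \nu^o(S'') \;\le\; \nu^o(S') \;=\; \iota^o(S') \;=\; \iota^o(S).
\]
Combining with the reduction of the previous paragraph gives $\nu^o(T) \le (1+\varepsilon)\iota^o(T)$, and letting $\varepsilon \to 0$ finishes the proof.

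The main obstacle is the trace-duality identity $\iota^o(S) = \iota^o(S')$. The direction $\iota^o(S) \le \iota^o(S')$ follows from the completely isometric inclusion $V \otimes_{\min} G' \hookrightarrow V'' \otimes_{\min} G'$, recalling that the completely integral norm is realized as the norm of a bilinear form on the minimal tensor product with the dual. The reverse direction requires extending the bilinear form associated to $S$ from $V \otimes_{\min} G'$ to $V'' \otimes_{\min} G'$ so that the extension coincides with the form induced by $S''$ with no loss of norm, and this extension-with-norm-preservation is exactly what local reflexivity of $V$ provides in the noncommutative setting.
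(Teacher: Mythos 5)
Your lower bound $\iota^o(T)\le\nu^o(T)$ and your CMAP reduction are exactly the paper's argument: it too invokes Lemma \ref{lemma-Pietsch-10-2-5} to write $T=JA_0T$ with $A_0$ landing in the finite-dimensional space $A(W)$, and then applies the ideal property of both norms on either end. The divergence is in how the finite-dimensional-codomain case is settled. The paper simply cites \cite[Thm. 14.3.1]{Effros-Ruan-book}, which says that local reflexivity of $V$ is equivalent to $\mathcal{N}^o(V,F)=\mathcal{I}^o(V,F)$ isometrically for every finite-dimensional $F$; applied to $A_0T$ this finishes the proof in one line. You instead run the chain $\nu^o(S)\le\nu^o(S'')\le\nu^o(S')=\iota^o(S')$ and then claim $\iota^o(S')\le\iota^o(S)$. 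The displayed inequalities are all fine (the adjoint of a completely nuclear map is completely nuclear with no increase of norm, and the supremum defining $\iota^o(S')$ is attained at the finite-dimensional domain $G'$), and you have correctly sorted out which direction of $\iota^o(S)=\iota^o(S')$ is free (injectivity of $\otimes_{\min}$ gives $\iota^o(S)\le\iota^o(S')$) and which one needs local reflexivity.

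The soft spot is that the needed inequality is not ``standard trace duality'': it is exactly the content of the theorem the paper cites. Granting $\iota^o(S')\le\iota^o(S)$, your chain yields $\nu^o(S)\le\iota^o(S)$, i.e. $\mathcal{N}^o(V,G)=\mathcal{I}^o(V,G)$; conversely that equality immediately gives $\iota^o(S')=\nu^o(S')\le\nu^o(S)=\iota^o(S)$. The two statements are equivalent, so the detour through $S''$ and $S'$ does not reduce the difficulty, and the closing sentence ``this extension-with-norm-preservation is exactly what local reflexivity provides'' asserts the hard step rather than proving it. To actually prove it you would use the tensor formulation of local reflexivity --- that $G'\otimes_{\min}V''\to(G'\otimes_{\min}V)''$ is isometric for finite-dimensional $G'$ --- together with Goldstine's theorem and the weak$^*$ continuity on $\CB(G,V'')$ of the functional induced by $S'\in G\widehat\otimes V'$. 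Either carry that out, or do as the paper does and cite \cite[Thm. 14.3.1]{Effros-Ruan-book} directly.
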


\begin{proof}
Given $\varepsilon>0$, by Lemma \ref{lemma-Pietsch-10-2-5} there exists $A\in\mathcal F(W,W)$ such that $\n{A}_{\cb} \le 1+\varepsilon$ and $AT=T$.
Let $A_0:W \to A(W)$ be the map given by $A_0w=Aw$, and let $J:A(W) \to W$ the formal inclusion so that $T=JA_0T$.
Now,
$$\nu^o(T)=\nu^o(JA_0T)\leq\nu^o(A_0T) = \iota^o(A_0T), $$
where the last equality follows from the fact that $V$ is locally reflexive \cite[Thm. 14.3.1]{Effros-Ruan-book}. The ideal property now shows that $\iota^o(A_0T) \leq \n{A_0}_{\cb} \iota^o(T) \le (1+\varepsilon)\iota^o(T)$,
which leads to $\nu^o(T) \leq \iota^o(T)$. Since the other inequality always holds, we have the result.
\end{proof}

We now state the main theorem of this section which corresponds to a more involved version of \cite[Lem. 3]{Pietsch-p-compact}.

\begin{theorem}\label{theorem-Pietsch}
Suppose that $V'$ has CMAP, $V$ is strongly locally reflexive, and both $V'$ and $W$ are locally reflexive.
If for $T\in\CB(V,W)$ we have that $\iota_WT \in \mathcal{N}^p_o(V,W'')$, then $T\in \mathcal{N}^p_o(V,W)$ and moreover $\nu^p_o(T) = \nu^p_o(\iota_WT)$.
\end{theorem}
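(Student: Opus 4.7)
The inequality $\nu^p_o(\iota_W T) \le \nu^p_o(T)$ is immediate from the ideal property (since $\iota_W$ is a complete isometry), so the substantive claim is the reverse inequality together with the membership $T \in \mathcal{N}^p_o(V,W)$. In view of Proposition \ref{prop-regularity-right-p-nuclear-weakeer-assumption}, once I establish that $T$ is completely right $p$-nuclear the equality $\nu^p_o(T) = \nu^p_o(\iota_W T)$ follows automatically, so the plan reduces to producing a Cauchy sequence of finite-rank maps in $\mathcal{N}^p_o(V,W)$ whose limit is $T$.

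Given $\varepsilon>0$, I would start with a factorization $\iota_W T = S\circ M(a,b)\circ R$ with $\|R\|_{\cb}\|S\|_{\cb}\|a\|_{\mathcal{S}_{2p}}\|b\|_{\mathcal{S}_{2p}}\le (1+\varepsilon)\nu^p_o(\iota_W T)$, and truncate $a,b$ to their initial $n\times n$ blocks to obtain finite-rank maps $A_n = S\circ M(a_n,b_n)\circ R\in \mathcal{F}(V,W'')$ converging to $\iota_W T$ in the $\nu^p_o$-norm, each with finite-dimensional range $H_n \subseteq W''$. Local reflexivity of $W$ applied to $H_n$ produces complete contractions $\pi_{n,\beta}:H_n\to W$ such that $\iota_W\pi_{n,\beta}$ converges to the inclusion $H_n\hookrightarrow W''$ in the point-weak$^*$ topology. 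Setting $T_{n,\beta} := \pi_{n,\beta}\circ A_n^0 \in \mathcal{F}(V,W)$, where $A_n^0:V\to H_n$ is $A_n$ with restricted codomain, the ideal property gives $\nu^p_o(T_{n,\beta})\le \nu^p_o(A_n)\le(1+\varepsilon)\nu^p_o(\iota_W T)$, while Lemma \ref{lemma-Pietsch-10-3-1-modified}, whose hypotheses match those of the theorem, gives $\nu^p_o(T_{n,\beta})=\nu^p_o(\iota_W T_{n,\beta})$.

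The main obstacle is to extract a subnet that is actually Cauchy in $\mathcal{N}^p_o(V,W)$, not merely pointwise convergent. Writing differences via Lemma \ref{lemma-Pietsch-10-3-1-modified} applied to the finite-rank $T_{n,\beta}-T_{m,\gamma}$, one gets $\nu^p_o(T_{n,\beta}-T_{m,\gamma}) = \nu^p_o(\iota_W T_{n,\beta}-\iota_W T_{m,\gamma})$, so smallness is equivalent to a tight approximation of $A_n$ by $\iota_W T_{n,\beta}$ in $\nu^p_o(V,W'')$ together with $A_n\to\iota_W T$ in $\nu^p_o$. Because each map $\iota_W T_{n,\beta}$ has finite-dimensional range, the defect $\iota_W\pi_{n,\beta}|_{H_n}-\mathrm{id}_{H_n}$ must be controlled in $\cb$-norm on a finite-dimensional space; this is exactly where the extra hypothesis that $V'$ is locally reflexive enters. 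Combined with the CMAP of $V'$ and the strong local reflexivity of $V$, it allows a diagonal selection of $\beta_n$ so that, on the fixed finite-dimensional scaffolds supplied by Lemma \ref{lemma-Pietsch-10-2-6}, the $\cb$-norm convergences are uniform; concretely one precomposes $T_{n,\beta}$ with a finite-rank $C_\alpha\in\mathcal{F}(V,V)$, $\|C_\alpha\|_{\cb}\le 1+\varepsilon$, produced by Lemma \ref{lemma-Pietsch-10-2-6}, and uses that on the finite-rank range of $C_\alpha$ local reflexivity provides genuine $\cb$-norm approximations rather than mere point-weak$^*$ ones.

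Once the Cauchy property is in place, completeness of $\mathcal{N}^p_o(V,W)$ yields a limit $\widetilde T\in\mathcal{N}^p_o(V,W)$ with $\nu^p_o(\widetilde T)\le (1+\varepsilon)\nu^p_o(\iota_W T)$. Since convergence in $\mathcal{N}^p_o$ forces pointwise convergence and the diagonal subnet satisfies $T_{n,\beta_n}\to T$ pointwise (by the joint convergence of $A_n$ to $\iota_W T$ together with $\iota_W\pi_{n,\beta}\to\mathrm{id}_{H_n}$ point-weak$^*$), one must have $\widetilde T = T$. Thus $T\in \mathcal{N}^p_o(V,W)$ with $\nu^p_o(T)\le (1+\varepsilon)\nu^p_o(\iota_W T)$, and letting $\varepsilon\to 0$ delivers the stated equality $\nu^p_o(T)=\nu^p_o(\iota_W T)$.
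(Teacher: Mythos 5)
Your proposal reduces the theorem to constructing finite-rank maps $T_{n,\beta}=\pi_{n,\beta}A_n^0$ into $W$ that form a $\nu^p_o$-Cauchy sequence converging to $T$, but the step that would make them Cauchy does not close, and this is precisely the point where the theorem is hard. To get $\nu^p_o(T_{n,\beta}-T_{m,\gamma})$ small you need, via the ideal property, $\|j_n-\iota_W\pi_{n,\beta}\|_{\cb}$ small, where $j_n:H_n\hookrightarrow W''$ is the inclusion. Local reflexivity of $W$ only yields nets with $\iota_W\pi_{n,\beta}\to j_n$ in the \emph{point-weak$^*$} topology; it never yields cb-norm (or even operator-norm) approximation of $j_n$ by maps factoring through $W$, and neither does strong local reflexivity (the maps $T:F\to E\subseteq V$ it provides satisfy $\|T\|_{\cb}\|T^{-1}\|_{\cb}<1+\varepsilon$ and preserve finitely many pairings, but are not norm-close to the inclusion). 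Your sentence claiming that ``on the finite-rank range of $C_\alpha$ local reflexivity provides genuine cb-norm approximations'' is false, and precomposing with $C_\alpha$ from Lemma \ref{lemma-Pietsch-10-2-6} cannot help because the obstruction lives in the codomain $W''$ versus $W$, not in the domain. The fallback of merely bounding $\nu^p_o(T_{n,\beta})$ and passing to a pointwise limit also fails: the unit ball of a nuclear-type ideal such as $\mathcal{N}^p_o(V,W)$ is point-weak closed only for finite-dimensional domains (Lemma \ref{lemma-ball-is-point-weak-closed}); for infinite-dimensional $V$ the point-weak closure of that ball is a strictly larger (integral-type) object, so a bounded pointwise limit need not lie in $\mathcal{N}^p_o(V,W)$.

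In effect your argument re-proves Proposition \ref{prop-regularity-right-p-nuclear-weakeer-assumption} (that $T\mapsto\iota_WT$ is a $\nu^p_o$-isometry) but not the substantive converse implication. The paper's proof is of a genuinely different nature: assuming $\iota_WT_0\in\mathcal{N}^p_o(V,W'')$ with $T_0\notin\mathcal{N}^p_o(V,W)$, it separates by a functional $\varphi_0$ on $\mathcal{N}^p_o(V,W'')$ vanishing on $\iota_W(\mathcal{N}^p_o(V,W))$, builds the associated operator $T_{\varphi_0}:W''\to V''$ with $T_{\varphi_0}\iota_W=0$, and uses truncations of the nuclear factorization together with trace duality between $\nu^p_o$ and the completely integral norm (this is where CMAP of $V'$, strong local reflexivity of $V$, and local reflexivity of $V'$ enter, via Lemma \ref{lemma-nuclear-coincides-integral}) to evaluate a trace that must equal both $1$ and $0$. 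You would need to adopt some such duality mechanism; the direct lifting-and-limiting strategy cannot succeed as written.
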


\begin{proof}
We already know from Proposition \ref{prop-regularity-right-p-nuclear-weakeer-assumption} that the mapping $T \mapsto \iota_W T$ is an isometry from 
$\mathcal{N}^p_o(V,W)$ into $\mathcal{N}^p_o(V,W'')$. 
Suppose that there exists $T_0 \in \CB(V,W)$ such that $\iota_WT_0 \in \mathcal{N}^p_o(V,W'')$ but $T_0\not\in \mathcal{N}^p_o(V,W)$.
Without loss of generality, we may assume $\nu^p_o(\iota_WT_0) = 1$.
By the Hahn-Banach theorem, there exists a continuous functional $\varphi_0 : \mathcal{N}^p_o(V,W'') \to \C$ such that $\varphi_0( \iota_WT_0  ) = 1$, but $\varphi_0( \iota_WT  )=0$ for any $T \in \mathcal{N}^p_o(V,W)$.
We now define an associated mapping $T_{\varphi_0} : W'' \to V''$ via
\[
\pair{T_{\varphi_0} w''}{v'} =  \varphi_0( v' \otimes w'' ), \qquad v' \in V', w'' \in W''.
\]

Let us now show that this mapping is completely bounded. Indeed, for $n\in \mathbb{N}$ consider
$(T_{\varphi_0})_n: M_n(W'') \to M_n(V'')$ its $n$-amplification.
Thus, for any $(w_{ij}'') \in M_n(W'') = M_n \otimes W''$, using the matrix pairing $\langle\langle\ , \rangle\rangle$  defined in  \cite[(1.1.27)]{Effros-Ruan-book} we have
\begin{align*}
\left\| (T_{\varphi_0})_n (w_{ij}'')\right\| & = \sup_{\|(v_{kl}')\|_{M_n(V')}\le 1}  \left\|  \langle\langle (T_{\varphi_0})_n (w_{ij}''), (v_{kl}') \rangle\rangle\right\|\\
& = \sup_{\|(v_{kl}')\|_{M_n(V')}\le 1}  \left\| \left(\varphi_o(w_{ij}'' \otimes v_{kl}')\right)_{i,j,k,l}\right\|_{M_{n^2}} \\
& \le \|\varphi_0\| \cdot \left\| \left(w_{ij}'' \otimes v_{kl}'\right)_{i,j,k,l}\right\|_{M_{n^2}(\mathcal{N}^p_o(V,W''))} \\
& = \|\varphi_0\| \cdot \| (w_{ij}'')\|_{M_n(W'')} \|(v_{kl}')\|_{M_n(V')},
\end{align*}
where the last equality follows from the definition of mapping ideal for $\mathcal{N}^p_o$ (see \cite[Def. 7.1. (b) and Rmk. 7.2 (vii)]{ChaDiGa-tensor-norms}).

Now, note that for any $v' \in V', w \in W$ we have 
$
\pair{T_{\varphi_0} \iota_Ww}{v'} = \varphi_0( v' \otimes \iota_Ww ) = 0
$
since $v' \otimes w \in \mathcal{N}^p_o(V,W)$.
Therefore $T_{\varphi_0}\iota_W = 0$, and thus $\iota_W'T_{\varphi_0}' = 0$.
Coming back to our original mapping, due to $\iota_WT_0 \in \mathcal{N}^p_o(V,W'')$ we can consider a completely right $p$-nuclear factorization
\[
\xymatrix{
V \ar[r]^{T_0} \ar[d]_{\alpha} &W \ar[r]^{\iota_W} &W''\\
\mathcal{S}_{p'} \ar[r]_{M(a,b)} &\mathcal{S}_1 \ar[ur]_{\beta}, 
}
\]
where $a,b\in \mathcal{S}_{2p}$. We may assume without loss of generality that $\n{\alpha}_{\cb}, \n{\beta}_{\cb} \le 1$.
Define for each $n\in\N$ the mapping $\gamma_n = \beta M(a_n,b_n)\alpha : V \to W''$ where $a_n \in \mathcal{S}_{2p}$ is the restriction of $a$ to the initial $n\times n$ block, and similarly for $b_n$.
Note that $(a_n)$ converges to $a$ in $\mathcal{S}_{2p}$, and  $(b_n)$ converges to $b$ in $\mathcal{S}_{2p}$.
Using the same argument as in \cite[Prop. 2.7]{ChaDiGa-Operator-p-compact}, we get that 
\[
\n{\gamma_n - \gamma_m}_{\mathcal{N}^p_o(V,W'')} \le \n{a_n -a_m} \n{b} + \n{a}\n{b_n-b_m}.
\]
Therefore $(\gamma_n)$ is Cauchy in $\mathcal{N}^p_o(V,W'')$ and thus has a limit there. But convergence in $\mathcal{N}^p_o(V,W'')$ implies pointwise convergence, so said limit must be $\iota_W T_0$. Thus, we have that $\lim_{n\to\infty}\nu^p_o( \iota_WT_0 - \gamma_n ) = 0$.

Note that $\gamma_n$ is a mapping of finite rank.
For each $i,j\in\N$, let $\alpha'_{ij} \in V'$ be the functional that assigns to each $v\in V$ the $ij$-th entry of $\alpha(v)$. If we denote by $E_{rs}$ the matrix units in $M_n$, then it is straightforward to verify that
\[
\gamma_n = \sum_{i,j,r,s=1}^n a_{ri}b_{js} \alpha'_{ij} \otimes \beta E_{rs},
\]
where of course the numbers $a_{ij}$ and $b_{js}$ are the entries in the matrix representations of $a$ and $b$, respectively.
A calculation then shows
\begin{equation}\label{eqn-Pietsch-trace-gamma_n}
\tr\big( \gamma_n' T_{\varphi_0}' \iota_{V'}\big) = \sum_{i,j,r,s=1}^n a_{ri}b_{js} \varphi_0\big(\alpha'_{ij} \otimes \beta E_{rs}\big) = \varphi_0\bigg( \sum_{i,j,r,s=1}^n a_{ri}b_{js} \alpha'_{ij} \otimes \beta E_{rs}\bigg) = \varphi_0(\gamma_n).
\end{equation}
where the above trace makes sense because the mapping involved belongs to $\mathcal{F}(V',V')$.

We next will show that the sequence $(\gamma_n' T_{\varphi_0}' \iota_{V'})$ is Cauchy in $\mathcal{N}^o(V',V')$. 
For a mapping $A \in \CB(V',V')$ with $\n{A}_{\cb} \le 1$ and $m < n$, 
\begin{multline}\label{eqn-Pietsch-nuclear}
\big|\tr\big( A\gamma_n' T_{\varphi_0}' \iota_{V'} - A \gamma_m' T_{\varphi_0}' \iota_{V'}\big)\big| = \\
\left| \varphi_0\left( \sum_{i,j,r,s=1}^n a_{ri}b_{js} A\alpha'_{ij} \otimes \beta E_{rs} - \sum_{i,j,r,s=1}^m a_{ri}b_{js} A\alpha'_{ij} \otimes \beta E_{rs} \right) \right|  
\\
\le \n{\varphi_0} \nu^p_o\left( \sum_{i,j,r,s=1}^n a_{ri}b_{js} A\alpha'_{ij} \otimes \beta E_{rs} - \sum_{i,j,r,s=1}^m a_{ri}b_{js} A\alpha'_{ij} \otimes \beta E_{rs} \right)
\end{multline}
Recall that the truncation of $\alpha$ (which, for simplicity, we still call the same) is $\alpha : V \to \mathcal{S}_{p'}^n$ given by $v \mapsto (\alpha'_{ij}(v))_{i,j=1}^n$.
From the identification $\CB(V, \mathcal{S}_{p'}^n) =  V' \otimes_{\min}\mathcal{S}_{p'}$, since $A$ is a complete contraction we have that the map $\alpha^A : V \to \mathcal{S}_{p'}^n$ given by  $v \mapsto \big((A\alpha'_{ij})(v)\big)_{i,j=1}^n$ is completely bounded with $\n{\alpha^A}_{\cb} \le \n{\alpha}_{\cb}$.
Thus, if we write
\[
\gamma_n^A = \sum_{i,j,r,s=1}^n a_{ri}b_{js} A\alpha'_{ij} \otimes \beta E_{rs},
\]
the same argument we used above for $(\gamma_n)$ will show that
\begin{equation}\label{eqn-Pietsch-gammaA}
\n{\gamma_n^A - \gamma_m^A}_{\mathcal{N}^p_o(V,W'')} \le  \n{a_n -a_m} \n{b} + \n{a}\n{b_n-b_m}.    
\end{equation}
From equations \eqref{eqn-Pietsch-nuclear} and \eqref{eqn-Pietsch-gammaA}, together with the trace duality between the completely bounded and completely integral norms, we get that  $(\gamma_n' T_{\varphi_0}' \iota_{V'})$ is Cauchy in $\mathcal{I}^o(V',V')$, so by  Lemma \ref{lemma-nuclear-coincides-integral} it is also Cauchy in ${\mathcal{N}^o(V',V')}$
and thus it converges to a limit in $\mathcal{N}^o(V',V')$. Since convergence in $\mathcal{N}^o(V',V')$ implies pointwise convergence, this limit must be $T_0'\iota_W'T_{\varphi_0}' \iota_{V'}$. In particular, since $V'$ has CMAP, it follows that the trace of $T_0'\iota_W'T_{\varphi_0}' \iota_{V'}$ is well defined and is the limit of $(\tr(\gamma_n' T_{\varphi_0}' \iota_{V'}))$.

Now, using the fact that $\gamma_n \to \iota_WT_0$ in $\mathcal{N}^p_o(V,W'')$ and \eqref{eqn-Pietsch-trace-gamma_n},
\[
1 =  \varphi_0( \iota_WT_0  ) = \lim_{n\to\infty} \varphi_0( \gamma_n ) = \lim_{n\to\infty} \tr\big( \gamma_n' T_{\varphi_0}' \iota_{V'}\big) = \tr(T_0'\iota_W'T_{\varphi_0}' \iota_{V'}) = 0,
\]
where the last equality follows from $\iota_W'T_{\varphi_0}' = 0$, so we have obtained a contradiction.
\end{proof}

From the previous theorem we obtain an operator space version of \cite[Cor. 2.6]{Galicer-Lassalle-Turco} (see also \cite[Cor. 3.6]{Delgado-Pineiro-Serrano-adjoints}), but unlike in the Banach space case we do require some assumptions on the operator spaces.

\begin{corollary}\label{cor-compactness-in-the-bidual-for-operators}
If $V$ is an $N$-maximal operator space, $W$ is locally reflexive, and $T\in\CB(V,W)$ satisfies that $\iota_WT\in\mathcal{K}_p^o(V,W'')$, then  $T\in\mathcal{K}_p^o(V,W)$ and moreover $\kappa_p^o(T) = \kappa_p^o(\iota_WT)$.
\end{corollary}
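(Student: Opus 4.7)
The plan is to mimic the argument of the proposition immediately preceding the corollary (which handled the inequality $\kappa_p^o(\iota_W T)\le \kappa_p^o(T)$), but now using Theorem \ref{theorem-Pietsch} to extract the converse inequality. Since the ideal property always gives $\kappa_p^o(\iota_W T)\le \kappa_p^o(T)$, the entire content of the corollary is to show that $\iota_W T \in \mathcal{K}_p^o(V,W'')$ forces $T\in\mathcal{K}_p^o(V,W)$ together with $\kappa_p^o(T)\le \kappa_p^o(\iota_W T)$.

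First, I would use the $N$-maximality of $V$ together with Lemma \ref{lemma-N-maximal-is-quotient-of-ell_1({S}_1^N)} to fix a complete 1-quotient $Q : Z_0 \twoheadrightarrow V$, where $Z_0 = \ell_1(I; \mathcal{S}_1^N)$ for some index set $I$. Since $Z_0$ is projective, \cite[Prop. 3.8]{ChaDiGa-Operator-p-compact} applied to the operator $p$-compact map $\iota_W T : V\to W''$ gives
\[
\kappa_p^o(\iota_W T) = \nu^p_o\bigl( (\iota_W T) Q \bigr) = \nu^p_o\bigl( \iota_W (TQ) \bigr),
\]
so in particular $\iota_W(TQ)$ lies in $\mathcal{N}^p_o(Z_0, W'')$.

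The key step is now to verify that the quadruple $(Z_0, W)$ satisfies the hypotheses of Theorem \ref{theorem-Pietsch}. The dual $Z_0'$ equals the von Neumann algebra $\ell_\infty(I; M_N)$, which has CMAP by Lemma \ref{lemma-ell_infty-has-CMAP} and is locally reflexive (in fact injective, so trivially locally reflexive). Moreover, $Z_0$ is strongly locally reflexive by \cite[Thm. 15.3.5]{Effros-Ruan-book} since its dual is a von Neumann algebra, and $W$ is locally reflexive by assumption. Thus Theorem \ref{theorem-Pietsch} applies to $TQ : Z_0 \to W$ and yields
\[
TQ \in \mathcal{N}^p_o(Z_0,W) \qquad \text{with} \qquad \nu^p_o(TQ) = \nu^p_o\bigl(\iota_W(TQ)\bigr).
\]

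Finally I would pull this back along $Q$. Invoking \cite[Prop. 3.8]{ChaDiGa-Operator-p-compact} once more (in the other direction), $TQ \in \mathcal{N}^p_o(Z_0,W)$ together with projectivity of $Z_0$ and surjectivity of $Q$ gives $T\in \mathcal{K}_p^o(V,W)$ with $\kappa_p^o(T) = \nu^p_o(TQ)$. Chaining the three equalities produces
\[
\kappa_p^o(T) \;=\; \nu^p_o(TQ) \;=\; \nu^p_o\bigl(\iota_W(TQ)\bigr) \;=\; \kappa_p^o(\iota_W T),
\]
which is precisely the desired conclusion.

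The main obstacle is not really a technical calculation: everything reduces to recognizing that $Z_0 = \ell_1(I;\mathcal{S}_1^N)$ is the ``correct'' auxiliary projective space, so that its dual is simultaneously a von Neumann algebra (giving strong local reflexivity of $Z_0$ via \cite[Thm.~15.3.5]{Effros-Ruan-book}) and has CMAP (via Lemma \ref{lemma-ell_infty-has-CMAP}). Once this compatibility check is in place, the proof is just a concatenation of Lemma \ref{lemma-N-maximal-is-quotient-of-ell_1({S}_1^N)}, \cite[Prop. 3.8]{ChaDiGa-Operator-p-compact} (used twice), and Theorem \ref{theorem-Pietsch}.
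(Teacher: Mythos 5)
Your argument is essentially the paper's own proof: the same quotient $Q:\ell_1(I;\mathcal{S}_1^N)\twoheadrightarrow V$ obtained from $N$-maximality, the same reduction to $\nu^p_o$ via \cite[Prop. 3.8]{ChaDiGa-Operator-p-compact} on both ends, and the same appeal to Theorem \ref{theorem-Pietsch} in the middle. One justification is incorrect, though: you assert that $\ell_\infty(I;M_N)$ is locally reflexive ``in fact injective, so trivially locally reflexive,'' but injectivity does not imply local reflexivity --- $\mathcal{B}(\ell_2)$ is injective and famously fails to be locally reflexive. The conclusion you need is still true; the paper obtains it from the observation that $\ell_\infty(I;M_N)$ is $N$-minimal and hence locally reflexive by \cite[Prop. 18.4]{Pisier-OS-theory}. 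With that one repair your proof is complete and matches the published argument.
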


\begin{proof}
Since $V$ is $N$-maximal, by Lemma \ref{lemma-N-maximal-is-quotient-of-ell_1({S}_1^N)} we get a complete quotient $Q : \ell_1(I;\mathcal{S}_1^N) \to V$.
Since $\ell_1(I;\mathcal{S}_1^N)$ is a projective operator space and $\iota_WT\in\mathcal{K}_p^o(V,W'')$, by \cite[Prop. 3.7]{ChaDiGa-Operator-p-compact} we have that $\iota_WTQ \in\mathcal{N}_o^p(\ell_1(I;\mathcal{S}_1^N),W'')$ and $\nu_o^p(\iota_WTQ) = \kappa_p^o(\iota_WT)$.
Recall that $\ell_1(I;\mathcal{S}_1^N)$ is strongly locally reflexive  and  $\ell_\infty(I;M_N)$ has the CMAP.
Moreover, since $\ell_\infty(I;M_N)$ is $N$-minimal it follows from \cite[Prop. 18.4]{Pisier-OS-theory} that it is locally reflexive.
Thus, we can apply Theorem \ref{theorem-Pietsch} to get that  $TQ \in\mathcal{N}_o^p(\ell_1(I;\mathcal{S}_1^N),W)$ and $\nu_o^p(TQ) = \nu_o^p(\iota_WTQ) = \kappa_p^o(\iota_WT)$.
From \cite[Prop. 3.8]{ChaDiGa-Operator-p-compact} we can now conclude that $T\in\mathcal{K}_p^o(V,W)$ and $\kappa_p^o(T)= \kappa_p^o(\iota_WT)$. 
\end{proof}
\section{Associating operator $p$-compact maps to operator $p$-compact matrix sets} \label{sec: associating}

Just from the definitions, each operator $p$-compact map $T: V \to W$ is naturally associated to a relatively operator $p$-compact matrix set: $T(\mathbf{B}_V)$.
It turns out that the opposite is also true, and every relatively operator $p$-compact matrix set is associated with an operator $p$-compact map.
The analogous result in the Banach case can be found in \cite[Prop. 3.5]{Delgado-Pineiro-Serrano-adjoints}.

To achieve this we need first to develop some theory about a noncommutative notion of absolute convexity for matrix sets.

Following \cite{Effros-Webster}, we say that a matrix set $\mathbf{K} = (K_n)$ over $V$ is \emph{absolutely matrix convex} if:
\begin{enumerate}[(i)]
	\item For all $x \in K_n$ and $y \in K_m$, $x \oplus y \in K_{m+n}$.
	\item For all $x \in K_n$, $a \in M_{m,n}$, and $b \in M_{n,m}$ with $\n{a}, \n{b} \le 1$,
	$a x b \in K_m$.
\end{enumerate}

The main example of an absolutely matrix convex set of matrices over $V$ comes from the operator space structure: it follows immediately from Ruan's axioms that both the open and closed matrix unit balls of $V$ are  absolutely matrix convex.

Clearly, the intersection of a family of absolutely matrix convex sets over $V$ (understood as intersecting on each matrix level) is again absolutely matrix convex. Therefore we can define the \emph{absolutely matrix convex hull} of a given matrix set $\mathbf{K}$ over $V$, denoted by $\amconv(\mathbf{K})$,
as the smallest absolutely matrix convex   set over $V$ that contains $\mathbf{K}$.

For a subset $K$ of a Banach space $X$ and $1 \le p \le \infty$, it is not difficult to see that $K$ is relatively $p$-compact if and only if its absolutely convex hull is relatively $p$-compact.
We will now present an operator space version of this fact. Webster proved the case $p=\infty$ in \cite[p. 8]{webster1998matrix}.

\begin{lemma}\label{lemma-amconv-preserves-p-compactness}
Let $\mathbf{K}=(K_n)_n$ be a matrix set over $V$, and $1\le p \le \infty$. Then $\mathbf{K}$ is relatively operator $p$-compact if and only if so is $\amconv(\mathbf{K})$. Moreover, in this case $\frak m_p^o(\mathbf{K}) =\frak m_p^o(\amconv(\mathbf{K}))$.
\end{lemma}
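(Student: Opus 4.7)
The plan is to split the equivalence into two inclusions. The direction $\amconv(\mathbf{K})$ relatively operator $p$-compact $\Rightarrow$ $\mathbf{K}$ relatively operator $p$-compact, together with $\frak m_p^o(\mathbf{K}) \le \frak m_p^o(\amconv(\mathbf{K}))$, is immediate from $\mathbf{K} \subseteq \amconv(\mathbf{K})$. So the entire content of the lemma lies in the reverse direction, for which it suffices to show that whenever $v \in \mathcal{S}_p[V]$ satisfies $\mathbf{K} \subseteq \Theta^v(\mathbf{B}_{\mathcal{S}_p'})$, one also has $\amconv(\mathbf{K}) \subseteq \Theta^v(\mathbf{B}_{\mathcal{S}_p'})$.

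The crux is the following structural observation: for every $v \in \mathcal{S}_p[V]$, the matrix set $\Theta^v(\mathbf{B}_{\mathcal{S}_p'})$ is itself absolutely matrix convex. This will be proved by combining two elementary facts. First, the matrix unit ball $\mathbf{B}_{\mathcal{S}_p'}$ of any operator space is absolutely matrix convex; this is a direct reading of Ruan's axioms, since $\n{x\oplus y}_{M_{n+m}(\mathcal{S}_p')} = \max\{\n{x},\n{y}\}$ and $\n{axb}_{M_m(\mathcal{S}_p')} \le \n{a}\n{x}\n{b}$. Second, the amplifications of any linear map $T : E \to F$ preserve absolute matrix convexity: indeed, $T_n(x)\oplus T_m(y) = T_{n+m}(x\oplus y)$ and $aT_n(x)b = T_m(axb)$, so whenever $\mathbf{L}$ is absolutely matrix convex over $E$, the image $T(\mathbf{L}) = (T_n(L_n))_n$ is absolutely matrix convex over $F$. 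Applying both facts to the linear map $\Theta^v : \mathcal{S}_p' \to V$ yields the claim.

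Once this is in place, the minimality of $\amconv(\mathbf{K})$ among absolutely matrix convex sets containing $\mathbf{K}$ forces $\amconv(\mathbf{K}) \subseteq \Theta^v(\mathbf{B}_{\mathcal{S}_p'})$. Hence $\amconv(\mathbf{K})$ is relatively operator $p$-compact with $\frak m_p^o(\amconv(\mathbf{K})) \le \n{v}_{\mathcal{S}_p[V]}$, and taking the infimum over all admissible $v$ gives $\frak m_p^o(\amconv(\mathbf{K})) \le \frak m_p^o(\mathbf{K})$. Combined with the trivial reverse inequality, this produces the claimed equality.

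I do not expect a serious obstacle: the argument is essentially a two-line structural observation together with Ruan's axioms. The only subtlety worth flagging is that the argument makes no reference to closedness of $\amconv(\mathbf{K})$ (the absolute matrix convex hull is defined as the smallest such set, without a closure operation); since $\Theta^v(\mathbf{B}_{\mathcal{S}_p'})$ was already shown earlier in the section to be closed, the closure of $\amconv(\mathbf{K})$ remains inside $\Theta^v(\mathbf{B}_{\mathcal{S}_p'})$ as well, so the same inequality also holds for $\overline{\amconv(\mathbf{K})}$ via Lemma \ref{lemma-p-compact-closure}, should one wish to consider it.
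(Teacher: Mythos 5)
Your proposal is correct and follows essentially the same route as the paper's proof: one inclusion is trivial from $\mathbf{K} \subseteq \amconv(\mathbf{K})$, and the other follows because $\Theta^v(\mathbf{B}_{\mathcal{S}_p'})$ is absolutely matrix convex (the paper states this in one line from the linearity of $\Theta^v$ and the absolute matrix convexity of $\mathbf{B}_{\mathcal{S}_p'}$, which you simply spell out in more detail) and then invoking minimality of the hull. No gaps.
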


\begin{proof}
Since $\mathbf{K} \subseteq \amconv(\mathbf{K})$, it is clear that if $\amconv(\mathbf{K})$ is relatively operator $p$-compact then so is $\mathbf{K}$ and moreover $\frak m_p^o(\mathbf{K}) \le \frak m_p^o(\amconv(\mathbf{K}))$.

Suppose now that $\mathbf{K}$ is operator $p$-compact. Let $v \in \mathcal{S}_p[V]$ such that $\mathbf{K}\subset \Theta^v(\mathbf{B}_{\mathcal{S}_p'})$. 
Since $\mathbf{B}_{\mathcal{S}_p'}$ is absolutely matrix convex and $\Theta^v$ is linear, $\Theta^v(\mathbf{B}_{\mathcal{S}_p'})$ is also absolutely matrix convex.
Therefore $\amconv(\mathbf{K})\subset \Theta^v(\mathbf{B}_{\mathcal{S}_p'})$, so $\amconv(\mathbf{K})$ is relatively operator $p$-compact, and $\kappa_p^o(\amconv(\mathbf{K})) \le \n{v}_{\mathcal{S}_p[V]}$. Taking the infimum over all such $v$ yields $\frak m_p^o(\amconv(\mathbf{K})) \le \frak m_p^o(\mathbf{K})$ as desired.
\end{proof}

The second part of the following result should be understood as the matrix version of the fact that an absolutely convex set is closed under taking linear combinations with coefficients in the unit ball of $\ell_1^n$.
Given a matrix $x \in M_n(V)$ we can also consider the canonical associated mapping $\Theta^x : \mathcal{S}_1^n \to V$. This notation is used in the following lemma.

\begin{lemma}\label{lemma:abs-matrix-convex-properties}
Let $V$ be an operator space and $\mathbf{K} = (K_n)$ an absolutely matrix convex  set over $V$.
\begin{enumerate}[(a)]
\item If $\rho : M_n \to M_m$ is a complete contraction, then $(\rho \otimes Id_V) K_n \subseteq K_m$.
\item If $\xi \in M_m(\mathcal{S}_1^n)$ with $\n{\xi} \le 1$ and $x \in K_n$, then
$(\Theta^x)_m(\xi) \in K_m$.
\end{enumerate}
\end{lemma}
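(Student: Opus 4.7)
The plan is to prove (a) first and then derive (b) as a direct consequence, since the two statements are essentially equivalent under the completely isometric identification $M_m(\mathcal{S}_1^n)\cong \CB(M_n, M_m)$ coming from the duality $\mathcal{S}_1^n=M_n^*$. For (a), the crucial input is the Wittstock--Paulsen structure theorem for complete contractions between matrix algebras: any complete contraction $\rho:M_n\to M_m$ admits a factorization $\rho(a)=V\pi(a)W$, where $\pi:M_n\to M_{nk}$ is the $*$-representation sending $a$ to the block-diagonal matrix $a\oplus\cdots\oplus a$ with $k$ copies of $a$ (this is the general form, since every $*$-representation of $M_n$ is a multiple of the identity representation), and $V\in M_{m,nk}$, $W\in M_{nk,m}$ are scalar matrices with $\|V\|\|W\|\le 1$; by rescaling we may assume $\|V\|,\|W\|\le 1$.

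Amplifying by $\text{Id}_V$, I get $(\rho\otimes\text{Id}_V)(x)=V\cdot(\pi\otimes\text{Id}_V)(x)\cdot W$ for $x\in M_n(V)$, where multiplication by the scalar matrices $V$ and $W$ acts entrywise. Now $(\pi\otimes\text{Id}_V)(x)=x\oplus x\oplus\cdots\oplus x\in M_{nk}(V)$ lies in $K_{nk}$ by iterating axiom (i) of absolute matrix convexity, and applying axiom (ii) with the contractions $V$ and $W$ places $V(x\oplus\cdots\oplus x)W=(\rho\otimes\text{Id}_V)(x)$ in $K_m$, establishing (a). For (b), under the aforementioned identification a norm-$\le 1$ element $\xi\in M_m(\mathcal{S}_1^n)$ corresponds to a complete contraction $\rho_\xi:M_n\to M_m$, and a direct entry-by-entry computation (using only the definition of $\Theta^x$ and the trace pairing) shows $(\Theta^x)_m(\xi)=(\rho_\xi\otimes\text{Id}_V)(x)$, so (a) yields the conclusion.

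The main obstacle will be twofold: first, invoking the Wittstock--Paulsen factorization in exactly the right form (with the freedom to choose the multiplicity $k$ and to absorb norms into $V$ and $W$), and second, carefully matching transpose conventions in the duality pairing $\mathcal{S}_1^n\times M_n\to\mathbb{C}$ so that the identification $(\Theta^x)_m(\xi)=(\rho_\xi\otimes\text{Id}_V)(x)$ really holds on the nose. Both points are standard in operator space theory but deserve some care.
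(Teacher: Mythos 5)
Your proposal is correct and follows essentially the same route as the paper: the paper's part (a) invokes \cite[Cor. 5.3.5 (i)]{Effros-Ruan-book}, which is precisely the Wittstock--Paulsen-type factorization $\rho(y)=\alpha(y\oplus\cdots\oplus y)\beta$ with contractive scalar matrices that you describe, and then applies the two axioms of absolute matrix convexity exactly as you do. Part (b) is likewise handled identically, via the identification $M_m(\mathcal{S}_1^n)\equiv\CB(M_n,M_m)$ and an entrywise (matrix-unit) verification that $(\Theta^x)_m(\xi)=(\rho_\xi\otimes \mathrm{Id}_V)(x)$.
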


\begin{proof}
$(a)$ If $\rho : M_n \to M_m$ is a complete contraction, by \cite[Cor. 5.3.5 (i)]{Effros-Ruan-book} there exist contractive matrices $\alpha \in M_{m,mn^2}$, and $\beta \in M_{mn^2,m}$ such that for any $y \in M_n$,
\begin{equation}\label{eqn-Stinespring}
\rho(y) = \alpha( \underbrace{y \oplus \cdots \oplus y}_{mn \text{ times}} )\beta.    
\end{equation}
 Thus, if $x \in K_n \subseteq M_n(V)$, it follows that $(\rho \otimes Id_V) x = \alpha( {x \oplus \cdots \oplus x} )\beta$, which belongs to $K_m$ by the absolute matrix convexity of $\mathbf{K}$.
 
 $(b)$ By the canonical duality $M_m(\mathcal{S}_1^n) \equiv \CB(M_n, M_m)$ and since $\n{\xi} \le 1$, note that $\xi$ induces a complete contraction ${\rho : M_n \to M_m}$ 
 which satisfies $(\rho\otimes Id_V)(x) = (\Theta^x)_m(\xi)$ for every $x \in M_n(V)$.
Indeed, by linearity it suffices to check this equality when $x$ has only one nonzero entry, say $x = E_{i_0j_0} \otimes v_0$ where $E_{ij} \in M_n$ are the matrix units and $v_0 \in V$.
Writing $\xi = ( (\xi^{kl}_{ij})_{i,j=1}^n )_{k,l=1}^m$, on one hand we have $(\Theta^{E_{i_0j_0} \otimes v_0})_m(\xi) = (\xi^{kl}_{i_0j_0}v_0)_{k,l=1}^m$.
On the other hand, $(\rho\otimes Id_V)(x) = (\rho\otimes Id_V)( E_{i_0j_0} \otimes v_0 ) = \rho(E_{i_0j_0}) v_0 = (\xi^{kl}_{i_0j_0})_{k,l=1}^mv_0$.
 The desired result now is deduced from $(a)$.
\end{proof}

\begin{remark}
As pointed out to us by an anonymous referee, it is worth mentioning that the aforementioned result \cite[Cor. 5.3.5]{Effros-Ruan-book} is a particular case of the well-known Stinespring dilation theorem.
Such a statement might seem puzzling if one is only familiar with said theorem in the case of completely positive maps, whereas in the Lemma above there is no such requirement on the map $\rho$.
However, there are versions of Stinespring's theorem without the complete positivity: for example, see \cite[Thm. 5.3.3]{Effros-Ruan-book}.
See also \cite[Cor. 5.3.5 (ii) and (iii)]{Effros-Ruan-book} and \cite[Sec. 2.2.2]{Watrous} for  results relating various properties of the map $\rho$ with different forms of the Stinespring dilation.
\end{remark}

The following is a version of the Hahn-Banach theorem for matrix convexity \cite[Thm. 2.3]{CD-Oikhberg} (essentially proved in  \cite[Prop. 4.1]{Effros-Webster}).

\begin{theorem}\label{thm-Hahn-Banach-matrix-convexity}
	Let $\mathbf{K} = (K_n)_n$ be a closed absolutely matrix convex   set over $V$ and let $v_0 \in M_n(V) \setminus K_n$ for some $n\in\N$.
	Then there exists $v' \in M_n(V')$ such that for all $m\in\N$ and all $v \in K_m$,
	$$
	\n{ \mpair{v'}{v} }_{M_{mn}} \le 1 \quad \text{but} \quad \n{ \mpair{v'}{v_0} }_{M_{n^2}} > 1,
	$$ 
	where
	$$
	\mpair{(v'_{ij})_{ij}}{(v_{kl})_{kl}} = \big( v'_{ij}(v_{kl}) \big)_{ijkl}.
	$$
\end{theorem}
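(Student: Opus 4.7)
I would recast the desired separation as an operator-space duality problem, in three main steps.

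\textbf{Step 1 (Gauge of $\mathbf K$).} Using the closedness and absolute matrix convexity of $\mathbf K$, define for each $m \in \N$ and $v \in M_m(V)$ the Minkowski gauge
\[
\rho_m(v) \;=\; \inf\bigl\{r > 0 : v \in r K_m\bigr\} \qquad (\text{with } \inf \emptyset = +\infty).
\]
The two defining conditions of absolute matrix convexity translate directly into Ruan's axioms for the family $(\rho_m)_m$, so after restricting to $V_0 := \{v \in V : \rho_1(v) < \infty\}$ and quotienting by the $\rho_1$-null space, $V_0$ acquires a bona fide operator-space structure in which $\mathbf K$ is the closed matrix unit ball. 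A one-entry compression argument (pairing with $e_i^*$ on the left and $e_j$ on the right) using absolute matrix convexity also shows that $K_m \subseteq M_m(V_0)$ for every $m$. Since $K_n$ is closed and $v_0 \notin K_n$, we conclude $\rho_n(v_0) > 1$, i.e., $\n{v_0}_{M_n(V_0)} > 1$.

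\textbf{Step 2 (Operator-space duality and Smith's lemma).} Trace duality gives a completely isometric inclusion $M_n(V_0) \hookrightarrow \CB(V_0', M_n)$; let $T_{v_0}: V_0' \to M_n$ be the cb map associated to $v_0$, which has cb-norm $\n{v_0}_{M_n(V_0)} > 1$. By Smith's lemma, the cb-norm of a map into $M_n$ equals the norm of its $n$-th amplification, so $\n{(T_{v_0})_n : M_n(V_0') \to M_{n^2}} > 1$. Pick $\psi \in M_n(V_0')$ with $\n{\psi}_{M_n(V_0')} \le 1$ and $\n{(T_{v_0})_n(\psi)}_{M_{n^2}} > 1$. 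Under the complete isometry $M_n(V_0') = \CB(V_0, M_n)$, $\psi$ corresponds to a cb contraction $\phi : V_0 \to M_n$; a direct index calculation, together with the canonical shuffle $M_n(M_n) \cong M_{n^2}$, shows that $(T_{v_0})_n(\psi)$ equals $\phi_n(v_0)$ up to the shuffle, so $\n{\phi_n(v_0)}_{M_{n^2}} > 1$.

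\textbf{Step 3 (Scalar Hahn-Banach extension).} Each entry $\phi_{ij} = e_{ij}^* \circ \phi$ lies in $V_0'$ and extends, by the classical Hahn-Banach theorem, to some $v'_{ij} \in V'$. Setting $v' = (v'_{ij}) \in M_n(V')$ and using that $K_m \subseteq M_m(V_0)$ (so the pairing by $v'$ agrees with $\phi_m$ on $K_m$), we obtain $\n{\mpair{v'}{v}}_{M_{mn}} = \n{\phi_m(v)}_{M_{mn}} \le \n{\phi}_{\cb} \le 1$ for all $v \in K_m$, and $\n{\mpair{v'}{v_0}}_{M_{n^2}} = \n{\phi_n(v_0)}_{M_{n^2}} > 1$. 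Note that we do not need a matrix Hahn-Banach extension here: the matrix condition on $v'$ concerns only $\mathbf K$, which is already inside $V_0$, so scalar extension of each entry suffices.

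\textbf{Main obstacle.} The genuine technical content is Step 1: verifying that the Minkowski gauge of an arbitrary closed absolutely matrix convex set satisfies Ruan's axioms, with proper care for elements of infinite gauge and for the closure in passing from open to closed unit ball. This is essentially the content of Effros--Webster's Proposition 4.1. Once that operator-space structure is in place, Smith's lemma combined with scalar Hahn-Banach finishes the argument with minimal extra work.
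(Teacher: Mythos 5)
The paper does not prove this statement; it cites \cite[Thm. 2.3]{CD-Oikhberg} and \cite[Prop. 4.1]{Effros-Webster}, so your proposal has to stand on its own. Unfortunately it has a genuine gap, concentrated in Step 3 (with a secondary one in Step 1). The functionals $\phi_{ij}$ you produce in Step 2 live in the dual of $V_0$ \emph{equipped with the gauge norm} $\rho_1$, and there is no reason for such a functional to be continuous for the original topology of $V$; hence the ``classical Hahn--Banach extension'' to an element of $V'$ is not available. Concretely, take $V=\ell_2$ (with, say, its minimal operator space structure) and let $K_1=B_{\ell_1}$, which is closed and absolutely convex in $\ell_2$, with the higher levels generated so as to be absolutely matrix convex. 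Then $V_0=\ell_1$, $\rho_1=\n{\cdot}_{\ell_1}$, and $(V_0,\rho_1)'=\ell_\infty$; the functional $x\mapsto\sum_n x_n$ is a $\rho_1$-contraction on $V_0$ but is not $\ell_2$-continuous and admits no extension in $V'=\ell_2$. Nothing in Step 2 guarantees that the $\psi$ you pick avoids such functionals: the statement that the $V$-continuous elements of the gauge-dual ball are norming (at all matrix levels) for the gauge is essentially the bipolar theorem, i.e., the very result to be proved, so the gap cannot be closed from inside your framework. The closedness of $\mathbf{K}$ in $V$ is the only handle on the topology of $V$, and your argument uses it only to get $\rho_n(v_0)>1$, which is not enough to recover $V$-continuity of the separating matrix functional.

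A secondary issue: you assert $\n{v_0}_{M_n(V_0)}>1$, which presupposes $\rho_n(v_0)<\infty$, i.e., that every entry of $v_0$ lies in $V_0=\bigcup_{r>0}rK_1$. If some entry of $v_0$ is not absorbed by $K_1$, $v_0$ is not an element of $M_n(V_0)$ and Steps 2--3 do not apply; this case needs (and admits) a separate argument, but you do not address it. The standard proof avoids both problems by running in the opposite order: one first applies the classical Hahn--Banach separation \emph{in $M_n(V)$} to the closed convex set $K_n$ and the point $v_0$, obtaining a $V$-continuous scalar functional $F\in M_n(V)'$, and then uses the absolute matrix convexity of the whole family $(K_m)_m$ (the Effros--Winkler/Effros--Webster compression trick, evaluating $\mpair{v'}{v}$ against unit vectors of $\C^{mn}$ and recognizing the result as $F$ applied to an element of $K_n$) to upgrade $F$ to the required $v'\in M_n(V')$. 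There the membership $v'\in M_n(V')$ is automatic because $v'$ is built from a functional that is continuous on $M_n(V)$ from the start. Your Steps 1 and 2 (gauge, Smith's lemma, the identity $\mpair{v'}{v}=\phi_m(v)$ up to the canonical shuffle, and $K_m\subseteq M_m(V_0)$) are all correct and cleanly executed, but they solve the separation problem for the wrong dual.
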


It is clear that the closed unit ball of $\ell_1$ is the closure of the absolutely convex hull of the canonical basis. More generally, loosely speaking, the closed unit ball of an $\ell_1$-sum of Banach spaces $\ell_1( \{ X_i\}_{i \in I} )$ is the closure of the absolutely convex hull of the union of the individual balls $B_{X_i}$ (specifically, the union of the unit balls of the canonical copies of the $X_i$ inside $\ell_1( \{ X_i\}_{i \in I} )$).
We present an operator space version of this fact:

\begin{proposition}\label{prop-matrix-unit-ball-of-ell1-sum}
Let $\{V_i\}_{i \in I}$ be a collection of operator spaces. For each $i_0\in I$ let $J_{i_0} : V_{i_0} \to \ell_1( \{ V_i\}_{i \in I} )$ be the canonical complete isometry, that is, the one that sends $v \in V_{i_0}$ to the vector with $v$ in the $i_0$-th position and 0 everywhere else.
Then the closed matrix unit ball of $\ell_1( \{ V_i\}_{i \in I} )$ is the closure of the absolutely matrix convex hull of $\bigcup_{i\in I} J_i(\mathbf{B}_{V_i})$.
\end{proposition}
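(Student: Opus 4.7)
The plan is to prove the two inclusions separately: the ``easy'' inclusion showing that the closed matrix unit ball contains the absolutely matrix convex hull (and hence its closure), and the ``hard'' inclusion going the other way using the matricial Hahn-Banach Theorem \ref{thm-Hahn-Banach-matrix-convexity} together with the duality between $\ell_1$-sums and $\ell_\infty$-sums of operator spaces.

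For the first inclusion, I would note that each $J_i$ is a complete isometry, so $J_i(\mathbf{B}_{V_i}) \subseteq \mathbf{B}_{\ell_1(\{V_j\}_{j \in I})}$ for every $i \in I$. Since the closed matrix unit ball $\mathbf{B}_{\ell_1(\{V_j\}_{j \in I})}$ is absolutely matrix convex (by Ruan's axioms) and closed, it contains the closed absolutely matrix convex hull of $\bigcup_i J_i(\mathbf{B}_{V_i})$.

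For the reverse inclusion, let $\mathbf{K} = \overline{\amconv\bigl(\bigcup_{i\in I} J_i(\mathbf{B}_{V_i})\bigr)}$, and suppose for contradiction that there exist $n\in\N$ and $v_0 \in M_n(\ell_1(\{V_i\}_{i \in I}))$ with $\n{v_0} \le 1$ but $v_0 \notin K_n$. Applying Theorem \ref{thm-Hahn-Banach-matrix-convexity} to the closed absolutely matrix convex set $\mathbf{K}$, I obtain $v' \in M_n((\ell_1(\{V_i\}_{i \in I}))')$ such that $\n{\mpair{v'}{v}}_{M_{mn}} \le 1$ for all $m$ and $v \in K_m$, while $\n{\mpair{v'}{v_0}}_{M_{n^2}} > 1$. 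Under the canonical complete isometry $(\ell_1(\{V_i\}_{i \in I}))' \cong \ell_\infty(\{V_i'\}_{i \in I})$, the element $v'$ corresponds to a family $(v'_i)_{i \in I}$ with $v'_i \in M_n(V_i')$. Evaluating the first inequality on $v = J_i(w)$ for $w \in \mathbf{B}_{V_i}$ gives $\n{\mpair{v'_i}{w}}_{M_{mn}} \le 1$ for each $i$ and each $w \in \mathbf{B}_{V_i}$, which by the standard duality description of $M_n(V_i')$ as $\CB(V_i, M_n)$ means $\n{v'_i}_{M_n(V_i')} \le 1$ for every $i \in I$.

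By the matrix norm formula on $\ell_\infty$-sums, this yields $\n{v'}_{M_n(\ell_\infty(\{V_i'\}_{i \in I}))} = \sup_{i} \n{v'_i}_{M_n(V_i')} \le 1$. The key final step is then the standard estimate for the matrix pairing:
\[
\n{\mpair{v'}{v_0}}_{M_{n^2}} \le \n{v'}_{M_n(W')}\, \n{v_0}_{M_n(W)} \le 1,
\]
where $W = \ell_1(\{V_i\}_{i \in I})$. This contradicts $\n{\mpair{v'}{v_0}}_{M_{n^2}} > 1$, so no such $v_0$ exists and $\mathbf{B}_{\ell_1(\{V_i\}_{i \in I})} \subseteq \mathbf{K}$.

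I do not expect any serious obstacle: the argument is the operator space analogue of the classical Banach space fact, and the only nontrivial ingredients are the matricial Hahn-Banach theorem (which is already recorded as Theorem \ref{thm-Hahn-Banach-matrix-convexity}) and the identification of the dual of an $\ell_1$-sum with the corresponding $\ell_\infty$-sum at the matrix level, both of which are standard in the theory.
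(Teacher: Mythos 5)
Your proof is correct and follows essentially the same route as the paper: the easy inclusion from the complete isometry of the $J_i$, then a separation via the matricial Hahn--Banach theorem (Theorem \ref{thm-Hahn-Banach-matrix-convexity}). The only cosmetic difference is at the very end, where you pass to the identification $(\ell_1(\{V_i\}_{i\in I}))' \cong \ell_\infty(\{V_i'\}_{i\in I})$ and a pairing estimate, while the paper reads the two separation conditions directly as $\n{v'}_{\cb}>1$ versus $\n{v'J_i}_{\cb}\le 1$ for all $i$ and invokes the universal property of $\ell_1$-sums --- these are equivalent formulations of the same contradiction.
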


\begin{proof}
Since each $J_i$ is a complete isometry, it is clear that  $\bigcup_{i\in I} J_i(\mathbf{B}_{V_i}) \subseteq \mathbf{B}_{\ell_1( \{ V_i\}_{i \in I} )}$ and therefore
$\overline{\amconv\Big( \bigcup_{i\in I} J_i(\mathbf{B}_{V_i}) \Big)} \subseteq \mathbf{B}_{\ell_1( \{ V_i\}_{i \in I} )}$.
If the inclusion is strict, there is $n\in\N$ and $x_0 \in  B_{M_n( \ell_1( \{ V_i\}_{i \in I} ) )}$ which does not belong to $\overline{\amconv\Big( \bigcup_{i\in I} J_i(\mathbf{B}_{V_i}) \Big)}$, so by Theorem \ref{thm-Hahn-Banach-matrix-convexity} we can find $v' \in M_n(\ell_1( \{ V_i\}_{i \in I} )') \equiv \CB( \ell_1( \{ V_i\}_{i \in I}) ,M_n)$ such that $\n{ \mpair{v'}{x_0} }_{M_{n^2}} > 1$
but for all $m\in\N$ and all $x$ in the $m$-th level of $\amconv\Big( \bigcup_{i\in I} J_i(\mathbf{B}_{V_i}) \Big)$ we have $\n{ \mpair{v'}{x} }_{M_{mn}} \le 1$. 
The first condition says that $\n{v'}_{\CB(\ell_1( \{ V_i\}_{i \in I} ),M_n)}>1$, whereas the second says that for every $i\in I$ we have $\n{v'J_i}_{\CB(V_i,M_n)} \le 1$, which contradicts the universal property of $\ell_1$-sums of operator spaces.
\end{proof}

As announced, now we associate to a given operator $p$-compact matrix set a canonical operator $p$-compact mapping. For this, given $x$ an element of $\mathbf{K} = (K_n)_n$, a matrix set over the operator space $V$, we denote by $n_x \in \N$  the matrix level to which $x$ belongs, that is, $x \in M_{n_x}(V)$.
If $\mathbf{K}$ is completely bounded, define a map  $u_\mathbf{K} : \ell_1\big( \{ \mathcal{S}^{n_x}_1 \}_{x \in \mathbf{K}}\big) \to V$ by
\[
 \xi = \big( \xi(x)  \big)_{x\in \mathbf{K}} \mapsto \sum_{x \in \mathbf{K}} \Theta^x(\xi(x)).
\]
Note that this indeed makes sense, since $\n{\Theta^x(\xi(x))} \le \n{\Theta^x}_{\CB(\mathcal{S}_1^{n_x},V)} \n{\xi(x)}_{\mathcal{S}_1^{n_x}} = \n{x}_{M_n(V)} \n{\xi(x)}_{\mathcal{S}_1^{n_x}}$, so when $\mathbf{K}$ is completely bounded and $\xi \in \ell_1\big( \{ \mathcal{S}^{n_x}_1 \}_{x \in \mathbf{K}}\big)$ the series above is absolutely convergent.

The following theorem is a noncommutative partial version of  \cite[Prop. 3.5]{Delgado-Pineiro-Serrano-adjoints}.

\begin{theorem}\label{thm:equivalences-p-compact-u}
Let $\mathbf{K}$ be a completely bounded matrix set over $V$.
The following are equivalent:
\begin{enumerate}[(i)]
\item $\mathbf{K}$ is relatively operator $p$-compact.
\item $u_\mathbf{K} : \ell_1\big( \{ \mathcal{S}^{n_x}_1 \}_{x \in \mathbf{K}}\big) \to V$ is operator $p$-compact.
\end{enumerate}
Moreover, in this case $\frak m_p^o(\mathbf{K}) = \kappa_p^o(u_\mathbf{K})$.
\end{theorem}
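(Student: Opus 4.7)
The plan is to sandwich $\mathbf{K}$ and $u_\mathbf{K}(\mathbf{B}_{\ell_1(\{\mathcal{S}_1^{n_x}\}_x)})$ between one another (up to closure and absolutely matrix convex hull), and then appeal to the fact that $\frak m_p^o$ is preserved under these operations by Lemmas \ref{lemma-p-compact-closure} and \ref{lemma-amconv-preserves-p-compactness}. The key ingredients will be: the description of $\mathbf{B}_{\ell_1(\cdots)}$ from Proposition \ref{prop-matrix-unit-ball-of-ell1-sum}, the fact that $u_\mathbf{K}\circ J_x = \Theta^x$, the assignment of an absolutely-matrix-convex set to each $\Theta^x$ through Lemma \ref{lemma:abs-matrix-convex-properties}(b), and the observation that the amplification of a linear map preserves absolute matrix convexity.

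\medskip

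For the implication (ii)$\Rightarrow$(i), I will show the inclusion $\mathbf{K}\subseteq u_\mathbf{K}(\mathbf{B}_{\ell_1(\cdots)})$, which together with the hypothesis and the definition $\kappa_p^o(u_\mathbf{K})=\frak m_p^o(u_\mathbf{K}(\mathbf{B}_{\ell_1(\cdots)}))$ immediately yields $\frak m_p^o(\mathbf{K})\le \kappa_p^o(u_\mathbf{K})$. Given $x\in K_{n_x}$, I will construct explicitly $\eta\in B_{M_{n_x}(\mathcal{S}_1^{n_x})}$ such that $(\Theta^x)_{n_x}(\eta)=x$; then $(J_x)_{n_x}(\eta)\in B_{M_{n_x}(\ell_1(\cdots))}$ and $(u_\mathbf{K})_{n_x}((J_x)_{n_x}(\eta))=x$. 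Concretely I take $\eta=(E_{kl})_{k,l=1}^{n_x}$, where $E_{kl}\in\mathcal{S}_1^{n_x}$ are the matrix units: under the completely isometric identification $M_{n_x}(\mathcal{S}_1^{n_x})\equiv \CB(M_{n_x},M_{n_x})$ this $\eta$ corresponds precisely to $Id_{M_{n_x}}$, so $\n{\eta}=1$, and a direct computation gives $(\Theta^x)_{n_x}(\eta)_{kl}=\Theta^x(E_{kl})=x_{kl}$.

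\medskip

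For (i)$\Rightarrow$(ii), first note that $u_\mathbf{K}$ is well-defined and completely bounded: by the universal property of the operator-space $\ell_1$-sum, $\n{u_\mathbf{K}}_{\cb}=\sup_{x\in\mathbf{K}}\n{u_\mathbf{K} J_x}_{\cb}=\sup_{x\in\mathbf{K}}\n{\Theta^x}_{\cb}=\sup_{x\in\mathbf{K}}\n{x}<\infty$ by the complete boundedness assumption on $\mathbf{K}$. Now Proposition \ref{prop-matrix-unit-ball-of-ell1-sum} describes $\mathbf{B}_{\ell_1(\cdots)}$ as the closure of $\amconv\big(\bigcup_{x\in\mathbf{K}}J_x(\mathbf{B}_{\mathcal{S}_1^{n_x}})\big)$, so by continuity of the amplifications and the fact that linear maps preserve absolute matrix convex hulls,
\[
u_\mathbf{K}(\mathbf{B}_{\ell_1(\cdots)})\ \subseteq\ \overline{\amconv\Big(\bigcup_{x\in\mathbf{K}}\Theta^x(\mathbf{B}_{\mathcal{S}_1^{n_x}})\Big)}.
\]
By Lemma \ref{lemma:abs-matrix-convex-properties}(b), $\Theta^x(\mathbf{B}_{\mathcal{S}_1^{n_x}})\subseteq \amconv(\mathbf{K})$ for every $x\in\mathbf{K}$, so the right-hand side is contained in $\overline{\amconv(\mathbf{K})}$. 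Finally, Lemmas \ref{lemma-p-compact-closure} and \ref{lemma-amconv-preserves-p-compactness} give $\frak m_p^o(\overline{\amconv(\mathbf{K})})=\frak m_p^o(\mathbf{K})$, whence $\kappa_p^o(u_\mathbf{K})=\frak m_p^o(u_\mathbf{K}(\mathbf{B}_{\ell_1(\cdots)}))\le \frak m_p^o(\mathbf{K})$.

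\medskip

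The main technical point I expect to need care with is the construction of the preimage $\eta$ in the (ii)$\Rightarrow$(i) step: one has to verify that the natural ``identity-like'' element in $M_n(\mathcal{S}_1^n)$ really has operator-space norm equal to one, which is essentially the assertion that the canonical identification $M_n(\mathcal{S}_1^n)\equiv \CB(M_n,M_n)$ sends it to $Id_{M_n}$. Everything else is a clean bookkeeping argument combining the cited lemmas.
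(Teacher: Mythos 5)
Your proposal is correct and follows essentially the same route as the paper's proof: for (ii)$\Rightarrow$(i) the paper also uses the element $\xi=(E_{kl})_{k,l}\in M_{n_x}(\mathcal{S}_1^{n_x})$ corresponding to $Id_{M_{n_x}}$ to get $\mathbf{K}\subseteq u_\mathbf{K}(\mathbf{B}_{\ell_1(\cdots)})$, and for (i)$\Rightarrow$(ii) it likewise combines Lemma \ref{lemma:abs-matrix-convex-properties}(b), Proposition \ref{prop-matrix-unit-ball-of-ell1-sum}, and Lemmas \ref{lemma-p-compact-closure} and \ref{lemma-amconv-preserves-p-compactness} to trap $u_\mathbf{K}(\mathbf{B}_{\ell_1(\cdots)})$ inside $\overline{\amconv(\mathbf{K})}$. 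The only cosmetic difference is the order in which you push the closure and absolutely matrix convex hull through $u_\mathbf{K}$, which changes nothing of substance.
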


\begin{proof}
For $y\in \mathbf{K}$, denote by $J_y : \mathcal{S}_1^{n_y} \to \ell_1( \{ \mathcal{S}^{n_x}_1 \}_{x \in \mathbf{K}}) $ the canonical inclusion. It is obvious from the definition of $u_{\mathbf{K}}$ that for each such $y$ we have $u_{\mathbf{K}}J_y = \Theta^y$.

$(ii) \Rightarrow (i)$:
Fix $y\in \mathbf{K}$.
Let $\xi \in M_{n_y}(\mathcal{S}_1^{n_y}) \equiv \CB(M_{n_y},M_{n_y})$ be the matrix of norm one that corresponds to the identity map $M_{n_y} \to M_{n_y}$, which is the matrix $\xi = (E_{kl})_{k,l=1}^{n_y}$ of matrix units.
Since $\Theta^yE_{kl} = y_{kl}$ for all $1 \le k,l \le n_y$, we have that $y = (u_{\mathbf{K}}J_y)_{n_y}(\xi)$.
Since $J_y$ is a complete isometry we know $J_y(\mathbf{B}_{\mathcal{S}_1^{n_y}}) \subseteq \mathbf{B}_{\ell_1( \{ \mathcal{S}^{n_x}_1 \}_{x \in \mathbf{K}}) }$, and therefore we have proved
$$
\mathbf{K} \subseteq u_{\mathbf{K}}\big( \mathbf{B}_{\ell_1( \{ \mathcal{S}^{n_x}_1 \}_{x \in \mathbf{K}}) } \big).
$$

Now, using that $u_{\mathbf{K}}$ is operator $p$-compact we conclude $\mathbf{K}$ is relatively operator $p$-compact and $\frak m_p^o(\mathbf{K}) \le \kappa_p^o(u_\mathbf{K})$.

$(i) \Rightarrow (ii)$:
For $x\in \mathbf{K}$, $m\in\N$ and $\xi \in B_{M_m(\mathcal{S}_1^{n_x})}$, note that $(u_{\mathbf{K}}J_x)_m(\xi) = (\Theta^x)_m(\xi)$
which belongs to $\amconv(\mathbf{K})$ by Lemma \ref{lemma:abs-matrix-convex-properties}.
This shows that $u_{\mathbf{K}} \Big( \bigcup_{x\in \mathbf{K}} J_x(\mathbf{B}_{\mathcal{S}_1^{n_x}}) \Big) \subseteq \amconv(\mathbf{K})$, from where it follows by linearity of $u_{\mathbf{K}}$ that
$u_{\mathbf{K}} \big( \amconv \Big( \bigcup_{x\in \mathbf{K}} J_x(\mathbf{B}_{\mathcal{S}_1^{n_x}}) \Big) \Big)\subseteq \amconv(\mathbf{K})$ and therefore, since $u_\mathbf{K}$ is continuous,
\[
u_{\mathbf{K}} \Big( \overline{\amconv \Big( \bigcup_{x\in \mathbf{K}} J_x(\mathbf{B}_{\mathcal{S}_1^{n_x}})} \Big) \Big) \subseteq
\overline{u_{\mathbf{K}} \Big( \amconv \Big( \bigcup_{x\in \mathbf{K}} J_x(\mathbf{B}_{\mathcal{S}_1^{n_x}}) \Big) \Big)}\subseteq \overline{\amconv(\mathbf{K})}.
\]
The desired conclusion now follows from Lemmas \ref{lemma-p-compact-closure} and 
 \ref{lemma-amconv-preserves-p-compactness},
and Proposition \ref{prop-matrix-unit-ball-of-ell1-sum}.
\end{proof}

\begin{remark} 
Note that the version of Theorem \ref{thm:equivalences-p-compact-u} for operator weakly $p$-compact matrix sets/mappings is also true, because Lemmas \ref{lemma-p-compact-closure} and 
 \ref{lemma-amconv-preserves-p-compactness} have versions for  operator weakly $p$-compact matrix sets (with analogous proofs).
More generally, if we have a class of matrix sets which is invariant under taking closures and absolutely matrix convex hulls, a version of Theorem \ref{thm:equivalences-p-compact-u} holds for the associated class of linear mappings. For example, a matrix set $\mathbf{K}$ is completely bounded if and only if so is $u_{\mathbf{K}}$.
\end{remark}

We will say that a matrix set $\mathbf{K} = (K_n)_n$ has \emph{finite height}
if there exists $N\in\N$ such that $K_n = \emptyset$ for $n \ge N$.
For such matrix sets and under the assumption of local reflexivity, we can get a full noncommutative version of \cite[Prop. 3.5]{Delgado-Pineiro-Serrano-adjoints}.

\begin{theorem}\label{thm:equivalences-p-compact-j}
Let $\mathbf{K}$ be a completely bounded matrix set of finite height over a locally reflexive operator space $V$.
The following are equivalent:
\begin{enumerate}[(i)]
\item $\mathbf{K}$ is relatively operator $p$-compact.
\item $u_\mathbf{K} : \ell_1\big( \{ \mathcal{S}^{n_x}_1 \}_{x \in \mathbf{K}}\big) \to V$ is operator $p$-compact.
\item $u_{\mathbf{K}}' : V' \to \ell_\infty\big( \{M_{n_x}\}_{x \in \mathbf{K}} \big)$ is completely $p$-nuclear.
\end{enumerate}
Moreover, in this case $\frak m_p^o(\mathbf{K}) = \kappa_p^o(u_\mathbf{K})=\nu_p^o(u_\mathbf{K}')$.
\end{theorem}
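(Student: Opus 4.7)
The equivalence (i)$\Leftrightarrow$(ii) together with the equality $\frak m_p^o(\mathbf{K}) = \kappa_p^o(u_\mathbf{K})$ is exactly Theorem \ref{thm:equivalences-p-compact-u}, so the plan is to prove (ii)$\Leftrightarrow$(iii) together with $\kappa_p^o(u_\mathbf{K})=\nu_p^o(u_\mathbf{K}')$. Write $W_0 = \ell_1(\{\mathcal{S}_1^{n_x}\}_{x \in \mathbf{K}})$ and $Z = W_0' = \ell_\infty(\{M_{n_x}\}_{x \in \mathbf{K}})$. The finite height hypothesis means the integers $n_x$ are uniformly bounded by some $N \in \mathbb{N}$. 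I will exploit two structural facts: first, $Z$ is an injective operator space, being an $\ell_\infty$-sum of the injective spaces $M_{n_x}$; second, $W_0$ is $N$-maximal, since each $\mathcal{S}_1^{n_x}$ is completely contractively complemented in $\mathcal{S}_1^N$ and consequently $W_0$ is a completely contractively complemented subspace of $\ell_1(\mathbf{K};\mathcal{S}_1^N)$, which is $N$-maximal.

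For (ii)$\Rightarrow$(iii): apply Proposition \ref{prop:adjoint-of-p-compact}(a) to obtain $u_\mathbf{K}' \in \mathcal{QN}_p^o(V',Z)$ with $q\nu_p^o(u_\mathbf{K}') \le \kappa_p^o(u_\mathbf{K})$. Since $Z$ is injective, the completely isometric embedding appearing in Definition \ref{defn-quasi-completely-p-nuclear} can be taken to be the identity of $Z$, so $u_\mathbf{K}' \in \mathcal{N}_p^o(V',Z)$ with $\nu_p^o(u_\mathbf{K}') = q\nu_p^o(u_\mathbf{K}') \le \kappa_p^o(u_\mathbf{K})$.

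For (iii)$\Rightarrow$(ii): suppose $u_\mathbf{K}' \in \mathcal{N}_p^o(V',Z)$. The containment $\mathcal{N}_p^o \subseteq \mathcal{QN}_p^o$ (with $q\nu_p^o \le \nu_p^o$) together with Proposition \ref{prop:adjoint-of-p-compact}(b) yields $u_\mathbf{K}'' \in \mathcal{K}_p^o(W_0'',V'')$ with $\kappa_p^o(u_\mathbf{K}'') \le \nu_p^o(u_\mathbf{K}')$. Using the identity $\iota_V \circ u_\mathbf{K} = u_\mathbf{K}'' \circ \iota_{W_0}$ and the fact that $\iota_{W_0}$ is a complete isometry, the ideal property gives $\iota_V \circ u_\mathbf{K} \in \mathcal{K}_p^o(W_0,V'')$ with $\kappa_p^o(\iota_V \circ u_\mathbf{K}) \le \kappa_p^o(u_\mathbf{K}'')$. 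Now invoke Corollary \ref{cor-compactness-in-the-bidual-for-operators}, whose two hypotheses are met: $W_0$ is $N$-maximal by the finite height assumption, and $V$ is locally reflexive by assumption on the matrix set. This conclude $u_\mathbf{K} \in \mathcal{K}_p^o(W_0,V)$ with $\kappa_p^o(u_\mathbf{K}) = \kappa_p^o(\iota_V \circ u_\mathbf{K}) \le \nu_p^o(u_\mathbf{K}')$. Chaining all inequalities produces the equality $\kappa_p^o(u_\mathbf{K}) = \nu_p^o(u_\mathbf{K}')$ and finishes the proof.

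The essential obstacle, and the reason the finite height hypothesis is needed, is the application of Corollary \ref{cor-compactness-in-the-bidual-for-operators}: that regularity result requires the domain of the relevant map to be $N$-maximal, which for $W_0 = \ell_1(\{\mathcal{S}_1^{n_x}\}_{x \in \mathbf{K}})$ holds precisely when the integers $n_x$ are uniformly bounded. Everything else is routine bookkeeping with the results of Section \ref{sec-adjoints} and the fact that quasi completely $p$-nuclear mappings into an injective codomain are automatically completely $p$-nuclear.
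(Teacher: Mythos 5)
Your proof is correct, and the direction $(ii)\Rightarrow(iii)$ coincides with the paper's argument verbatim (Proposition \ref{prop:adjoint-of-p-compact}(a) plus injectivity of $\ell_\infty\big(\{M_{n_x}\}_{x\in\mathbf{K}}\big)$). For $(iii)\Rightarrow(ii)$ you take a genuinely different, though closely parallel, route. The paper dualizes the completely $p$-nuclear factorization of $u_{\mathbf{K}}'$ directly to conclude that $\iota_V u_{\mathbf{K}}$ is completely \emph{right} $p$-nuclear with $\nu^p_o(\iota_V u_{\mathbf{K}})\le\nu_p^o(u_{\mathbf{K}}')$, and then applies the regularity result for $\mathcal{N}^p_o$ (Theorem \ref{theorem-Pietsch}), verifying its CMAP, strong local reflexivity and local reflexivity hypotheses via the finite height assumption; this yields the stronger intermediate conclusion that $u_{\mathbf{K}}$ itself is completely right $p$-nuclear, which is then downgraded to operator $p$-compactness. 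You instead pass through Proposition \ref{prop:adjoint-of-p-compact}(b) to get $u_{\mathbf{K}}''\in\mathcal{K}_p^o(W_0'',V'')$, restrict to $\iota_V u_{\mathbf{K}}$, and invoke the regularity of $\mathcal{K}_p^o$ (Corollary \ref{cor-compactness-in-the-bidual-for-operators}); for this you must supply the $N$-maximality of $W_0=\ell_1\big(\{\mathcal{S}_1^{n_x}\}_{x\in\mathbf{K}}\big)$, which you do correctly (it is a complete $1$-quotient of the $N$-maximal space $\ell_1(\mathbf{K};\mathcal{S}_1^N)$ via the blockwise compressions, and Lemma \ref{lemma-N-maximal-is-quotient-of-ell_1({S}_1^N)} records that complete quotients of $N$-maximal spaces are $N$-maximal). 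Since Corollary \ref{cor-compactness-in-the-bidual-for-operators} is itself deduced from Theorem \ref{theorem-Pietsch}, both arguments rest on the same engine, and there is no circularity (the corollary's original proof in Section \ref{sec-regularity} is independent of Theorem \ref{thm:equivalences-p-compact-j}; only the ``alternative proof'' given later would be circular here). What your route buys is modularity --- the finite height hypothesis enters through a single clean condition ($N$-maximality of the domain) rather than through a list of approximation/reflexivity properties of $W_0$ and its dual; what it loses is the sharper byproduct that $u_{\mathbf{K}}$ is in fact completely right $p$-nuclear, not merely operator $p$-compact. The quantitative bookkeeping in both directions is correct and yields $\mathfrak{m}_p^o(\mathbf{K})=\kappa_p^o(u_{\mathbf{K}})=\nu_p^o(u_{\mathbf{K}}')$ as required.
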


\begin{proof} 
$(i) \Leftrightarrow (ii)$: this was proved in
 Theorem \ref{thm:equivalences-p-compact-u}.

$(ii) \Rightarrow (iii)$:
By Proposition \ref{prop:adjoint-of-p-compact}, $u_\mathbf{K}'$ is quasi completely $p$-nuclear and $q\nu_p^o(u_\mathbf{K}') \le \kappa_p^o(u_\mathbf{K})$.
Since $\ell_\infty\big( \{M_{n_x}\}_{x \in \mathbf{K}} \big)$ is injective, it follows that $u_\mathbf{K}'$ is completely $p$-nuclear and $\nu_p^o(u_\mathbf{K}') \le \kappa_p^o(u_\mathbf{K})$.

$(iii) \Rightarrow (ii)$:
By dualizing the  commutative diagram associated to the completely $p$-nuclear mapping $u_\mathbf{K}'$, we obtain that $u_\mathbf{K}'' \in \mathcal{N}^p_o\big( (\ell_\infty\big( \{M_{n_x}\}_{x \in \mathbf{K}} \big))',V''\big)$
with $\nu^p_o(u_\mathbf{K}'') \le \nu_p^o(u_\mathbf{K}')$
(note that in the case $p=1$, we need to use Lemma \ref{lemma:multiplication-operator-on-B(ell_2)})
and therefore $\iota_V u_\mathbf{K} \in \mathcal{N}^p_o\big(  \ell_1\big( \{ \mathcal{S}^{n_x}_1 \}_{x \in \mathbf{K}}\big)  , V'' \big)$ with
$$
\nu^p_o(\iota_V u_{\mathbf{K}} : \ell_1\big( \{ \mathcal{S}^{n_x}_1 \}_{x \in \mathbf{K}}\big) \to V'' ) \le \nu_p^o(u_{\mathbf{K}}').
$$
Since $\mathbf{K}$ has finite height, by Lemma \ref{lemma-ell_infty-has-CMAP} $ \big( \ell_1\big( \{ \mathcal{S}^{n_x}_1 \}_{x \in \mathbf{K}}\big) \big)' = \ell_\infty\big( \{ M_{n_x} \}_{x \in \mathbf{K}}\big) $ has CMAP.
The space $\ell_1\big( \{ \mathcal{S}^{n_x}_1 \}_{x \in \mathbf{K}}\big)$ is strongly locally reflexive because its dual is the von Neumann algebra $\ell_\infty\big( \{ M_{n_x} \}_{x \in \mathbf{K}}\big)$ \cite[Thm. 15.3.5]{Effros-Ruan-book}.
Moreover $V$ is locally reflexive  and  $\ell_\infty\big( \{ M_{n_x} \}_{x \in \mathbf{K}}\big)$ too (if $\mathbf{K}$ has finite height $N$, this space is contained in  $\ell_\infty\big( \{ M_{N} \}_{x \in \mathbf{K}}\big)$, see the arguments for this case in the proof of Corollary \ref{cor-compactness-in-the-bidual-for-operators}), so by Theorem \ref{theorem-Pietsch} 
we get that $u_\mathbf{K} \in \mathcal{N}^p_o\big(  \ell_1\big( \{ \mathcal{S}^{n_x}_1 \}_{x \in \mathbf{K}}\big)  , V \big)$ with
$$
\nu^p_o(u_{\mathbf{K}} : \ell_1\big( \{ \mathcal{S}^{n_x}_1 \}_{x \in \mathbf{K}}\big) \to V ) \le \nu_p^o(u_{\mathbf{K}}').
$$
Since every completely right $p$-nuclear mapping is operator $p$-compact, we can now conclude that $u_\mathbf{K} \in \mathcal{K}_p^o\big(  \ell_1\big( \{ \mathcal{S}^{n_x}_1 \}_{x \in \mathbf{K}}\big)  , V \big)$ with $\kappa_p^o(u_{\mathbf{K}}) \le \nu_p^o(u_{\mathbf{K}}')$. 
\end{proof}

We remark that the map $u_{\mathbf{K}}'$ appearing in the previous result has a nice expression. Under the identification $ \big( \ell_1\big( \{ \mathcal{S}^{n_x}_1 \}_{x \in \mathbf{K}}\big) \big)' = \ell_\infty\big( \{ M_{n_x} \}_{x \in \mathbf{K}}\big) $, we have that
$$
u_\mathbf{K}' : V' \to \ell_\infty\big( \{M_{n_x}\}_{x \in \mathbf{K}} \big), \qquad v' \mapsto \big( \mpair{v'}{x}  \big)_{x\in \mathbf{K}}.
$$

Recall that in the classical case, a subset of a Banach space is relatively $p$-compact if and only if it is relatively $p$-compact in the bidual \cite[Thm. 2.4]{Galicer-Lassalle-Turco}.
We now prove an analogous result in the operator space setting, though with the extra hypotheses of local reflexivity and finite height.

\begin{proposition}\label{prop-compactness-in-bidual-for-sets}
Let $\mathbf{K}$ be a completely bounded matrix set of finite height over a locally reflexive operator space $V$.
The following are equivalent:
\begin{enumerate}[(i)]
\item $\mathbf{K}$ is relatively operator $p$-compact.
\item $\iota_V\mathbf{K}$ is relatively operator $p$-compact.
\item $\amconv(\mathbf{K})$ is relatively operator $p$-compact.
\end{enumerate}
Moreover,
$\frak m_p^o(\mathbf{K}) = \frak m_p^o(\iota_V \mathbf{K}) = \frak m_p^o(\amconv(\mathbf{K}))$.
\end{proposition}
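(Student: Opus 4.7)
The equivalence (i)~$\Leftrightarrow$~(iii), together with the equality $\frak m_p^o(\mathbf{K}) = \frak m_p^o(\amconv(\mathbf{K}))$, is exactly the content of Lemma~\ref{lemma-amconv-preserves-p-compactness} and needs nothing further. So the plan concentrates on (i)~$\Leftrightarrow$~(ii). The forward direction is essentially free: if $v = (v_{ij}) \in \mathcal{S}_p[V]$ realizes $\mathbf{K} \subseteq \Theta^v(\mathbf{B}_{\mathcal{S}_{p'}})$, then $\iota_V$ being a complete isometry induces a complete isometry $\mathcal{S}_p[V] \hookrightarrow \mathcal{S}_p[V'']$ sending $v$ to $\widetilde v := (\iota_V v_{ij})$, and clearly $\iota_V \mathbf{K} \subseteq \Theta^{\widetilde v}(\mathbf{B}_{\mathcal{S}_{p'}})$, giving $\frak m_p^o(\iota_V \mathbf{K}) \le \frak m_p^o(\mathbf{K})$.

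The substance is in (ii)~$\Rightarrow$~(i). The strategy is to convert the statement about matrix sets into one about mappings via Theorem~\ref{thm:equivalences-p-compact-u}, apply the regularity result Corollary~\ref{cor-compactness-in-the-bidual-for-operators}, and convert back. To set up Corollary~\ref{cor-compactness-in-the-bidual-for-operators} one needs the domain to be $N$-maximal and the codomain to be locally reflexive. Here the codomain is $V$, which is locally reflexive by hypothesis, and the domain will be $\ell_1\big(\{\mathcal{S}_1^{n_x}\}_{x\in\mathbf{K}}\big)$. This is precisely where the \emph{finite height} assumption enters: since there is a uniform bound $N$ on the matrix levels $n_x$, this $\ell_1$-sum is $N$-maximal (this is noted in the paragraph preceding Lemma~\ref{lemma-N-maximal-is-quotient-of-ell_1({S}_1^N)}).

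With these pieces in place, assume (ii). Note that the indexing of elements is unaltered by the injection $\iota_V$, so $u_{\iota_V \mathbf{K}} = \iota_V \circ u_{\mathbf{K}}$ as maps from $\ell_1\big(\{\mathcal{S}_1^{n_x}\}_{x\in\mathbf{K}}\big)$ to $V''$. Applying Theorem~\ref{thm:equivalences-p-compact-u} (with codomain $V''$) to the operator $p$-compact matrix set $\iota_V\mathbf{K}$ yields that $\iota_V u_{\mathbf{K}}$ is operator $p$-compact and
\[
\kappa_p^o(\iota_V u_{\mathbf{K}}) = \frak m_p^o(\iota_V \mathbf{K}).
\]
Corollary~\ref{cor-compactness-in-the-bidual-for-operators} then gives $u_{\mathbf{K}} \in \mathcal{K}_p^o\big(\ell_1(\{\mathcal{S}_1^{n_x}\}_{x\in\mathbf{K}}), V\big)$ with $\kappa_p^o(u_{\mathbf{K}}) = \kappa_p^o(\iota_V u_{\mathbf{K}})$. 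A final appeal to Theorem~\ref{thm:equivalences-p-compact-u} (now with codomain $V$) produces the relative operator $p$-compactness of $\mathbf{K}$ together with $\frak m_p^o(\mathbf{K}) = \kappa_p^o(u_{\mathbf{K}})$. Chaining the three equalities closes the circle: $\frak m_p^o(\mathbf{K}) = \frak m_p^o(\iota_V \mathbf{K})$, which together with (i)~$\Leftrightarrow$~(iii) yields the full statement.

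The only real obstacle is ensuring the hypotheses of Corollary~\ref{cor-compactness-in-the-bidual-for-operators} hold, and this is exactly where the two standing assumptions (local reflexivity of $V$ and finite height of $\mathbf{K}$) are used; no further technical work is needed beyond citing the theorems.
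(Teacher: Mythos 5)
Your proof is correct, but it routes the key implication $(ii)\Rightarrow(i)$ differently from the paper. The paper stays entirely on the ``adjoint'' side: it applies Theorem \ref{thm:equivalences-p-compact-j} to $\iota_V\mathbf{K}$ to get that $u_{\iota_V\mathbf{K}}' : V''' \to \ell_\infty\big(\{M_{n_x}\}\big)$ is completely $p$-nuclear, restricts this map to $V'\subseteq V'''$ to obtain that $u_{\mathbf{K}}'$ is completely $p$-nuclear, and then invokes the implication $(iii)\Rightarrow(i)$ of that same theorem. You instead work with $u_{\mathbf{K}}$ itself, using the identity $u_{\iota_V\mathbf{K}}=\iota_V u_{\mathbf{K}}$ together with Theorem \ref{thm:equivalences-p-compact-u} and the regularity result Corollary \ref{cor-compactness-in-the-bidual-for-operators}. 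Both routes ultimately rest on the same engine (Theorem \ref{theorem-Pietsch} via the CMAP and strong local reflexivity of $\ell_1\big(\{\mathcal{S}_1^{n_x}\}\big)$), and both use the hypotheses in the same places, so neither is more economical; your version is arguably cleaner in that it avoids the triple dual. Two caveats. First, the paper deliberately proves this proposition \emph{without} Corollary \ref{cor-compactness-in-the-bidual-for-operators} precisely so that it can afterwards offer the ``alternative proof'' of that corollary via this proposition; your argument is not circular because the corollary has an independent first proof, but if one adopted your route the subsequent alternative proof of the corollary would become so. Second, you cite the paragraph before Lemma \ref{lemma-N-maximal-is-quotient-of-ell_1({S}_1^N)} for the $N$-maximality of $\ell_1\big(\{\mathcal{S}_1^{n_x}\}_{x\in\mathbf{K}}\big)$, but that paragraph only asserts it for $\ell_1(I;\mathcal{S}_1^N)$ with constant block size; you should add the one-line observation that for $n_x\le N$ your space is a complete quotient of $\ell_1(I;\mathcal{S}_1^N)$ and that complete quotients of $N$-maximal spaces are $N$-maximal (as noted in the proof of that lemma). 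Also, in the easy direction $(i)\Rightarrow(ii)$ you claim $\mathcal{S}_p[V]\hookrightarrow\mathcal{S}_p[V'']$ is a complete isometry, which is more than you need and more than is obvious; complete contractivity of $\mathrm{Id}_{\mathcal{S}_p}\otimes\iota_V$ suffices for the inequality $\frak m_p^o(\iota_V\mathbf{K})\le\frak m_p^o(\mathbf{K})$.
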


\begin{proof}
$(i) \Leftrightarrow (iii)$: Follows from Lemma \ref{lemma-amconv-preserves-p-compactness}, including $\frak m_p^o(\mathbf{K}) = \frak m_p^o(\amconv(\mathbf{K}))$.

$(i) \Rightarrow (ii)$: It is obvious from the definition that if $\mathbf{K}$ is relatively operator $p$-compact then $\iota_V\mathbf{K}$ is also relatively operator $p$-compact, and $\frak m_p^o(\iota_V \mathbf{K}) \le \frak m_p^o(\mathbf{K})$.

$(ii) \Rightarrow (i)$: Suppose now that $\iota_V\mathbf{K}$ is relatively operator $p$-compact in $V''$. 
 By Theorem \ref{thm:equivalences-p-compact-j},
 $u_{\iota_V \mathbf{K}}' : V''' \to \ell_\infty\big( \{M_{n_x}\}_{x \in \mathbf{K}} \big)$ is completely $p$-nuclear (note that no additional hypotheses are needed for the implication $(i)\Rightarrow (iii)$ of the referred theorem).
 Taking a restriction, $u_\mathbf{K}' : V' \to \ell_\infty\big( \{M_{n_x}\}_{x \in \mathbf{K}} \big)$ is completely $p$-nuclear.
 Another application of Theorem \ref{thm:equivalences-p-compact-j} gives that $\mathbf{K}$ is relatively operator $p$-compact in $V$.
 Moreover, all of the steps above are quantitative and we get $\frak m_p^o(\mathbf{K}) \le \frak m_p^o(\iota_V \mathbf{K})$.
\end{proof}

Lemma \ref{lemma-N-maximal-amconv} below provides a characterization of $N$-maximal operator spaces which is well-known to specialists, but we have not been able to find a reference for it in the literature so we include its proof.

\begin{lemma}\label{lemma-N-maximal-amconv}
An operator space $V$ is $N$-maximal if and only if $\mathbf{B}_V = \overline{\amconv (\mathbf{K})}$, where $\mathbf{K} = (B_{M_n(V)})_{n=1}^N$.
\end{lemma}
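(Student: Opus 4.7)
The plan is to establish a structural description of $\amconv(\mathbf{K})$: for $\mathbf{K} = (B_{M_n(V)})_{n=1}^N$, every element of level $m$ of $\amconv(\mathbf{K})$ is of the form $a(x_1 \oplus \cdots \oplus x_k) b$ where $x_i \in B_{M_{n_i}(V)}$ with $n_i \le N$, $a \in M_{m, \sum n_i}$ and $b \in M_{\sum n_i, m}$ are contractive. This follows by checking that such elements form an absolutely matrix convex set containing $\mathbf{K}$, and are contained in any such set. I will also use the elementary fact that for any linear $T : V \to W$, the sequence $(\|T_n\|)_n$ is non-decreasing, obtained by padding matrices with zeros.

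For the ``if'' direction, assume $\mathbf{B}_V = \overline{\amconv(\mathbf{K})}$ and let $T : V \to W$ be any linear map. The inequality $\|T_N\| \le \|T\|_{\cb}$ is obvious. For the reverse, fix $m \in \N$ and $v \in B_{M_m(V)} = \overline{\amconv(\mathbf{K})}_m$. Approximating $v$ by elements $a(x_1 \oplus \cdots \oplus x_k)b$ as in the structural description, and using linearity of $T_m$, one gets $T_m(a(x_1\oplus\cdots\oplus x_k)b) = a(T_{n_1}(x_1) \oplus \cdots \oplus T_{n_k}(x_k))b$. The norm of the middle factor is at most $\max_i \|T_{n_i}\| \le \|T_N\|$ by the monotonicity noted above (since $n_i \le N$), so $\|T_m(v)\| \le \|T_N\|$ after taking limits. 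Hence $\|T\|_{\cb} = \sup_m \|T_m\| \le \|T_N\|$, proving $V$ is $N$-maximal.

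For the ``only if'' direction, note that $\overline{\amconv(\mathbf{K})} \subseteq \mathbf{B}_V$ always holds since $\mathbf{B}_V$ is a closed absolutely matrix convex set containing $\mathbf{K}$, and the reverse inclusion at levels $m \le N$ is trivial since $B_{M_m(V)} = K_m$. Suppose towards contradiction that for some $m > N$ there exists $v_0 \in B_{M_m(V)} \setminus \overline{\amconv(\mathbf{K})}_m$. By Theorem \ref{thm-Hahn-Banach-matrix-convexity} there is $v' \in M_m(V')$ with $\|\mpair{v'}{v}\|_{M_{km}} \le 1$ for every $v$ at every level $k$ of $\overline{\amconv(\mathbf{K})}$, but $\|\mpair{v'}{v_0}\|_{M_{m^2}} > 1$. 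Identifying $v'$ with a linear map $T : V \to M_m$ via $T(v) = (v'_{ij}(v))_{i,j=1}^m$, the matrix pairings coincide (up to a canonical shuffle that preserves norms) with the amplifications $T_k(v)$. Since $B_{M_k(V)} \subseteq \overline{\amconv(\mathbf{K})}_k$ for all $k \le N$, we obtain $\|T_N\| \le 1$, while $\|T_m(v_0)\| > 1$ forces $\|T\|_{\cb} > \|T_N\|$, contradicting $N$-maximality.

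The main obstacle is keeping track of the identification between $M_m(V') \cong \CB(V, M_m)$ and the matrix pairing in Theorem \ref{thm-Hahn-Banach-matrix-convexity}, so that the Hahn-Banach separation hypothesis on $v'$ translates cleanly into uniform bounds on the amplifications $T_k$ of the associated map $T$. The structural description of $\amconv(\mathbf{K})$ is a simple bookkeeping exercise but is essential for the ``if'' direction to reduce everything to the interplay between direct sums and contractive multiplications.
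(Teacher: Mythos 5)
Your proof is correct and takes essentially the same approach as the paper's. The ``only if'' direction is the identical Hahn--Banach separation argument via Theorem \ref{thm-Hahn-Banach-matrix-convexity}, and your ``if'' direction simply unpacks, through the explicit description of elements of $\amconv(\mathbf{K})$ as $a(x_1\oplus\cdots\oplus x_k)b$, the paper's more abstract observation that $T\big(\overline{\amconv(\mathbf{K})}\big)\subseteq \overline{\amconv(T(\mathbf{K}))}\subseteq C\,\mathbf{B}_W$ whenever $\n{T_N}\le C$.
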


\begin{proof}
Suppose first that $\mathbf{B}_V = \overline{\amconv(\mathbf{K})}$, and let $T : V \to W$ be a linear map. Note that $\n{T_N} \le C$ means precisely that $T(\mathbf{K}) \subseteq C \mathbf{B}_W$. Since
\begin{equation}\label{eqn-lemma-N-maximal-amconv}
T(\mathbf{B}_V) = T \Big( \overline{\amconv(\mathbf{K}})  \Big) \subseteq \overline{ T\big( \amconv(\mathbf{K})\big) } = \overline{ \amconv(T(\mathbf{K}))},
\end{equation}
we have that $T(\mathbf{B}_V) \subseteq C \mathbf{B}_W$, meaning that $\n{T}_{\cb} \le C$, which shows that $V$ is $N$-maximal. 

Suppose now that $V$ is $N$-maximal. Clearly $\overline{\amconv (\mathbf{K})} \subseteq \mathbf{B}_V$. If they were different, there exists some $v_0 \in B_{M_k(V)}$ which is not in $\overline{\amconv (\mathbf{K})} $, so in particular $k > N$.
By the Hahn-Banach theorem for matrix convexity (Theorem \ref{thm-Hahn-Banach-matrix-convexity}), we can find $v' \in M_k(V')$ such that for all $1 \le m\le N$ and all $v \in K_m$,
$\n{ \mpair{v'}{v} }_{M_{mk}} \le 1$, but $\n{ \mpair{v'}{v_0} }_{M_{k^2}} > 1$.
Under the identification $M_k(V') = \CB(V,M_k)$, $v'$ then corresponds to a mapping $T \in\CB(V,M_k)$ such that $\n{T_N} \le 1$ but $\n{T}_{\cb}>1$, contradicting the fact that $V$ is $N$-maximal.
\end{proof}

The previous lemma provides a crucial relationship between an $N$-maximal operator space and the matrix set of finite height given by the first $N$ levels of its closed matrix unit ball. This allows us to provide the following  more conceptual proof of Corollary \ref{cor-compactness-in-the-bidual-for-operators} (about regularity of the ideal of operator $p$-compact mappings) based on Proposition \ref{prop-compactness-in-bidual-for-sets}.

\begin{proof}[Alternative proof of Corollary \ref{cor-compactness-in-the-bidual-for-operators}]
The inequality $\kappa_p^o(\iota_WT) \le \kappa_p^o(T)$ follows from the ideal property.
Since $V$ is $N$-maximal, by Lemma \ref{lemma-N-maximal-amconv} we have that $\mathbf{B}_V$ is the closure of the absolutely matrix convex hull of a matrix set of finite height $\mathbf{K}$.
By assumption, $\iota_W T(\mathbf{B}_V)$ is relatively operator $p$-compact. Since $
\iota_W T(\mathbf{B}_V) \supseteq  \iota_W T(\mathbf{K})$, the latter is relatively operator $p$-compact as well.
By Proposition \ref{prop-compactness-in-bidual-for-sets}, $T(\mathbf{K})$ is relatively operator $p$-compact. By \eqref{eqn-lemma-N-maximal-amconv},
Lemmas \ref{lemma-p-compact-closure} and \ref{lemma-amconv-preserves-p-compactness} imply that $T(\mathbf{B}_V)$ is relatively operator $p$-compact, meaning that $T\in\mathcal{K}_p^o(V,W)$. Moreover, all the steps above are quantitative and one gets $ \kappa_p^o(T) \le \kappa_p^o(\iota_WT)$. 
\end{proof}

\begin{remark}
There are several results throughout the paper where our arguments needed some technical conditions, e.g. the complementation in the bidual in Proposition \ref{prop:adjoint-of-p-compact-finite-dimension}, or the coexactness in Theorem \ref{thm-adjoint-of-p-summing}.
However, we do not know if these conditions are necessary.
\end{remark}

\section*{Acknowledgements}

We thank T. Oikhberg for pointing out \cite[Example 3.3.1.3]{Junge-Habilitationschrift}.

\providecommand{\bysame}{\leavevmode\hbox to3em{\hrulefill}\thinspace}
\providecommand{\MR}{\relax\ifhmode\unskip\space\fi MR }
% \MRhref is called by the amsart/book/proc definition of \MR.
\providecommand{\MRhref}[2]{%
  \href{http://www.ams.org/mathscinet-getitem?mr=#1}{#2}
}
\providecommand{\href}[2]{#2}

\end{document}